\theoremstyle{plain} 
\newtheorem{theorem}{\indent\sc Theorem}[section]
\newtheorem{lemma}[theorem]{\indent\sc Lemma}
\newtheorem{corollary}[theorem]{\indent\sc Corollary}
\newtheorem{proposition}[theorem]{\indent\sc Proposition}
\theoremstyle{definition} 
\newtheorem{definition}[theorem]{\indent\sc Definition}
\newtheorem{remark}[theorem]{\indent\sc Remark}
\newtheorem{example}[theorem]{\indent\sc Example}
\newcommand{\C}{\mathbb{C}}
\newcommand{\R}{\mathbb{R}}
\newcommand{\Q}{\mathbb{Q}}
\newcommand{\Z}{\mathbb{Z}}
\newcommand{\N}{\mathbb{N}}
\def\2{I\hspace{-.1em}I}
\title{Rodrigues formula and linear independence for values \\of hypergeometric functions with parameters vary}
\author{\textsc{Makoto Kawashima}}
\date{ }   
\begin{document}

\maketitle
 


\begin{abstract}
In this article, we prove a generalized Rodrigues formula for a wide class of holonomic Laurent
series, which yields a new linear independence criterion concerning their values at algebraic points.
This generalization yields a new construction of Padé approximations including those for Gauss
hypergeometric functions. In particular, we obtain a linear independence criterion over a number field
concerning values of Gauss hypergeometric functions, allowing \emph{the parameters of Gauss hypergeometric functions} to vary.
\end{abstract}
\textit{Key words}:~Pad\'e approximants, Rodrigues formula, linear independence, Gauss hypergeometric functions.

\section{Introduction}
We give here a linear independence criterion for values over number fields, by using Pad\'e approximation,
for a certain class of holonomic Laurent series with algebraic coefficients.

As a consequence, over a number field we show a linear independence criterion of
values of Gauss hypergeometric functions, where we let the parameters vary, which is the novel part.

The Pad\'e approximation has been appeared as one of major methods in Diophantine problems since the works of Hermite and Pad\'e \cite{Pade1, Pade2}.
To solve number theoretical program by the Pad\'e approximation, we usually need to construct a system of Pad\'e approximants in an explicit form.
Pad\'e approximants can be constructed by linear algebra with estimates using Siegel's lemma via Dirichlet's box principle. 
However, it is not {{always}} enough to establish arithmetic applications such as the linear independence criterion. Indeed, we are obliged to explicitly 
construct Pad\'e approximants to provide sufficiently sharp estimates instead. In general, it is known that this step can be performed for specific functions only.

In this article, we succeed in proving a generalized Rodrigues formula, which gives an explicit construction of Pad\'{e} approximations
for a new and wide class of holonomic Laurent series. We introduce a linear map $\varphi_f$ (see {{Equation \eqref{varphi f}}}) with respect to
a given holonomic Laurent series $f(z)$, which describes a necessary and sufficient condition to explicitly construct Pad\'e approximants 
by studying 
${\rm{ker}}\, \varphi_f$. We state necessary properties of ${\rm{ker}}\, \varphi_f$ by looking at related differential operators.

The construction of Pad\'{e} approximants for Laurent series dates back to the classical works of Legendre and Rodrigues.
In $1782$, Legendre discovered a system of orthogonal polynomials the so-called Legendre polynomials.
In $1816$, Rodrigues established a simple expression for the Legendre polynomials, called Rodrigues formula by Hermite.
See \cite{A}, where Askey described a short history of the Rodrigues formula.
It is known that Legendre polynomials provide Pad\'{e} approximants of the logarithmic function. 
After Legendre and Rodrigues, various kinds of Pad\'{e} approximants of Laurent series have been developed
by Rasala \cite{R}, Aptekarev \emph{et al.} Assche \cite{A-B-V}, Rivoal \cite{Rivoal} and Sorokin \cite{S1,S2,S3}.
We note that Alladi and Robinson \cite{A-R}, also Beukers \cite{B1,B2,B3} applied the Legendre polynomials to solve central irrationality questions, 
and many results were shown in the following papers by Rhin and Toffin \cite{R-T}, Hata \cite{H1,H2,H3} and Marcovecchio \cite{M}.
The author together with S.~David and N.~Hirata-Kohno \cite{DHK1,DHK2,DHK3,DHK4} also proved the linear independence criterion 
concerning certain specific functions in a different setting.

By trying a new approach, distinct from those in \cite{DHK4}, the author shows how to construct
new generalized Pad\'{e} approximants of Laurent series. This method allows to provide a linear independence criterion
for the Gauss hypergeometric functions, letting the parameters vary.
The case has not been considered among known results before, although Gauss hypergeometric functions is a well-known classical function.

The approach relies on the linear map ${\varphi_f}$ ({see} {{Equation \eqref{varphi f}}}) to construct the Pad\'{e} approximants
in \emph{an explicit but formal manner}. This idea has been partly used but in a different expression in \cite{DHK1,DHK2,DHK3,DHK4}, as well as in \cite{KP} by  Po\"{e}ls and the author. 

The main point in this article is that we re-describe Rodrigues formula itself from a formal point of view to find suitable differential operators that enable us to construct Pad\'{e} approximants themselves, \emph{instead of Pad\'{e}-type approximants}.
This part is done for the functions
whose Pad\'{e} approximants have never been explicitly given before.

Consequently, our corollary provides arithmetic applications, for example,
the linear independence of the concerned values at different points
for a wider class of functions, which was not achieved in \cite{A-B-V}.

In the first part of this article, we discuss an explicit construction of Pad\'{e} approximants.
Our final aim is to find a general method to explicitly obtain Pad\'{e} approximants for given Laurent series.
Here, we partly succeed in giving a solution to this fundamental question on the Rodrigues formula for specific Laurent series that
can be transformed to polynomials by the differential operator of order $1$.
Precisely speaking, we indeed generalize the Rodrigues formula to a new class of holonomic series ({see} Theorem $\ref{main general}$).

In the second part, we apply our explicit Pad\'{e} approximants of holonomic Laurent series for the linear independence problems of their values. 
As a corollary, we show below a new linear independence criterion for values of Gauss hypergeometric function, letting the parameters vary.
We recall the Gauss hypergeometric function.
For a rational number $x$ and a nonnegative integer $k$, we denote the $k$ th Pochhammer symbol: $(x)_0=1$, $(x)_k=x(x+1)\cdots (x+k-1)$.
For $a,b,c\in \Q$ that are not negative integers, we define 
$$_{2}F_1(a,b,c\,|z)=\sum_{k=0}^{\infty}\dfrac{(a)_k(b)_k}{(c)_kk!}z^{k}.$$
We can now state the following theorem.
\begin{theorem} \label{main!}
Let $u,\alpha$ be integers with $u\ge 2$ and $|\alpha| \ge 2$.
Assume $$V(\alpha):=\log |\alpha|-\log 2-\left(2-\dfrac{1}{u}\right)\left(\log u+\sum_{\substack{q:\mathrm{prime} \\ q|u}}\dfrac{\log q}{q-1}\right)-\dfrac{u-1}{\varphi(u)}>0,$$
where $\varphi$ is Euler's totient function.
Then the real numbers:
$$1,  {}_2F_1\left(\frac{1+l}{u},1, \left.\frac{u+l}{u}\,\right|\frac{1}{\alpha^u}\right) \ \ \ (0\le l \le u-2)$$
are linearly independent over $\Q$. 
\end{theorem}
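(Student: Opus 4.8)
The plan is to apply the generalized Rodrigues formula of Theorem \ref{main general}, together with the linear independence criterion that accompanies it, to the family of functions $f_l(z):={}_2F_1\!\big(\tfrac{1+l}{u},1,\tfrac{u+l}{u}\,\big|\,z\big)$ for $0\le l\le u-2$, at the rational point $\xi=\alpha^{-u}$. The first step is to check that these functions are of the type handled by that theorem. Writing $f_l(z)=\sum_{k\ge0}c_k^{(l)}z^k$ with $c_k^{(l)}=\big(\tfrac{1+l}{u}\big)_k\big/\big(\tfrac{u+l}{u}\big)_k$, the coefficients obey the first-order recursion $(ku+u+l)\,c_{k+1}^{(l)}=(ku+l+1)\,c_k^{(l)}$, equivalently $uz(1-z)f_l'(z)-\big((l+1)z-l\big)f_l(z)=l$, so $f_l$ is a holonomic Laurent series carried to a polynomial by a first-order differential operator; moreover it has the Euler-type representation $f_l(z)=\kappa_l\int_0^1 s^{l}(1-s^{u})^{-1/u}(1-s^{u}z)^{-1}\,ds$ for an explicit constant $\kappa_l$ (a ratio of $\Gamma$-values), so the $u-1$ functions share a single weight. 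Using $(1-s^{u}z)^{-1}=u^{-1}\sum_{j=0}^{u-1}\big(1-\zeta_u^{\,j}sz^{1/u}\big)^{-1}$ for a primitive $u$-th root of unity $\zeta_u$, together with the moment reduction $\int_0^1 s^{l}w(s)(1-\beta s)^{-1}ds=\beta^{-1}\big(\int_0^1 s^{l-1}w(s)(1-\beta s)^{-1}ds-\int_0^1 s^{l-1}w(s)\,ds\big)$, all of the $f_l$ are expressed through a single master function $G(\beta)=\int_0^1(1-s^{u})^{-1/u}(1-\beta s)^{-1}\,ds$ evaluated at the $u$ conjugate arguments $\beta=\zeta_u^{\,j}z^{1/u}$. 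At $z=\xi$ these are the points $\zeta_u^{\,j}\alpha^{-1}$, all of absolute value $|\alpha|^{-1}$; this is the reason only a single $\log|\alpha|$ appears in $V(\alpha)$.

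Next, apply Theorem \ref{main general} to $G$: for each $n$ it produces a polynomial $P_n$ of degree $O(n)$, a numerator $Q_n$, and a remainder $R_n(\beta)=P_n(\beta)G(\beta)-Q_n(\beta)$ vanishing to the prescribed high order at the relevant point, with the explicit integral formula $R_n(\beta)\propto\int_0^1 E_n(s)(1-s^{u})^{-1/u}(1-\beta s)^{-1}\,ds$, where $E_n$ is the explicit polynomial coming from the $n$-th power in the Rodrigues differentiation; the auxiliary family of approximants that the theorem also supplies gives the non-degeneracy of the ensuing system. Evaluating $R_n$ at the $u$ points $\zeta_u^{\,j}\alpha^{-1}$ and forming the Galois-stable $\mathbb{Q}$-rational combinations, one obtains after clearing denominators a collection of small $\mathbb{Q}$-linear forms in $1,f_0(\xi),\dots,f_{u-2}(\xi)$ to which the criterion of Theorem \ref{main general} applies.

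It remains to assemble three estimates. (i) Archimedean size of the remainder: from $|1-\beta s|\ge 1-|\beta|\ge \tfrac12$ for $|\beta|\le|\alpha|^{-1}\le\tfrac12$ and $s\in[0,1]$, together with the $\beta^{\,\mathrm{ord}}$ factor (giving $-\log|\alpha|$) and $\sup_{[0,1]}|E_n|^{1/n}\to$ an explicit limit $\le1$, one bounds $\limsup_n\tfrac1n\log\big|R_n(\zeta_u^{\,j}\alpha^{-1})\big|$ by $-\log|\alpha|+\log2$, which produces the $-\log2$. (ii) Archimedean size (house) of $P_n$, $Q_n$ and of the cleared forms, which are of the same exponential order; the passage from $\mathbb{Q}(\zeta_u)$ back to $\mathbb{Q}$ distributes the $u-1$ functions over the $\varphi(u)=[\mathbb{Q}(\zeta_u):\mathbb{Q}]$ conjugates, which is the origin of the term $\tfrac{u-1}{\varphi(u)}$. (iii) The arithmetic estimate: a common denominator $\delta_n$ for the coefficients of $P_n$ and $Q_n$, governed by the $q$-adic valuations of the Pochhammer symbols $\big(\tfrac{i}{u}\big)_k$ for $1\le i\le u$, $k\le n$. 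By Legendre's formula for $v_q(k!)$ the primes $q\nmid u$ cancel against the numerators, while each $q\mid u$ contributes the rate $v_q(u)\log q+\tfrac{\log q}{q-1}$; summing over $q\mid u$ gives $\log u+\sum_{q\mid u}\tfrac{\log q}{q-1}$, and this denominator enters through both $P_n$ and $Q_n$, up to a cancellation of relative size $1/u$ dictated by the ratio of degree to order of vanishing, which yields the factor $2-\tfrac1u$. Feeding (i)--(iii) into the linear independence criterion turns the condition for $\mathbb{Q}$-linear independence of $1,f_0(\xi),\dots,f_{u-2}(\xi)$ into exactly $V(\alpha)>0$.

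The step I expect to be the main obstacle is the arithmetic estimate (iii): one has to show that the Rodrigues differentiation introduces no denominators beyond those already present in the symbols $\big(\tfrac{i}{u}\big)_k$, carry out the $q$-adic bookkeeping — exact cancellation for $q\nmid u$, and the precise $v_q(u)+\tfrac1{q-1}$ rate for $q\mid u$ — uniformly in the index $l$, and track the constants $2-\tfrac1u$ and $\tfrac{u-1}{\varphi(u)}$ with no loss, since any slack there immediately shrinks the admissible range of $|\alpha|$. A secondary difficulty is the non-degeneracy of the constructed system at the conjugate points $\zeta_u^{\,j}\alpha^{-1}$, which cannot follow from Siegel-type counting and must be deduced from the explicit shape of $P_n$, $Q_n$ and the auxiliary approximants, typically via a Casoratian or Wronskian-type determinant.
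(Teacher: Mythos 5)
Your reduction step to a single ``master function'' $G(\beta)=\int_0^1(1-s^u)^{-1/u}(1-\beta s)^{-1}ds$ at the conjugate points $\zeta_u^{\,j}\alpha^{-1}$ contains a genuine gap. The Euler integral gives ${}_2F_1\bigl(\tfrac{1+l}{u},1,\tfrac{u+l}{u}\,\big|\,\xi\bigr)=u\kappa_l\int_0^1 s^{l}(1-s^u)^{-1/u}(1-\xi s^u)^{-1}ds$ with $\kappa_l=\Gamma(\tfrac{u+l}{u})/\bigl(\Gamma(\tfrac{1+l}{u})\Gamma(\tfrac{u-1}{u})\bigr)$, and your moment reduction replaces $s^l$ by $s^{l-1},\dots,s^0$ at the cost of the moments $m_i=\int_0^1 s^{i}(1-s^u)^{-1/u}ds$, which are Beta values. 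Only the combination $\kappa_l m_{l+uk}$ is rational (this is exactly the Pochhammer ratio $(\tfrac{1+l}{u})_k/(\tfrac{u+l}{u})_k$); the quantities $\kappa_l$ and $\kappa_l m_i$ for $i\not\equiv l\ (\mathrm{mod}\ u)$ are quotients of Gamma values that are in general transcendental (e.g.\ for $u=3$ one gets a multiple of $\Gamma(1/3)^3/\pi$). Hence the change of basis between $\{1,f_l(\xi)\}_l$ and $\{1,G(\zeta_u^{\,j}\alpha^{-1})\}_j$ is not defined over $\overline{\Q}$, and ``forming Galois-stable $\Q$-rational combinations'' of approximations to $G$ does not produce $\Q$-linear (or even $\overline{\Q}$-linear) forms in $1,f_0(\xi),\dots,f_{u-2}(\xi)$; no standard linear independence criterion applies to such forms. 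This is precisely the trap the paper avoids: it never descends to $\Q(\zeta_u)$, but constructs simultaneous Pad\'e approximants directly for the Laurent series $f_l(z)=z^{-l-1}{}_2F_1(\cdots|z^{-u})$ with the single operator $D=-(z^u-1)\partial_z-z^{u-1}$ (Theorem \ref{main general} with $d=1$, $w=u-2$, Example \ref{Chebyshev}); since the functional $\varphi_l$ kills every monomial $t^{k'}$ with $k'\not\equiv l\ (\mathrm{mod}\ u)$, only the rational Pochhammer ratios ever enter the approximants, and the Gamma factors appear only transiently inside the determinant evaluation (Lemma \ref{non vanish uN}), where they cancel.

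Two further points. First, the claimed origins of the constants are not correct: in the actual proof $\tfrac{u-1}{\varphi(u)}$ comes from the denominator growth of the ratios $(\tfrac{1+l}{u})_k/(\tfrac{u+l}{u})_k$ (Lemma \ref{valuation}, the term $\mathrm{den}(b)/\varphi(\mathrm{den}(b))=u/\varphi(u)$ over $k\le(u-1)N$), not from the degree $[\Q(\zeta_u):\Q]=\varphi(u)$ of any cyclotomic field, which never enters; and $2-\tfrac1u=(2u-1)/u$ arises from adding the two denominator sources $\mu_{uN}(1/u)$ (from the coefficients $\binom{1/u}{k}$, $\binom{uN-1/u}{k}$ of $P_{uN,h}$) and $D_N$ (from $Q_{uN,l,h}$), normalized by the weight $uN$ — not from a ``ratio of degree to order of vanishing''. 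Since any loss in these rates changes $V(\alpha)$, asserting them without the $p$-adic bookkeeping leaves the quantitative statement unproved. Second, Theorem \ref{main general} supplies no non-degeneracy by itself; the paper needs the separate determinant computation (Proposition \ref{det} together with the Beta-integral evaluation in Lemma \ref{non vanish uN}) to get invertible matrices $\mathrm{M}_N$, and only then the criterion of \cite[Proposition 5.6]{DHK2} yields Theorem \ref{special hypergeometric}, from which Theorem \ref{main!} follows by the short scaling argument. You do flag the determinant as a difficulty, but in your conjugate-point setup even the shape of the determinant to be computed is unclear, whereas the congruence structure modulo $u$ is what makes it diagonal and explicitly nonzero in the paper.
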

The following table gives suitable data for $u$ and $\alpha$ so as to ensure $V(\alpha)>0$:
$$
\begin{array}{|c| c |c |c | c|c| c| c| c| c | c| c| c| c| c|}
\hline
  u & 2 & 3 & 4 & 5 & 6 & 7 & 8 & 9 & 10 & 11 & 12 & 13 & 14 & 15\\
\hline
 \alpha \ge & e^{3.78} & e^{4.44} & e^{5.84} & e^{5.32} & e^{8.76} & e^{5.91} & e^{7.65} & e^{7.22} & e^{9.40} & e^{6.73} & e^{10.59} & e^{7.04} & e^{9.92} & e^{9.52}\\
\hline
\end{array}
$$

The present article is organized as follows. 
In Section $2$, we collect basic notions and recall the Pad\'{e}-type approximants of Laurent series.
To achieve an explicit construction of Pad\'{e} approximants, which is of particular interest, we introduce a morphism $\varphi_f$ 
associated with a Laurent series $f(z)$. 
To analyze the structure of ${\rm{ker}}\, \varphi_f$ is a crucial point for our program (see Proposition \ref{equivalence Pade}).  
Indeed, we provide a proper subspace, in some cases this is the whole space, of ${\rm{ker}}\, \varphi_f$ derived from the differential operator that annihilates $f$ (see Corollary $\ref{inc}$).
This is the key ingredient required to generalize the Rodrigues formula.

In Section $3$, we introduce the weighted Rodrigues operator, which is firstly defined in \cite{A-B-V} as well as basic properties that are going to be used in the course of the proof. 

In Section $4$, we give a generalization of the Rodrigues formula to Pad\'{e} approximants of certain holonomic series, by using the weighted Rodrigues operators (see Theorem $\ref{main general}$).
In Section $5$, we introduce the determinants associated with the Pad\'{e} approximants obtained in Theorem $\ref{main general}$. 
To prove the nonvanishing of these determinants is one of the most crucial step to obtain irrationality as well as linear independence results.  
We discuss some examples of Theorem $\ref{main general}$ and Proposition $\ref{det}$ in Section $6$.
Example $\ref{Chebyshev}$ is the particular example concerning Theorem $\ref{main!}$.  
In Section $7$, we state the more precise theorem than Theorem $\ref{main!}$ (see Theorem $\ref{special hypergeometric}$). This section is devoted to the proof of Theorem $\ref{special hypergeometric}$.
{{The Appendix is devoted  to describing a result due to Fischler and Rivoal in \cite{F-R}.
They gave a condition on the differential operator of order $1$ with polynomial coefficients so as to be a $G$-operator. 
Indeed, this result is crucial to apply Theorem $\ref{main general}$ to $G$-functions. More precisely,  whenever the operator is a $G$-operator,
then the Laurent series considered in Theorem $\ref{special hypergeometric}$
turn out to be $G$-functions.}}

\section{Pad\'{e}-type approximants of Laurent series}
Throughout this section, we fix a field $K$ of characteristic $0$.
{{We denote the formal power series ring of variable $1/z$ with coefficients $K$ by $K[[1/z]]$ and the field of fractions by $K((1/z))$. We say an element of $K((1/z))$ is a formal Laurent series.}}
We define the order function at $z=\infty$ by
$${\rm{ord}}_{\infty}:K((1/z)) \longrightarrow \Z\cup \{\infty\}; \ \sum_{k} \dfrac{a_k}{z^k} \mapsto \min\{k\in\Z\cup \{\infty\} \mid a_k\neq 0\}.$$
{{Note that, for $f\in K((1/z))$, ${\rm{ord}}_{\infty} \, f=\infty$ if and only if $f=0$.}}
We recall without proof the following elementary fact :
\begin{lemma} \label{pade}
Let $m$ be a nonnegative integer, $f_1(z),\ldots,f_m(z)\in (1/z)\cdot K[[1/z]]$ and $\boldsymbol{n}=(n_1,\ldots,n_m)\in \N^{m}$.
Put $N=\sum_{j=1}^mn_j$.
For a nonnegative integer $M$ with $M\ge N$, there exist polynomials $(P,Q_{1},\ldots,Q_m)\in K[z]^{m+1}\setminus\{\boldsymbol{0}\}$ satisfying the following conditions:
\begin{align*} 
&(i) \ {\rm{deg}}P\le M,\\
&(ii) \ {\rm{ord}}_{\infty} \left(P(z)f_j(z)-Q_j(z)\right)\ge n_j+1 \ \ \text{for} \ \ 1\le j \le m.
\end{align*}
\end{lemma}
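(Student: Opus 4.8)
The plan is to establish existence by reducing to the solvability of a homogeneous linear system over $K$ and invoking a dimension count, exactly in the spirit of Siegel's lemma / Dirichlet's box principle as mentioned in the introduction. First I would write each unknown polynomial with indeterminate coefficients: set $P(z)=\sum_{i=0}^{M}p_i z^i$ with the $p_i\in K$ to be determined, and for each $j$ let $Q_j(z)=\sum_{i=0}^{M}q_{j,i}z^i$ (we may take $\deg Q_j\le M$ since condition $(ii)$ only constrains the negative-order part, and the polynomial part of $P(z)f_j(z)$ has degree at most $M$ because $f_j\in (1/z)K[[1/z]]$). The total number of unknowns is $(M+1)+m(M+1)=(m+1)(M+1)$.

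Next I would turn condition $(ii)$ into linear equations. Write $f_j(z)=\sum_{k\ge 1} a^{(j)}_k z^{-k}$. Then $P(z)f_j(z)$ is a Laurent series whose coefficients are $K$-linear in the $p_i$; choosing $Q_j$ to be its polynomial part kills all nonnegative powers of $z$ automatically, so the constraint ${\rm ord}_\infty\!\big(P(z)f_j(z)-Q_j(z)\big)\ge n_j+1$ amounts to requiring the coefficients of $z^{-1},z^{-2},\ldots,z^{-n_j}$ in $P(z)f_j(z)$ to vanish. That is $n_j$ homogeneous linear conditions on the vector $(p_0,\ldots,p_M)$, for each $j$, hence $\sum_{j=1}^m n_j=N$ linear equations in the $M+1$ unknowns $p_i$ alone. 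Since $M\ge N$, we have $M+1>N$, so this homogeneous system has a nontrivial solution $(p_0,\ldots,p_M)\neq\boldsymbol 0$ over $K$; having fixed such a $P$, we then define $Q_j$ as the polynomial part of $P(z)f_j(z)$, which has degree $\le M$ and is uniquely determined. The resulting tuple $(P,Q_1,\ldots,Q_m)$ satisfies $(i)$ and $(ii)$, and is nonzero because $P\neq 0$.

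The only point requiring a word of care — and the closest thing to an obstacle — is the bookkeeping that $\deg\big(\text{polynomial part of }P(z)f_j(z)\big)\le M$, which is immediate from $\deg P\le M$ together with $f_j$ having no constant or positive-degree terms, and the observation that condition $(ii)$ places no restriction on that polynomial part (so we are free to absorb it into $Q_j$). There is no genuine difficulty here; the lemma is the standard existence statement for Padé-type approximants, and the proof is a pigeonhole/linear-algebra argument. I would present it in two or three lines at the level of "count unknowns $M+1\ge N+1$, count equations $N$, conclude a nontrivial kernel."
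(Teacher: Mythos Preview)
The paper does not actually prove this lemma: it introduces it with ``We recall without proof the following elementary fact,'' so there is nothing to compare against at the level of details. Your argument is correct and is precisely the standard linear-algebra/pigeonhole proof the paper has in mind (and alludes to in the introduction when it mentions that ``Pad\'{e} approximants can be constructed by linear algebra with estimates using Siegel's lemma via Dirichlet's box principle''). One cosmetic remark: since $f_j\in(1/z)K[[1/z]]$, the polynomial part of $P(z)f_j(z)$ in fact has degree at most $M-1$, not just $\le M$; and your initial introduction of separate unknowns $q_{j,i}$ is unnecessary once you observe that $Q_j$ is forced to be this polynomial part, so the whole system lives on the $M+1$ coefficients of $P$ alone --- exactly as you conclude.
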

\begin{definition}
We say that a vector of polynomials $(P,Q_{1},\ldots,Q_m) \in K[z]^{m+1}$ satisfying properties $(i)$ and $(ii)$ {{is}} a weight $\boldsymbol{n}$ and degree $M$ Pad\'{e}-type approximant of  $(f_1,\ldots,f_m)$.
For such approximants $(P,Q_{1},\ldots,Q_m)$ of $(f_1,\ldots,f_m)$, we call the formal Laurent series $(P(z)f_j(z)-Q_{j}(z))_{1\le j \le m}$, that is to say remainders, as weight $\boldsymbol{n}$ degree $M$ Pad\'{e}-type approximations of $(f_1,\ldots,f_m)$.
\end{definition}

Let $f(z)=\sum_{k=0}^{\infty} f_k/z^{k+1}\in (1/z)\cdot K[[1/z]]$.
We define a {{$K$-linear map}} $\varphi_f\in {\rm{Hom}}_K(K[t],K)$ by
\begin{align} \label{varphi f}
\varphi_f:K[t]\longrightarrow K; \ \ \ t^k\mapsto f_k \ \ \ (k\ge0) .
\end{align}

The above {{linear map}} extends naturally a $K[z]$-{{linear map}} $\varphi_f: K[z,t]\rightarrow K[z]$, and then to a $K[z][[1/z]]$-{{linear map}} $\varphi_f: K[z,t][[1/z]]\rightarrow K[z][[1/z]]$. With this notation, the formal Laurent series $f(z)$ satisfies the following crucial identities {{(see \cite[$(6.2)$ page 60 and $(5.7)$ page 52]{N-S})}}:
\begin{align*}
&f(z)=\varphi_f \left(\dfrac{1}{z-t}\right),\ \ \ P(z)f(z)-\varphi_f\left(\dfrac{P(z)-P(t)}{z-t}\right)\in (1/z)\cdot K[[1/z]] \ \ \text{for any} \ \ P(z)\in K[z].
\end{align*}
\begin{lemma} \label{equivalence Pade}
Let $m$ be a nonnegative integer, $f_1(z),\ldots,f_m(z)\in (1/z)\cdot K[[1/z]]$ and $\boldsymbol{n}=(n_1,\ldots,n_m)\in \N^m$. Let $M$ be a positive integer and $P(z)\in K[z]$ a nonzero polynomial with 
$M\ge \sum_{j=1}^mn_j$ and ${\rm{deg}}\,P\le M$. Put $Q_j(z)=\varphi_{f_j}\left((P(z)-P(t))/(z-t)\right)\in K[z]$ for $1\le j \le m$.

Then the following are equivalent.

$(i)$ The vector of polynomials $(P,Q_1,\ldots,Q_m)$ is a weight $\boldsymbol{n}$ Pad\'{e}-type approximants of $(f_1,\ldots,f_m)$.

$(ii)$ We have $t^kP(t)\in {\rm{ker}}\,\varphi_{f_j}$ for $1\le j \le m$, $0\le k \le n_j-1$.
\end{lemma}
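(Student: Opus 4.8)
The plan is to prove the equivalence of $(i)$ and $(ii)$ by exploiting the defining identities for $\varphi_{f_j}$ recorded just before the statement. First I would fix $j$ and set $R_j(z) := P(z)f_j(z) - Q_j(z)$ with $Q_j(z) = \varphi_{f_j}\big((P(z)-P(t))/(z-t)\big)$. The identity $P(z)f_j(z) - \varphi_{f_j}\big((P(z)-P(t))/(z-t)\big) \in (1/z)\cdot K[[1/z]]$ guarantees $R_j \in (1/z)\cdot K[[1/z]]$, so we may write $R_j(z) = \sum_{k\ge 0} r_{j,k}/z^{k+1}$, and condition $(ii)$ of Lemma \ref{pade} for index $j$ is exactly $\mathrm{ord}_\infty R_j \ge n_j+1$, i.e. $r_{j,0} = r_{j,1} = \cdots = r_{j,n_j-1} = 0$. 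Meanwhile condition $(i)$ of Lemma \ref{pade} (the degree bound $\deg P \le M$) is assumed in the hypothesis, and since $Q_j$ is built from $\varphi_{f_j}$ applied to a polynomial of degree $\le \deg P - 1$ in $t$, one checks $\deg Q_j \le M-1 \le M$ automatically; also $P \ne 0$ gives $(P,Q_1,\ldots,Q_m) \ne \boldsymbol{0}$. So the whole content is the translation of the vanishing of the low-order coefficients $r_{j,k}$ into the membership $t^k P(t) \in \ker \varphi_{f_j}$.

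The key computation is to identify $r_{j,k}$ explicitly. Using $f_j(z) = \varphi_{f_j}\big(1/(z-t)\big)$ and $K[z][[1/z]]$-linearity of $\varphi_{f_j}$, I would write
\begin{align*}
R_j(z) = P(z)f_j(z) - Q_j(z) = \varphi_{f_j}\!\left(\frac{P(z)}{z-t}\right) - \varphi_{f_j}\!\left(\frac{P(z)-P(t)}{z-t}\right) = \varphi_{f_j}\!\left(\frac{P(t)}{z-t}\right).
\end{align*}
Now expand $1/(z-t) = \sum_{k\ge 0} t^k/z^{k+1}$ in $K[t][[1/z]]$ and apply $\varphi_{f_j}$ term by term (valid because $\varphi_{f_j}$ is $K[z][[1/z]]$-linear), obtaining $R_j(z) = \sum_{k\ge 0} \varphi_{f_j}(t^k P(t))/z^{k+1}$. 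Hence $r_{j,k} = \varphi_{f_j}(t^k P(t))$ for all $k \ge 0$. Therefore $\mathrm{ord}_\infty R_j \ge n_j + 1$ if and only if $\varphi_{f_j}(t^k P(t)) = 0$ for $0 \le k \le n_j - 1$, i.e. if and only if $t^k P(t) \in \ker \varphi_{f_j}$ for $0 \le k \le n_j - 1$. Running this over all $j$ with $1 \le j \le m$ gives exactly the equivalence of $(i)$ and $(ii)$.

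There is no serious obstacle here; the statement is essentially a bookkeeping reformulation of the definitions, and the only point requiring a little care is the justification that one may apply $\varphi_{f_j}$ term-by-term to the infinite expansion $\sum_k t^k/z^{k+1}$ — this is exactly why the map was extended to a $K[z][[1/z]]$-linear map $\varphi_{f_j}: K[z,t][[1/z]] \to K[z][[1/z]]$ in the discussion preceding Lemma \ref{pade}, and I would invoke that extension explicitly. I would also remark, for completeness, that the hypotheses $M \ge \sum_j n_j$ and $\deg P \le M$ are what make $(P,Q_1,\ldots,Q_m)$ an admissible candidate in the sense of the Definition following Lemma \ref{pade}, so that condition $(i)$ of the present lemma is genuinely equivalent to "$(P,Q_1,\ldots,Q_m)$ satisfies both $(i)$ and $(ii)$ of Lemma \ref{pade}".
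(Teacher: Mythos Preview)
Your proof is correct and follows essentially the same approach as the paper: both compute $P(z)f_j(z)-Q_j(z)=\varphi_{f_j}\bigl(P(t)/(z-t)\bigr)=\sum_{k\ge 0}\varphi_{f_j}(t^kP(t))/z^{k+1}$ and read off the equivalence from the vanishing of the first $n_j$ coefficients. Your version is slightly more explicit about the admissibility bookkeeping (degree bounds, nonvanishing, justification of term-by-term application of $\varphi_{f_j}$), but the argument is the same.
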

\begin{proof} 
By the definition of $Q_j(z)$,
$$
P(z)f_j(z)-Q_j(z)=\varphi_{f_j}\left(\dfrac{P(t)}{z-t}\right)\in (1/z)\cdot K[[1/z]].
$$
Above equality yields that the vector of polynomials $(P,Q_1,\ldots,Q_m)$ being a weight $\boldsymbol{n}$ Pad\'{e}-type approximants of $(f_1,\ldots,f_m)$ is equivalent to the order of the Laurent series 
$$\varphi_{f_j}\left(\dfrac{P(t)}{z-t}\right)={\sum_{k=0}^{\infty}}\dfrac{\varphi_{f_j}\left(t^kP(t)\right)}{z^{k+1}}$$ being greater than or equal to $n_j+1$ for $1\le j \le m$. This shows the equivalence of items $(i)$ and $(ii)$. 
\end{proof}
Lemma \ref{equivalence Pade} indicates that it is useful to study ${\rm{ker}}\, \varphi_f$ for the explicit construction of Pad\'{e}-type approximants of Laurent series.  
We are now going to investigate ${\rm{ker}}\, \varphi_{f}$ for a holonomic Laurent series $f\in (1/z)\cdot K[[1/z]]$. 
We denote the differential operator $\tfrac{d}{dz}$ (resp. $\tfrac{d}{dt}$) by $\partial_z$ (resp. $\partial_t$). 
We describe the action of a differential operator $D$ on a function $f$ by $D\cdot f$ and denote $\partial_z \cdot f$ by $f^{\prime}$.

To begin with, let us introduce a map 
\begin{align}
\iota :K(z)[\partial_z]\longrightarrow K(t)[\partial_t]; \ \ \sum_j P_j(z)\partial^j_z \mapsto \sum_j (-1)^j \partial^j_t P_j(t). \label{iota diff}
\end{align}
{{Note, for $D\in K(z)[\partial_z]$, $\iota(D)$ is called the adjoint of $D$ and relates to the dual of differential module $K(z)[\partial_z]/K(z)[\partial_z]D$ (see \cite[III Exercises $3)$]{An1}).}}
For $D\in K(z)[\partial_z]$, we denote $\iota(D)$ by $D^{*}$. 
Notice that we have $(DE)^{*}=E^*D^{*}$ for any $D,E\in K(z)[\partial_z]$.  
\begin{lemma} \label{lemma 1}
For $D\in K[z,\partial_z]$, there exists a polynomial $P(t,z)\in K[t,z]$ satisfying $$D\cdot \dfrac{1}{z-t}=P(t,z)+D^{*}\cdot \dfrac{1}{z-t}.$$
\end{lemma}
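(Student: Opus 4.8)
The plan is to prove the statement by induction on the order of $D$ as a differential operator in $K[z,\partial_z]$, reducing everything to the single-term case $D = P_j(z)\partial_z^j$ and then, within that, to an iteration of the order-one case. The conceptual content is that $\partial_z$ and $\partial_t$ act almost identically on $1/(z-t)$: indeed $\partial_z\cdot\frac{1}{z-t} = -\frac{1}{(z-t)^2} = \partial_t\cdot\frac{1}{z-t}$, so a naive ``integration by parts'' converts $z$-derivatives into $t$-derivatives up to the sign $(-1)^j$ built into the definition \eqref{iota diff} of $\iota$, with the discrepancy terms being polynomials in $K[t,z]$. The point of the lemma is precisely that the accumulated discrepancy is polynomial, i.e. lies in $K[t,z]$ and not merely in $K(t,z)$ or in some space with poles along $z=t$.

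First I would record the base cases. For $D = P(z)$ (order $0$) the claim is trivial with $P(t,z) := 0$ is wrong in general; rather one checks $P(z)\cdot\frac{1}{z-t} - P(t)\cdot\frac{1}{z-t} = \frac{P(z)-P(t)}{z-t} \in K[t,z]$, and since $D^* = P(t)$ acting by multiplication, we may take $P(t,z) = \frac{P(z)-P(t)}{z-t}$, which is a genuine polynomial. For $D = \partial_z$ we have $\partial_z\cdot\frac{1}{z-t} = -\frac{1}{(z-t)^2}$ while $D^* = -\partial_t$ gives $-\partial_t\cdot\frac{1}{z-t} = -\frac{1}{(z-t)^2}$ as well, so here $P(t,z) = 0$ works.

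Next, the inductive step. Writing $D = \sum_j P_j(z)\partial_z^j$ and using linearity of both sides and the fact that $\iota$ is $K$-linear, it suffices to treat $D = P(z)\partial_z^j$. I would factor this as $D = P(z) \cdot \partial_z^{j}$ and build up in two stages: handle $\partial_z^j$ first by iterating the order-one computation (each application of $\partial_z$ reproduces a $\partial_t$ up to sign and contributes no new polynomial term when acting on $1/(z-t)$ directly, but care is needed because after the first differentiation we are acting on $\frac{1}{(z-t)^2}$, not on $\frac{1}{z-t}$ — so I would instead phrase the induction directly on $D$ acting on $\frac{1}{z-t}$, using $\partial_z^j\cdot\frac{1}{z-t} = \partial_t^j\cdot\frac{1}{z-t}$, which is immediate since $\frac{1}{z-t}$ depends on $z-t$ only and each derivative flips one sign, matching $(-1)^j$). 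Then multiply by $P(z)$: using the order-zero case, $P(z)\cdot g = \frac{P(z)-P(t)}{z-t}\cdot(\text{stuff})$ — more precisely, I would commute $P(z)$ past the already-transformed $\partial_t^j$-expression and use the identity $P(z) = P(t) + (z-t)R(t,z)$ with $R \in K[t,z]$, so that $P(z)\partial_t^j\cdot\frac{1}{z-t} = P(t)\partial_t^j\cdot\frac{1}{z-t} + (z-t)R(t,z)\partial_t^j\cdot\frac{1}{z-t}$; the second summand is a polynomial because $\partial_t^j\cdot\frac{1}{z-t}$ has a pole of order $j+1$ along $z=t$ killed down to order $j$ times a polynomial — here one must also move the remaining $z$-power factor, and then push $P(t)$ to the left of $\partial_t^j$ via the Leibniz rule, generating lower-order terms $\binom{j}{i}P^{(i)}(t)\partial_t^{j-i}$ to which the same analysis (or the inductive hypothesis on operators of smaller order in $\partial_t$, equivalently a secondary induction) applies, and the top term reassembles into exactly $\partial_t^j P(t)\cdot\frac{1}{z-t}$, i.e. $(-1)^j$ times $D^*\cdot\frac{1}{z-t}$ after accounting for the sign convention.

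The main obstacle I anticipate is bookkeeping: making sure that every ``discrepancy'' term genuinely lands in $K[t,z]$ rather than in the localization at $(z-t)$, which requires tracking pole orders carefully through the Leibniz expansion, and making sure the signs from $\iota$ line up with the signs from repeated differentiation. A clean way to sidestep much of this is to observe that $K[z,\partial_z]$ is generated as a $K$-algebra by $z$ and $\partial_z$, to define the set $\mathcal{S}$ of operators $D$ for which the lemma holds, and to show $\mathcal{S}$ is closed under left multiplication by $z$ and by $\partial_z$ (using the two base computations above together with the Leibniz-type identity $zD - Dz$ and $\partial_z D - D\partial_z$ relations), hence $\mathcal{S} = K[z,\partial_z]$; the closure under $\partial_z$ is essentially free from the $\partial_z^j = \partial_t^j$ observation, and closure under multiplication by $z$ is where the polynomial $\frac{P(z)-P(t)}{z-t}$ enters. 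I expect the closure-under-$z$ step to be the only genuinely delicate point, and it is entirely elementary.
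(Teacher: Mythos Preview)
Your closure argument at the end is correct and clean: starting from $D=1$, closure of $\mathcal{S}$ under left multiplication by $\partial_z$ follows because $\partial_z$ commutes with $D^*$ (different variables) and $\partial_z\cdot\tfrac{1}{z-t}=-\partial_t\cdot\tfrac{1}{z-t}$, while closure under left multiplication by $z$ follows from $z\cdot(D^*\cdot\tfrac{1}{z-t})=D^*\cdot\tfrac{z}{z-t}=D^*\cdot 1+D^*\cdot\tfrac{t}{z-t}$ with $D^*\cdot 1\in K[t]$ and $D^* t=(zD)^*$. This is a genuinely different route from the paper, which instead reduces by linearity to $D=z^m\partial_z^n$ and performs a direct power-series computation: it expands $\tfrac{(-1)^n n!\,z^m}{(z-t)^{n+1}}$ in powers of $1/z$, identifies the polynomial part explicitly, and checks that the tail matches $D^*\cdot\tfrac{1}{z-t}$ term by term. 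Your inductive argument is arguably more conceptual and avoids any series manipulation; the paper's argument has the virtue of producing an explicit formula for $P(t,z)$.

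That said, your \emph{first} attempt (the Leibniz-based one) contains a genuine error. After writing $P(z)=P(t)+(z-t)R(t,z)$ you claim that $(z-t)R(t,z)\,\partial_t^j\cdot\tfrac{1}{z-t}$ is a polynomial; it is not, since $\partial_t^j\cdot\tfrac{1}{z-t}=\tfrac{j!}{(z-t)^{j+1}}$ and multiplying by $(z-t)$ still leaves a pole of order $j$. The easy fix is to reverse the order of operations: since $P(z)$ commutes with $\partial_t^j$, write
\[
P(z)\,\partial_z^j\cdot\tfrac{1}{z-t}
=(-1)^j\partial_t^j\cdot\tfrac{P(z)}{z-t}
=(-1)^j\partial_t^j\cdot\Bigl(\tfrac{P(z)-P(t)}{z-t}\Bigr)+(-1)^j\partial_t^j P(t)\cdot\tfrac{1}{z-t},
\]
and now the first summand is $\partial_t^j$ applied to a polynomial in $K[t,z]$, hence polynomial, while the second summand is exactly $D^*\cdot\tfrac{1}{z-t}$. (Note also a sign slip: $\partial_z^j\cdot\tfrac{1}{z-t}=(-1)^j\partial_t^j\cdot\tfrac{1}{z-t}$, not equality, though this is precisely the $(-1)^j$ in $D^*$, so the conclusion is unaffected.) With this correction, both of your approaches work.
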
 
\begin{proof}
Let $m,n$ be nonnegative integers. It suffices to prove the case $D=z^m\partial^n_z$. 
Then,
\begin{align} \label{cal 1}
D \cdot \dfrac{1}{z-t}&=\dfrac{(-1)^nn!z^m}{(z-t)^{n+1}}=(-1)^n\sum_{k=0}^{\infty}\dfrac{(n+k)!}{k!}\dfrac{t^k}{z^{k+1+n-m}}.
\end{align}
We define a polynomial $P(t,z)$ by $0$ if $m\le n$ and 
$$P(t,z)=
(-1)^n {\displaystyle\sum_{k=0}^{m-n-1}}\dfrac{(n+k)!}{k!}t^kz^{m-n-k-1}
$$
for $m>n$. 
Equation $(\ref{cal 1})$ implies   
\begin{align*}
D \cdot \dfrac{1}{z-t}-P(t,z)&=(-1)^n\sum_{k=\max\{m-n,0\}}^{\infty}\dfrac{(n+k)!}{k!}\dfrac{t^k}{z^{k+1+n-m}}\\
                           &=(-1)^n\sum_{k=0}^{\infty}(k+1+m-n)\cdots(m+k)\dfrac{t^{k+m-n}}{z^{k+1}}.
\end{align*}
However,
\begin{align*} 
D^*\cdot \dfrac{1}{z-t}&=(-1)^n\partial^n_t \cdot\dfrac{t^m}{z-t}=(-1)^n\sum_{k=0}^{\infty}\partial^n_t\cdot \dfrac{t^{m+k}}{z^{k+1}}\\
&=(-1)^n\sum_{k=0}^{\infty}(k+1+m-n)\cdots(m+k)\dfrac{t^{k+m-n}}{z^{k+1}}.
\end{align*}
The above equalities yield $$D\cdot \dfrac{1}{z-t}-P(t,z)=D^*\cdot \dfrac{1}{z-t}.$$ This completes the proof of Lemma $\ref{lemma 1}$.
\end{proof}
We introduce the projection morphism $\pi$ by
\begin{align*} 
\pi :K[z][[1/z]]\longrightarrow K[z][[1/z]]/K[z]\cong (1/z)\cdot K[[1/z]]; \ \ \ f(z)=P(z)+\tilde{f}(z)\mapsto \tilde{f}(z), 
\end{align*}
where $P(z)\in K[z]$ and $\tilde{f}(z)\in (1/z)\cdot K[[1/z]]$.
Lemma $\ref{lemma 1}$ allows us to show the following key proposition.  
\begin{proposition} \label{key prop}
Let $D\in K[z,\partial_z]$ and $f(z)\in (1/z)\cdot K[[1/z]]$. We have $\varphi_{\pi(D\cdot f)}=\varphi_f\circ D^{*}$.
\end{proposition}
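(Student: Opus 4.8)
The plan is to reduce everything to the single–variable generating identity $f(z)=\varphi_f(1/(z-t))$ together with the commutation relation proved in Lemma~\ref{lemma 1}. By $K[z]$-linearity of $\varphi_f$ and the fact that both sides of the claimed equality $\varphi_{\pi(D\cdot f)}=\varphi_f\circ D^*$ are $K$-linear maps $K[t]\to K$, it suffices to test them against an arbitrary monomial $t^k$, or equivalently to compare the two generating series $\sum_{k\ge 0}\varphi_{\pi(D\cdot f)}(t^k)/z^{k+1}$ and $\sum_{k\ge 0}(\varphi_f\circ D^*)(t^k)/z^{k+1}$ in $(1/z)\cdot K[[1/z]]$. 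The first series is by definition $\pi(D\cdot f)$ itself, viewed through the isomorphism $K[z][[1/z]]/K[z]\cong (1/z)\cdot K[[1/z]]$; the second series is $\varphi_f(D^*\cdot\tfrac{1}{z-t})$, since $\varphi_f$ applied to $D^*\cdot(1/(z-t))=\sum_k (D^*\cdot t^k)/z^{k+1}$ produces exactly $\sum_k \varphi_f(D^*\cdot t^k)/z^{k+1}$ by the $K[z][[1/z]]$-linear extension of $\varphi_f$.

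First I would write $f(z)=\varphi_f\bigl(\tfrac{1}{z-t}\bigr)$ and apply $D\in K[z,\partial_z]$; since $D$ acts on the $z$-variable and $\varphi_f$ is $K[z][[1/z]]$-linear (and commutes with $\partial_z$ because the coefficients $f_k$ are constants), we get $D\cdot f(z)=\varphi_f\bigl(D\cdot\tfrac{1}{z-t}\bigr)$. Next I would invoke Lemma~\ref{lemma 1}: there is a polynomial $P(t,z)\in K[t,z]$ with $D\cdot\tfrac{1}{z-t}=P(t,z)+D^*\cdot\tfrac{1}{z-t}$. Applying $\varphi_f$ gives $D\cdot f(z)=\varphi_f\bigl(P(t,z)\bigr)+\varphi_f\bigl(D^*\cdot\tfrac{1}{z-t}\bigr)$, where the first term lies in $K[z]$ (it is $\varphi_f$ applied to a polynomial in $t$ with coefficients in $K[z]$) and, crucially, the second term lies in $(1/z)\cdot K[[1/z]]$. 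This last point needs a short justification: $D^*\cdot\tfrac{1}{z-t}=\sum_{k\ge0}(\partial_t^{\,(\cdot)}\text{-combination of }t^k)/z^{k+1}$ has no polynomial part in $z$, because differentiation in $t$ and the adjoint rearrangement in Lemma~\ref{lemma 1} only shift the $t$-powers, never producing negative powers of $z$; concretely one reads this off the explicit expansion computed in the proof of Lemma~\ref{lemma 1}. Hence the decomposition $D\cdot f(z)=\varphi_f(P(t,z))+\varphi_f\bigl(D^*\cdot\tfrac{1}{z-t}\bigr)$ is precisely the splitting into polynomial part plus $(1/z)K[[1/z]]$-part, so $\pi(D\cdot f)=\varphi_f\bigl(D^*\cdot\tfrac{1}{z-t}\bigr)$.

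Finally I would extract coefficients: expand $D^*\cdot\tfrac{1}{z-t}=\sum_{k\ge0}(D^*\cdot t^k)\big/z^{k+1}$ — this is legitimate because $D^*\in K(t)[\partial_t]$ here actually lies in $K[t,\partial_t]$ (the adjoint of a polynomial-coefficient operator is again one) and acts termwise on the series $\tfrac{1}{z-t}=\sum_{k\ge0}t^k/z^{k+1}$. Applying the $K[z][[1/z]]$-linear map $\varphi_f$ termwise yields $\pi(D\cdot f)=\sum_{k\ge0}\varphi_f(D^*\cdot t^k)\big/z^{k+1}$. Comparing with $\pi(D\cdot f)=\sum_{k\ge0}\varphi_{\pi(D\cdot f)}(t^k)\big/z^{k+1}$ and matching coefficients of $z^{-(k+1)}$ gives $\varphi_{\pi(D\cdot f)}(t^k)=\varphi_f(D^*\cdot t^k)=(\varphi_f\circ D^*)(t^k)$ for every $k\ge0$, which is the asserted identity of linear maps on $K[t]$.

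The main obstacle is bookkeeping the linearity correctly: one must check that $\varphi_f$ genuinely extends to a $K[z][[1/z]]$-linear map commuting with the relevant operations (in particular with $\partial_z$ and with infinite $K[z]$-linear combinations that converge in the $(1/z)$-adic topology), and that the term "$\varphi_f(P(t,z))$" really does land in $K[z]$ while "$\varphi_f\bigl(D^*\cdot\tfrac{1}{z-t}\bigr)$" really does land in $(1/z)K[[1/z]]$ — so that $\pi$ kills exactly the first and fixes the second. Once this is pinned down the rest is a formal coefficient comparison. I do not expect any genuine analytic or combinatorial difficulty beyond Lemma~\ref{lemma 1}, which already encapsulates the one nontrivial computation.
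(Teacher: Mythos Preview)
Your proof is correct and follows essentially the same approach as the paper: write $D\cdot f=\varphi_f\bigl(D\cdot\tfrac{1}{z-t}\bigr)$, apply Lemma~\ref{lemma 1} to split this into a polynomial part $\varphi_f(P(t,z))\in K[z]$ plus the series $\varphi_f\bigl(D^*\cdot\tfrac{1}{z-t}\bigr)=\sum_{k\ge0}\varphi_f(D^*\cdot t^k)/z^{k+1}\in(1/z)K[[1/z]]$, and then read off $\varphi_{\pi(D\cdot f)}(t^k)=\varphi_f(D^*\cdot t^k)$ by coefficient comparison. Your additional remarks on why the two pieces land in the correct summands and why $\varphi_f$ commutes with the relevant operations are exactly the bookkeeping the paper takes for granted.
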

\begin{proof}
{{First, since $\varphi_f$ acts only on the parameter $t$, 
$$D\cdot f=D\circ \varphi_f\left(\dfrac{1}{z-t}\right)=\varphi_f \left(D\cdot \dfrac{1}{z-t}\right).$$}}
Lemma \ref{lemma 1} implies that there exists a polynomial $P(z)$ with 
$$D\cdot f=P(z)+\varphi_f\left(D^{*}\cdot \dfrac{1}{z-t} \right)=P(z)+\sum_{k=0}^{\infty}\dfrac{\varphi_f(D^{*}\cdot t^k)}{z^{k+1}}.$$
{{Note that $P(z)=\varphi_f(P(t,z))$ where $P(t,z)\in K[t,z]$ is defined in Lemma $\ref{lemma 1}$.}}
This shows that $\pi(D\cdot f)=\sum_{k=0}^{\infty}\varphi_f(D^{*}\cdot t^k)/{z^{k+1}}$ and therefore
$$\varphi_{\pi(D\cdot f)}(t^k)=\varphi_f\circ D^{*}(t^k) \ \ \ \text{for all} \ \ \ k\ge 0.$$
This concludes the proof of Proposition \ref{key prop}.
\end{proof} 
As a corollary of Proposition \ref{key prop}, the following crucial equivalence relations hold.
\begin{corollary} \label{inc} 
Let $f(z)\in (1/z)\cdot K[[1/z]]$ and $D\in K[z,\partial_z]$.

The following are equivalent:

$(i)$ $D\cdot f\in K[z]$;

$(ii)$ $D^{*}(K[t])\subseteq {\rm{ker}}\,\varphi_f$.
\end{corollary}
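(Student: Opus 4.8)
The plan is to derive Corollary~\ref{inc} as a direct consequence of Proposition~\ref{key prop} together with the elementary observation that a formal Laurent series in $(1/z)\cdot K[[1/z]]$ vanishes if and only if its associated functional $\varphi_{(\cdot)}$ is identically zero on $K[t]$. Indeed, for $g(z)=\sum_{k\ge 0}g_k/z^{k+1}\in (1/z)\cdot K[[1/z]]$ we have $g=0$ if and only if $g_k=0$ for all $k\ge 0$, which by the definition \eqref{varphi f} of $\varphi_g$ is equivalent to $\varphi_g(t^k)=0$ for all $k\ge 0$, i.e. to $\varphi_g=0$ as an element of $\mathrm{Hom}_K(K[t],K)$ (since the monomials $t^k$ span $K[t]$).

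First I would unwind condition $(i)$. Write $D\cdot f = P(z) + \tilde f(z)$ with $P(z)\in K[z]$ and $\tilde f(z)=\pi(D\cdot f)\in (1/z)\cdot K[[1/z]]$; this decomposition is exactly the one used to define $\pi$. Then $D\cdot f\in K[z]$ holds precisely when $\tilde f = \pi(D\cdot f)=0$. By the vanishing criterion recalled above, this is equivalent to $\varphi_{\pi(D\cdot f)}=0$ on $K[t]$. Next I would invoke Proposition~\ref{key prop}, which gives $\varphi_{\pi(D\cdot f)}=\varphi_f\circ D^{*}$; hence $(i)$ is equivalent to $\varphi_f\circ D^{*}=0$ as a map $K[t]\to K$, that is, $\varphi_f\bigl(D^{*}(Q(t))\bigr)=0$ for every $Q\in K[t]$. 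But $\varphi_f(D^{*}(Q))=0$ for all $Q\in K[t]$ says exactly that $D^{*}(K[t])\subseteq {\rm{ker}}\,\varphi_f$, which is condition $(ii)$. This chain of equivalences proves the corollary.

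I would present this in two short steps: (a) state and justify the ``vanishing'' lemma $\bigl(g=0 \iff \varphi_g=0\bigr)$ for $g\in(1/z)\cdot K[[1/z]]$, noting it is immediate from the definition of $\varphi_g$ and the fact that $\{t^k\}_{k\ge 0}$ is a $K$-basis of $K[t]$; (b) combine $D\cdot f\in K[z]\iff \pi(D\cdot f)=0\iff \varphi_{\pi(D\cdot f)}=0\iff \varphi_f\circ D^{*}=0\iff D^{*}(K[t])\subseteq{\rm ker}\,\varphi_f$, citing Proposition~\ref{key prop} for the middle identification.

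There is no real obstacle here: the corollary is purely formal once Proposition~\ref{key prop} is in hand, and the only point requiring a word of care is making explicit that $\varphi_f\circ D^{*}$ being the zero map is literally the containment $D^{*}(K[t])\subseteq{\rm ker}\,\varphi_f$ --- since $\varphi_f\circ D^{*}$ is defined on all of $K[t]$ and $D^{*}$ maps $K[t]$ into $K[t]$ (as $D\in K[z,\partial_z]$ forces $D^{*}\in K[t,\partial_t]$ by the definition \eqref{iota diff} of $\iota$), annihilation by $\varphi_f\circ D^{*}$ on the basis $\{t^k\}$ is the same as annihilation on all of $K[t]$, hence the same as the image landing in ${\rm ker}\,\varphi_f$. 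If anything needs emphasis in the write-up, it is simply that one must check $D^{*}$ preserves $K[t]$ so that the composition $\varphi_f\circ D^{*}$ makes sense on $K[t]$; this is immediate from $\iota$ sending $K(z)[\partial_z]$-elements with polynomial coefficients to $K(t)[\partial_t]$-elements with polynomial coefficients.
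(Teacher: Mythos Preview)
Your proposal is correct and follows essentially the same route as the paper's own proof: reformulate $(i)$ as $\pi(D\cdot f)=0$ and $(ii)$ as $\varphi_f\circ D^{*}=0$, then invoke Proposition~\ref{key prop}. You simply spell out in more detail the two equivalences the paper states without justification (the vanishing criterion for $\varphi_g$ and the fact that $D^{*}$ preserves $K[t]$), which is fine.
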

\begin{proof} 
Conditions $(i)$ and $(ii)$ are equivalent to $\pi(D\cdot f)=0$ and $\varphi_f \circ D^{*}=0$, respectively. Therefore by Proposition \ref{key prop}, we obtain the assertion. 
\end{proof}

\section{Weighted Rodrigues operators}
Let $K$ be a field of characteristic $0$.
Let us introduce the weighted Rodrigues {{operator}}, which is firstly defined by Aptekarev, Branquinho and Van Assche in \cite{A-B-V}.
\begin{definition} $($See \cite[Equation $(2.5)$]{A-B-V}$)$ 
Let $l\in \N$, $a_1(z),\ldots,a_l(z)\in K[z]\setminus\{0\}$, $b(z)\in K[z]$. Put $a(z)=a_1(z)\cdots a_l(z)$, $D=-a(z)\partial_z+b(z)$. 
For $n\in \N$ and a weight $\vec{r}=(r_1,\ldots,r_l)\in \Z^l$ with $r_i\ge0$, we define the weighted Rodrigues operator associated with $D$ by
$$R_{D,n,\vec{r}}=\dfrac{1}{n!}\left(\partial_z+\dfrac{b(z)}{a(z)}\right)^na(z)^n\prod_{v=1}^la_v(z)^{-r_v}\in K(z)[\partial_z].$$
In the case of $\vec{r}=(0,\ldots,0)$, we denote $R_{D,n,\vec{r}}=R_{D,n}$ and call this operator the $n^{{\rm{th}}}$ Rodrigues operator associated with $D$.

We denote the generalized {{Rodrigues}} operator associated with $D$ with respect to the parameter $t$ by 
$$\mathcal{R}_{D,n,\vec{r}}=\dfrac{1}{n!}\left(\partial_t+\dfrac{b(t)}{a(t)}\right)^na(t)^n\prod_{v=1}^la_v(t)^{-r_v} \in K(t)[\partial_t],$$
and $\mathcal{R}_{D,n,\vec{r}}=\mathcal{R}_{D,n}$ in the case of $\vec{r}=(0,\ldots,0)$.
\end{definition}
Let us show some basic properties of the weighted Rodrigues operator in order to obtain a generalization of Rodrigues formula of Pad\'{e} approximants of holonomic Laurent series.
In the following, for $a(z)\in K[z]$ (resp. $a(t)\in K[t]$), we denote the ideal of $K[z]$ (respectively $K[t]$), generated by $a(z)$ (respectively $a(t)$) by $(a(z))$ (respectively $(a(t))$).
\begin{proposition} \label{key 1 2}
Let $a(t),b(t)\in K[t]$ with $a(t)\neq 0$. Put $\mathcal{E}_{a,b}=\partial_t+b(t)/a(t)\in K(t)[\partial_t]$.

$(i)$ Let $n,k$ be nonnegative integers. Then there exists integers $(c_{n,k,l})_{0\le l  \le \min\{n,k\}}$ with $$c_{n,k,\min\{n,k\}}=(-1)^nk(k-1)\cdots (k-n+1),$$
\begin{align} \label{im eq}
t^k\mathcal{E}^n_{a,b}=\sum_{l=0}^{\min\{n,k\}}c_{n,k,l}\mathcal{E}^{n-l}_{a,b}t^{k-l}\in K(t)[\partial_t].
\end{align}

$(ii)$ Assume there exist polynomials $a_1(t),\ldots,a_l(t)\in K[t]$ with $a(t)=a_1(t)\cdots a_l(t)$. 
For an $l$-tuple of nonnegative integers $\boldsymbol{s}:=(s_1,\ldots,s_l)$, we denote by $\mathrm{I}(\boldsymbol{s})$ the ideal of $K[t]$ generated by ${\displaystyle\prod_{v=1}^l}a_v(t)^{s_v}$. Then for $n\ge 1$ and $F(t)\in \mathrm{I}(\boldsymbol{s})$, 
\begin{align} \label{contain ideal}
\mathcal{E}^n_{a,b}a(t)^n\cdot F(t)\in \mathrm{I}(\boldsymbol{s}).
\end{align}
\end{proposition}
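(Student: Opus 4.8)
The plan is to prove the two statements essentially by induction on $n$, treating (i) first and then deducing (ii) with the help of (i).

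For part (i), first I would record the fundamental commutation rule between $t$ (multiplication) and $\mathcal{E}_{a,b}=\partial_t+b(t)/a(t)$. Since $\partial_t\cdot t = t\cdot\partial_t + 1$ as operators, and multiplication by $b(t)/a(t)$ commutes with multiplication by $t$, we get $\mathcal{E}_{a,b}\,t = t\,\mathcal{E}_{a,b}+1$, equivalently $t\,\mathcal{E}_{a,b}=\mathcal{E}_{a,b}\,t-1$. Then I would prove the claimed identity $t^k\mathcal{E}^n_{a,b}=\sum_{l=0}^{\min\{n,k\}}c_{n,k,l}\mathcal{E}^{n-l}_{a,b}t^{k-l}$ by induction on $n$ (for all $k$ simultaneously). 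The base case $n=0$ is trivial with $c_{k,0,0}=1$. For the inductive step I would write $t^k\mathcal{E}^{n+1}_{a,b}=(t^k\mathcal{E}^n_{a,b})\mathcal{E}_{a,b}$, apply the inductive hypothesis, and then push each trailing $t^{k-l}$ past the single remaining $\mathcal{E}_{a,b}$ using $t^{j}\mathcal{E}_{a,b} = \mathcal{E}_{a,b}\,t^{j} - j\,t^{j-1}$ (itself an easy consequence of the commutation rule, provable by a sub-induction on $j$). Collecting terms gives a recursion $c_{n+1,k,l}=c_{n,k,l}-(k-l+1)\,c_{n,k,l-1}$ for the coefficients, which visibly keeps them integers, and tracking the top coefficient $l=\min\{n,k\}$ through this recursion yields $c_{n,k,\min\{n,k\}}=(-1)^n k(k-1)\cdots(k-n+1)$ (the product being $0$ once $n>k$, consistent with the formula). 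I would need to be slightly careful about the two regimes $k\ge n$ and $k<n$ when identifying which index is the top one, but the recursion handles both uniformly.

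For part (ii), I would again induct on $n$. The key observation for the base case $n=1$: for $F(t)\in \mathrm{I}(\boldsymbol{s})$, write $F = \big(\prod_v a_v^{s_v}\big)G$, so $a\cdot F = \big(\prod_v a_v^{s_v+1}\big)G$ and
\[
\mathcal{E}_{a,b}\,a(t)\cdot F = \partial_t\!\big(a(t)F(t)\big) + b(t)F(t).
\]
The term $b(t)F(t)$ lies in $\mathrm{I}(\boldsymbol{s})$ since $F$ does. For $\partial_t(aF)$, applying the product rule to $aF=\big(\prod_v a_v^{s_v+1}\big)G$ shows every term of the derivative still carries $\prod_v a_v^{s_v}$ as a factor (each time we differentiate $a_v^{s_v+1}$ we lose only one power of $a_v$, leaving $a_v^{s_v}$, and all other factors $a_w^{s_w+1}$ are untouched), so $\partial_t(aF)\in\mathrm{I}(\boldsymbol{s})$. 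Hence $\mathcal{E}_{a,b}\,a\cdot F\in\mathrm{I}(\boldsymbol{s})$. For the inductive step from $n$ to $n+1$, I would write
\[
\mathcal{E}^{n+1}_{a,b}\,a(t)^{n+1}\cdot F = \mathcal{E}_{a,b}\,a(t)\cdot\Big(\mathcal{E}^{n}_{a,b}\,a(t)^{n}\cdot\big(a(t)F(t)\big)\Big),
\]
after commuting one factor $a(t)$ to the front — here one must move an $a(t)$ past $\mathcal{E}^n_{a,b}$, which is not free, so instead I would argue more directly: note $a^{n+1}F\in \mathrm{I}(\boldsymbol{s}+n\boldsymbol{1}+\boldsymbol{1})$ where $\boldsymbol{1}=(1,\dots,1)$, apply the inductive hypothesis with the shifted multi-index $\boldsymbol{s}+\boldsymbol{1}$ to conclude $\mathcal{E}^n_{a,b}\,a^n\cdot(aF)\in\mathrm{I}(\boldsymbol{s}+\boldsymbol{1})$, and then apply the $n=1$ case to this element of $\mathrm{I}(\boldsymbol{s}+\boldsymbol{1})\subseteq\mathrm{I}(\boldsymbol{s})$... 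I need to recheck indices, but the clean formulation is: prove by induction on $n$ the statement ``for every multi-index $\boldsymbol{s}$ and every $F\in\mathrm{I}(\boldsymbol{s})$, $\mathcal{E}^n_{a,b}a^n\cdot F\in\mathrm{I}(\boldsymbol{s})$'', peeling off one $\mathcal{E}_{a,b}a$ at a time and using the $n=1$ case together with the inductive hypothesis applied to $\mathcal{E}_{a,b}a\cdot F\in\mathrm{I}(\boldsymbol{s})$.

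The main obstacle I anticipate is bookkeeping rather than conceptual depth: in part (i), correctly tracking the coefficient recursion across the two ranges $\min\{n,k\}=n$ versus $=k$ and verifying the top-coefficient formula, and in part (ii), the fact that $a(t)$ does not commute with $\mathcal{E}_{a,b}$, so the reduction must be phrased so that each application of $\mathcal{E}_{a,b}$ is immediately preceded by one factor of $a(t)$ acting on something already known to lie in the appropriate ideal. Once the $n=1$ case is isolated cleanly, the induction for (ii) is routine. One should also double-check the degenerate cases (some $a_v$ constant, or $b\equiv 0$) but these impose no difficulty.
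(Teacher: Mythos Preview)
Your approach is essentially the paper's: for (i), induction on $n$ via the commutation identity $t^j\mathcal{E}_{a,b}=\mathcal{E}_{a,b}t^j-jt^{j-1}$ (the paper peels $\mathcal{E}_{a,b}$ from the left rather than the right, but the recursion is the same); for (ii), the $n=1$ case via the Leibniz rule, then the induction step by writing $\mathcal{E}^{n+1}_{a,b}a^{n+1}\cdot F=\mathcal{E}_{a,b}\cdot\bigl(\mathcal{E}^n_{a,b}a^n\cdot(aF)\bigr)$, noting $aF\in\mathrm{I}(\boldsymbol{s}+\boldsymbol{1})$, applying the induction hypothesis to land in $\mathrm{I}(\boldsymbol{s}+\boldsymbol{1})$, rewriting that as $a\tilde F$ with $\tilde F\in\mathrm{I}(\boldsymbol{s})$, and invoking the $n=1$ case --- exactly what the paper does.

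One caution: your final ``clean formulation'' in (ii), peeling off $\mathcal{E}_{a,b}a$ as a block and applying the inductive hypothesis to $\mathcal{E}_{a,b}a\cdot F$, would require $\mathcal{E}^{n+1}_{a,b}a^{n+1}=\mathcal{E}^{n}_{a,b}a^{n}\cdot\mathcal{E}_{a,b}a$, which is false since $a$ and $\mathcal{E}_{a,b}$ do not commute. Stick with your earlier (correct) decomposition, which pulls $\mathcal{E}_{a,b}$ from the left and $a$ from the right; also note that (ii) does not actually need (i).
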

\begin{proof}

$(i)$ We prove the assertion by induction on $(n,k)\in \Z^2$ with $n,k\ge0$. 
{{In the case of $n=0$ and any $k\ge 0$, the statement is trivial.}}
Let $n,k$ be nonnegative integers with $n\ge 1$ or $k\ge 1$. 
We assume that the assertion holds for any elements of the set
$(\tilde{n},\tilde{k})\in \{(\tilde{n},\tilde{k})\in \Z^2\mid 0\le \tilde{n},\tilde{k} \ \ \text{and} \ \  \tilde{n}< n \ \text{and} \ \tilde{k}\le k\}$. 
The equality $t^k\mathcal{E}_{a,b}=\mathcal{E}_{a,b}t^k-kt^{k-1}$ in $K[t,\partial_t]$ implies that we have
\begin{align}
t^k\mathcal{E}^n_{a,b}&=(\mathcal{E}_{a,b}t^k-kt^{k-1})\mathcal{E}^{n-1}_{a,b} \nonumber \\ 
                             &=\mathcal{E}_{a,b}\sum_{l=0}^{\min\{n-1,k\}}c_{n-1,k,l}\mathcal{E}^{n-1-l}_{a,b}t^{k-l}-k\sum_{l=0}^{\min\{n-1,k-1\}}c_{n,k-1,l}\mathcal{E}^{n-1-l}_{a,b}t^{k-1-l} \label{second eq a}\\
&=\sum_{l=0}^{\min\{n-1,k\}}c_{n-1,k,l}\mathcal{E}^{n-l}_{a,b}t^{k-l}-\sum_{l=0}^{\min\{n-1,k-1\}}kc_{n-1,k-1,l}\mathcal{E}^{n-1-l}_{a,b}t^{k-1-l}. \nonumber
\end{align}
Note that we use the induction hypothesis in line \eqref{second eq a}. This concludes the assertion for $(n,k)$.

$(ii)$ Let us prove the statement by induction on $n$. In the case of $n=1$, since 
\begin{align*}
\mathcal{E}_{a,b}a(t)\cdot F(t)&=(\partial_t a(t)+b(t))\cdot F(t)
                                        =a^{\prime}(t)F(t)+a(t)F^{\prime}(t)+b(t)F(t),
\end{align*}
using the Leibniz formula, we obtain Equation $(\ref{contain ideal})$. We assume Equation $(\ref{contain ideal})$ holds for $n\ge 1$. 
In the case of $n+1$, 
\begin{align} \label{ind n+1} 
\mathcal{E}^{n+1}_{a,b}a(t)^{n+1}\cdot F(t)&=\mathcal{E}_{a,b}\mathcal{E}^n_{a,b}a(t)^n\cdot a(t)F(t).
\end{align}
Note that we have $a(t)F(t)\in \mathrm{I}(\boldsymbol{s}+\boldsymbol{1})$ where $\boldsymbol{s}+\boldsymbol{1}:=(s_1+1,\ldots,s_d+1)\in \N^d$. 
Relying on the induction hypothesis, we deduce 
$\mathcal{E}^n_{a,b}a(t)^n\cdot a(t)F(t)\in {\mathrm{I}}(\boldsymbol{s}+\boldsymbol{1})$. Thus, there exists a polynomial $\tilde{F}(t)\in {\mathrm{I}}(\boldsymbol{s})$ with $\mathcal{E}^n_{a,b}a(t)^n\cdot a(t)F(t)=a(t)\tilde{F}(t)$.
Substituting this equality into Equation $(\ref{ind n+1})$, by using a similar argument to the case of $n=1$, we conclude $\mathcal{E}^{n+1}_{a,b}a(t)^{n+1}\cdot F(t)\in {\mathrm{I}}(\boldsymbol{s})$.
\end{proof}
\begin{corollary} \label{im cor} 
$(i)$ Let $a(z)\in K[z]\setminus\{0\}$ and $b(z)\in K[z]$. We put $D=-a(z)\partial_z+b(z)$. 
Let $f(z)\in (1/z)\cdot K[[1/z]]\setminus \{0\}$ with $D\cdot f(z)\in K[z]$. 
Put $\mathcal{E}_{a,b}=\partial_t+b(t)/a(t)\in K(t)[\partial_t]$. 
Then, for $n,k\in \Z$ with $0\le k<n$, we have $$t^k\mathcal{E}^n_{a,b}\cdot (a(t)^n)\subseteq {\rm{ker}}\,\varphi_f.$$ 

$(ii)$ Let $d,l \in \N$, $(n_1,\ldots,n_d)\in \N^d$ and $a_1(t),\ldots,a_l(t)\in K[t]\setminus \{0\}$.
Put $a(t)=a_1(t)\cdots a_l(t)$. For $b_1(t),\ldots,b_d(t)\in K[t]$ and $l$-tuple of nonnegative integers $\vec{r}_j=(r_{j,1},\ldots,r_{j,l})$ $(1\le j \le d)$, we put $D_j=-a(z)\partial_z+b_j(z)$ and 
$${{\mathcal{R}_{j,n_j}=\mathcal{R}_{D_j,n_j,\vec{r}_j}}}=\dfrac{1}{n_j!}\mathcal{E}^{n_j}_{a,b_j}a(t)^{n_j}\prod_{v=1}^la_v(t)^{-r_{j,v}}\in K(t)[\partial_t].$$ 
Let $s_1,\ldots,s_d$ be nonnegative integers and $F(t)\in \left({\displaystyle\prod_{v=1}^l}a_v(t)^{s_v+\sum_{j=1}^dr_{j,v}}\right)$. 
Then,
$$\prod_{j=1}^d\mathcal{R}_{j,n_j}\cdot F(t)\in \left({\displaystyle\prod_{v=1}^l}a_v(t)^{s_v}\right) .$$
\begin{center}{$($The statement in the first term holds for any order of product of operators $(\mathcal{R}_{j,n_j})_j$.$)$}\end{center}
\end{corollary}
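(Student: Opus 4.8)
The plan is to derive both parts as consequences of Proposition~\ref{key 1 2} together with Corollary~\ref{inc}, handling part $(i)$ first and then bootstrapping to part $(ii)$.

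\textbf{Part $(i)$.} Fix $n,k\in\Z$ with $0\le k<n$. By Proposition~\ref{key 1 2}$(i)$ applied to the operator $\mathcal{E}_{a,b}$, there are integers $(c_{n,k,l})_{0\le l\le\min\{n,k\}}$ with
$$t^k\mathcal{E}^n_{a,b}=\sum_{l=0}^{\min\{n,k\}}c_{n,k,l}\,\mathcal{E}^{n-l}_{a,b}\,t^{k-l}.$$
Since $k<n$, we have $\min\{n,k\}=k$, so every term on the right carries a factor $\mathcal{E}^{n-l}_{a,b}$ with $n-l\ge n-k\ge 1$; thus each $\mathcal{E}^{n-l}_{a,b}=\mathcal{E}_{a,b}\mathcal{E}^{n-l-1}_{a,b}$. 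Now I would feed the polynomial $a(t)^n$ through this identity: applying $t^k\mathcal{E}^n_{a,b}$ to $a(t)^n$ and using Proposition~\ref{key 1 2}$(ii)$ with the trivial factorization $a=a$ (i.e. $l=1$, $\boldsymbol{s}=(n)$, noting $a(t)^n\in\mathrm{I}((n))$ and that one application of $\mathcal{E}_{a,b}a(t)^{\bullet}$ lowers the exponent by at most one while staying in the appropriate ideal) gives that $\mathcal{E}^{n-l}_{a,b}a(t)^{n}\cdot(\text{poly})$ lies in $(a(t))$ — in particular the result is a \emph{polynomial} divisible by $a(t)$. Hence $D^{*}$-type reasoning applies: the key observation is that $D=-a(z)\partial_z+b(z)$ has adjoint $D^{*}=a(t)\partial_t+(b(t)+a'(t))$, and a short computation shows $D^{*}$ is conjugate to $a(t)\mathcal{E}_{a,b}$ in a way that makes $\mathcal{E}^n_{a,b}a(t)^n$ expressible through powers of $D^{*}$. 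Concretely, from $D\cdot f\in K[z]$ and Corollary~\ref{inc} we get $(D^{*})^j(K[t])\subseteq{\rm ker}\,\varphi_f$ for all $j\ge 1$; combined with the expansion above and the divisibility by $a(t)$, every summand $c_{n,k,l}\mathcal{E}^{n-l}_{a,b}t^{k-l}$ evaluated on $a(t)^n$ lands in ${\rm ker}\,\varphi_f$, giving $t^k\mathcal{E}^n_{a,b}\cdot(a(t)^n)\subseteq{\rm ker}\,\varphi_f$.

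\textbf{Part $(ii)$.} This is a straightforward induction on $d$ using part of Proposition~\ref{key 1 2}$(ii)$. Write $\mathcal{R}_{j,n_j}=\frac{1}{n_j!}\mathcal{E}^{n_j}_{a,b_j}a(t)^{n_j}\prod_v a_v(t)^{-r_{j,v}}$. Given $F(t)\in\bigl(\prod_v a_v(t)^{s_v+\sum_{j=1}^d r_{j,v}}\bigr)$, apply the innermost operator first: $\prod_v a_v(t)^{-r_{d,v}}\cdot F(t)$ lies in $\bigl(\prod_v a_v(t)^{s_v+\sum_{j=1}^{d-1}r_{j,v}+n_d}\bigr)$ after multiplying by $a(t)^{n_d}=\prod_v a_v(t)^{n_d}$ — here one uses $n_d\ge 0$ and that $a_v^{n_d}$ contributes exactly $n_d$ to each exponent; then Proposition~\ref{key 1 2}$(ii)$ (with $\boldsymbol{s}$ shifted by $\sum_{j=1}^{d-1}\vec{r}_j$) gives $\mathcal{E}^{n_d}_{a,b_d}a(t)^{n_d}\cdot(\cdots)\in\bigl(\prod_v a_v(t)^{s_v+\sum_{j=1}^{d-1}r_{j,v}}\bigr)$, which is precisely the hypothesis needed to apply the operator $\mathcal{R}_{d-1,n_{d-1}}$, and so on down to $j=1$. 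Since at each stage the ideal membership statement is symmetric in which operator we peel off (each $\mathcal{R}_{j,n_j}$ turns $\mathrm{I}(\boldsymbol{s}+\vec{r}_j)$ into $\mathrm{I}(\boldsymbol{s})$), the conclusion holds for any order of the product, which is the parenthetical remark.

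\textbf{Main obstacle.} The delicate point is part $(i)$: one must check carefully that when $k<n$ the expansion from Proposition~\ref{key 1 2}$(i)$ really does force \emph{every} summand, after evaluation at $a(t)^n$, to be both a polynomial \emph{and} in the image of some positive power of $D^{*}$ (equivalently, in ${\rm ker}\,\varphi_f$) — the bookkeeping linking $\mathcal{E}_{a,b}$, the factor $a(t)^n$, and the adjoint $D^{*}=a(t)\partial_t+b(t)+a'(t)$ is where the factorization hypothesis and the condition $D\cdot f\in K[z]$ must be combined precisely. Everything else is formal manipulation of ideals and operators.
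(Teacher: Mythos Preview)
Your outline for part $(ii)$ is essentially the paper's argument (reduction to the case $d=1$ followed by Proposition~\ref{key 1 2}$(ii)$), and the induction you describe is correct.

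The genuine gap is in part $(i)$, precisely at the ``main obstacle'' you flag. You write that $D^{*}$ is ``conjugate to $a(t)\mathcal{E}_{a,b}$ in a way that makes $\mathcal{E}^n_{a,b}a(t)^n$ expressible through powers of $D^{*}$''. Neither clause is right, and the second is not needed. A direct computation gives the \emph{exact} operator identity
\[
D^{*}=\partial_t\,a(t)+b(t)=\mathcal{E}_{a,b}\,a(t),
\]
with the factors in this order (note $a(t)\mathcal{E}_{a,b}=a(t)\partial_t+b(t)=D^{*}-a'(t)$, so your ordering is off by $a'(t)$ and there is no conjugacy to invoke). From $D^{*}=\mathcal{E}_{a,b}a(t)$ one reads off immediately $D^{*}(K[t])=\mathcal{E}_{a,b}\cdot(a(t))$, hence by Corollary~\ref{inc} the inclusion $\mathcal{E}_{a,b}\cdot(a(t))\subseteq\ker\varphi_f$. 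No higher powers of $D^{*}$ are required.

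With this identity in hand, the bookkeeping you were worried about dissolves. After expanding $t^{k}\mathcal{E}^{n}_{a,b}=\sum_{l=0}^{k}c_{n,k,l}\mathcal{E}^{n-l}_{a,b}t^{k-l}$ via Proposition~\ref{key 1 2}$(i)$, it is enough to check, for each $0\le l\le k$, that
\[
\mathcal{E}^{n-l}_{a,b}\,t^{k-l}\cdot\bigl(a(t)^{n}\bigr)\subseteq \mathcal{E}_{a,b}\cdot(a(t)).
\]
Since $n-l\ge 1$, factor off one $\mathcal{E}_{a,b}$ on the left; it then remains to see $\mathcal{E}^{\,n-l-1}_{a,b}\cdot(a(t)^{n})\subseteq(a(t))$. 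This is the Leibniz/ideal step you already sketched: one application of $\mathcal{E}_{a,b}$ to an element of $(a(t)^{m})$ lands in $(a(t)^{m-1})$, so $n-l-1\le n-1$ applications send $(a(t)^{n})$ into $(a(t)^{l+1})\subseteq(a(t))$. That closes part~$(i)$ without any appeal to ``$\mathcal{E}^{n}_{a,b}a(t)^{n}$ expressible through powers of $D^{*}$'', which as stated is neither proved nor true in general.
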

\begin{proof}
$(i)$ By the definition of $D$, we have $D^{*}=\mathcal{E}_{a,b}a(t)$. Since we have $\mathcal{E}_{a,b}\cdot (a(t))\subseteq {\rm{ker}}\,\varphi_f$, by {{Corollary}} $\ref{inc}$, it suffices to show   
$t^k\mathcal{E}^n_{a,b}\cdot (a(t)^n)\subset \mathcal{E}_{a,b}\cdot (a(t))$. Relying on Proposition $\ref{key 1 2} \ (i)$, there are $\{c_{n,k,l}\}_{0\le l \le k}\subset {{\Z}}$ with 
\begin{align} \label{key 1 k<n}
t^k\mathcal{E}^n_{a,b}=\sum_{l=0}^k c_{n,k,l}\mathcal{E}^{n-l}_{a,b}t^{k-l}.
\end{align}
For an integer $l$ with $0\le l \le k$, 
\begin{align*} 
\mathcal{E}^{n-l}_{a,b}t^{k-l} \cdot (a(t)^n)\subset \mathcal{E}_{a,b}\mathcal{E}^{n-l-1}_{a,b}\cdot (a(t)^n).
\end{align*}
The Leibniz formula allows us to get $\mathcal{E}^{n-l-1}_{a,b}\cdot (a(t)^n)\subset (a(t))$.
Combining Equation $(\ref{key 1 k<n})$ and the above relation gives
$$t^k\mathcal{E}^n_{a,b}\cdot (a(t)^n)\subset \mathcal{E}_{a,b}\cdot (a(t)).$$
This completes the proof of item $(i)$.

$(ii)$  
It suffices to prove the assertion in the case of $d=1$. By the definition of $\mathcal{R}_{1,n_1}$,
\begin{align*}
\mathcal{R}_{1,n_1}\cdot F(t)&=\dfrac{1}{n_1!}\mathcal{E}^{n_1}_{a,b_1}a(t)^{n_1}\prod_{v=1}^{{l}} a_v(t)^{-r_{1,v}}\cdot F(t) \in \mathcal{E}^{n_1}_{a,b_1}a(t)^{n_1}\cdot \left(\prod_{v=1}^la_v(t)^{s_v}\right).
\end{align*}
Using Proposition $\ref{key 1 2} \ (ii)$, we conclude that 
$\mathcal{E}^{n_1}_{a,b_1}a(t)^{n_1}\cdot \left(\prod_{v=1}^la_v(t)^{s_v}\right)\subset  \left(\prod_{v=1}^la_v(t)^{s_v}\right)$. 
This completes the proof of item $(ii)$.
\end{proof}

\section{Rodrigues formula of Pad\'{e} approximants}
\begin{lemma} \label{solutions}
Let $a(z),b(z)\in K[z]$ with $a(z)\neq 0$, ${\rm{deg}}\, a=u$ and ${\rm{deg}}\,b=v$. Put $$D=-a(z)\partial_z+b(z)\in K[z,\partial_z], \ \ a(z)=\sum_{i=0}^ua_iz^i, \ \ b(z)=\sum_{j=0}^vb_jz^j,$$ and $w=\max\{u-2,v-1\}$.
Assume $w\ge 0$ {{and
\begin{align}\label{assump a,b}
a_u(k+u)+b_v\neq 0 \ \ \ \text{for all} \ \ \ k\ge0 \ \ \ \text{if} \ \ \  u-2=v-1.
\end{align}}}
Then there exist $f_0(z),\ldots,f_w(z)\in (1/z)\cdot K[[1/z]]$ that are linearly independent over $K$ and satisfy $D\cdot f_l(z)\in K[z]$ for $0\le l \le w$. 
\end{lemma}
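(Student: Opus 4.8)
The plan is to produce the $f_l(z)$ as formal Laurent series solutions of the inhomogeneous equation $D\cdot f = (\text{polynomial})$ by comparing coefficients. Write $f(z)=\sum_{k\ge 0} c_k/z^{k+1}$ with undetermined coefficients $c_k\in K$. Since $D = -a(z)\partial_z + b(z)$ with $\deg a = u$ and $\deg b = v$, applying $D$ to $z^{-(k+1)}$ produces a finite $K$-linear combination of powers $z^{i-k-1}$ for $0\le i\le u$ together with $z^{j-k-1}$ for $0\le j\le v$; the top-degree contribution to the coefficient of a fixed power of $z$ comes precisely from the pair of indices realizing $w=\max\{u-2,v-1\}$. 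So $D\cdot f(z)$ lies in $K[z]$ if and only if, for every $k\ge 0$, the coefficient of $z^{-(k+1)}$ in $D\cdot f$ vanishes; this is a linear recurrence that expresses $a_u(k+u)+b_v$ times (the ``new'' coefficient) in terms of finitely many previous $c_j$'s. First I would write this recurrence down explicitly: the coefficient of $z^{-(k+1)}$ in $D\cdot f$ has the shape
\begin{align*}
\bigl(a_u(k+u)+b_v\bigr)c_{k+w+1} + (\text{linear combination of } c_{k+w},\ldots)=0,
\end{align*}
where for $u-2>v-1$ one has $b_v$ absent (the leading term is $a_u(k+u)c_{k+w+1}$, which never vanishes since $a_u\ne 0$), for $u-2<v-1$ one has $a_u$ absent at top order, and for $u-2=v-1$ both appear and the hypothesis \eqref{assump a,b} is exactly what guarantees the leading coefficient $a_u(k+u)+b_v$ is nonzero for all $k\ge 0$.

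Next I would argue the existence of a $(w+1)$-dimensional solution space. The recurrence determines $c_{k+w+1}$ uniquely once $c_0,\ldots,c_{k+w}$ are known, \emph{provided} the leading coefficient is invertible in $K$ — which the nonvanishing assumption ensures for every index $k\ge 0$. Hence the ``initial data'' $c_0, c_1, \ldots, c_w$ (that is $w+1$ values) can be prescribed freely in $K$, and each choice extends uniquely to a formal Laurent series $f\in (1/z)K[[1/z]]$ with $D\cdot f\in K[z]$ (indeed $D\cdot f$ is a polynomial of degree at most $w$, since only finitely many of the ``polynomial-part'' coefficients — those of $z^0,\ldots,z^w$ — are unconstrained). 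Choosing the standard basis of initial data, $(c_0,\ldots,c_w)=e_l$ for $0\le l\le w$, yields $f_0,\ldots,f_w$ whose leading behaviors $f_l(z)= z^{-(l+1)}+O(z^{-(w+2)})$ are visibly linearly independent over $K$ (the matrix of their first $w+1$ coefficients is the identity).

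One subtlety to address carefully: I should confirm that $D\cdot f$ genuinely lands in $K[z]$ and not merely in $K[z]\oplus(1/z)K[[1/z]]$ with a nonzero tail — but that is automatic, because imposing the vanishing of \emph{every} negative-power coefficient is exactly the recurrence we solved, so the tail is zero by construction. A second point: when $u-2=v-1$ the two ``leading'' indices coincide and one must check that the coefficient of $z^{-(k+1)}$ really is $\bigl(a_u(k+u)+b_v\bigr)c_{k+w+1}$ plus lower terms and not something where the $c_{k+w+1}$ terms cancel; a direct expansion of $-a(z)\partial_z(z^{-(k+w+2)})$ and $b(z)z^{-(k+w+2)}$ settles this. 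The main obstacle is therefore purely bookkeeping: correctly identifying which coefficient of $f$ is the ``pivot'' at each step (namely $c_{k+w+1}$), verifying its coefficient is $a_u(k+u)+b_v$ up to sign, and handling the three cases $u-2\gtrless v-1$ uniformly; once the recurrence is pinned down, existence of the $(w+1)$-parameter family and their linear independence are immediate.
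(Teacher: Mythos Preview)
Your proposal is correct and follows essentially the same approach as the paper: both expand $D\cdot f$ for $f=\sum_{k\ge 0}c_k z^{-(k+1)}$, identify the coefficient of $z^{-(k+1)}$ as a linear recurrence in the $c_k$ with pivot term $(a_u(k+u)+b_v)\,c_{k+w+1}$ (or the appropriate truncation when $u-2\ne v-1$), use the nonvanishing hypothesis to solve for $c_{k+w+1}$ from earlier terms, and conclude that the map $(c_0,\ldots,c_w)\mapsto f$ is a $K$-linear isomorphism onto the solution set. Your additional remarks about the polynomial part having degree at most $w$ and the explicit basis $f_l=z^{-(l+1)}+O(z^{-(w+2)})$ are compatible refinements of the paper's argument.
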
 
\begin{proof}
Let $f(z)=\sum_{k=0}^{\infty}f_k/z^{k+1}\in (1/z)\cdot K[[1/z]]$ be a Laurent series. 
There exists a polynomial $A(z)\in K[z]$ that depends on the operator $D$ and $f$ with ${\rm{deg}}\,A\le w$ and satisfying
\begin{align} \label{Df}
D\cdot f(z)=A(z)
+\sum_{k=0}^{\infty}\dfrac{\sum_{i=0}^ua_i(k+i)f_{k+i-1}+\sum_{j=0}^vb_jf_{k+j}}{z^{k+1}}.
\end{align}
Put $$\sum_{i=0}^ua_i(k+i)f_{k+i-1}+\sum_{j=0}^vb_jf_{k+j}=c_{k,0}f_{k-1}+\cdots+c_{k,w}f_{k+w}+c_{k,w+1}f_{k+w+1} \ \ \ \text{for} \ \ \ k\ge 0,$$ with $c_{0,0}=0$.
{{We remark that $c_{k,l}$ depends only on $a(z),b(z)$.}}  
Notice that $c_{k,w+1}$ is $a_u(k+u)$ if $u-2>v-1$, $b_v$ if $u-2<v-1$ and $a_u(k+u)+b_v$ if $u-2=v-1$.
{{Then by Equation \eqref{assump a,b}, we have $\min\{k\ge 0\mid c_{k',w+1}\neq 0 \ \text{for all} \ k'\ge k\}=0$
and thus the $K$-linear map:
$$K^{w+1}\longrightarrow \{f\in (1/z)\cdot K[[1/z]]\mid D\cdot f\in K[z]\}; \ \ (f_0,\ldots,f_w)\mapsto \sum_{k=0}^{\infty} \dfrac{f_k}{z^{k+1}},$$
where, for $k\ge w+1$, $f_k$ is determined inductively by 
\begin{align*}
&\sum_{i=0}^ua_i(k+i)f_{l,k+i-1}+\sum_{j=0}^vb_jf_{l,k+j}=0 \ \ \text{for} \ \ k\ge 0 
\end{align*}
is an isomorphism.}}
This completes the proof of Lemma $\ref{solutions}$.
\end{proof}
Let us state a generalization of the Rodrigues formula for Legendre polynomials to Pad\'{e} approximants of certain holonomic Laurent series, which gives a generalization of \cite[Theorem $1$]{A-B-V}.
{{In the following theorem, we construct Pad\'{e} approximants of the family of Laurent series considered in Lemma $\ref{solutions}$.}}
\begin{theorem} \label{main general}
Let $l,d\in \N$, $(a_1(z),\ldots,a_l(z))\in (K[z]\setminus\{0\})^l$ and $(b_{1}(z),\ldots,b_{d}(z))\in K[z]^d$.
Put $a(z)=a_{1}(z)\cdots a_{l}(z)$.
Put $D_{j}=-a(z)\partial_z+b_{j}(z) \in K[z,\partial_z]$ and $w_{j}=\max\{{\rm{deg}}\, a-2, {\rm{deg}}\, b_{j}-1\}$. Assume $w_j\ge 0$ for $1\le j \le d$ and \eqref{assump a,b} for $D_j$.
Let $f_{j,0}(z),\ldots,f_{j,w_{j}}(z)\in (1/z)\cdot K[[1/z]]$ be formal Laurent series that are linearly independent over $K$ satisfying $$D_{j}\cdot f_{j,u_{j}}(z)\in K[z] \ \ \text{for} \ \  0\le u_{j} \le w_{j}.$$
(The existence of such series is ensured by Lemma $\ref{solutions}$.)
Let $(n_1,\ldots,n_d)\in \N^d$. 
For an $l$-tuple of nonnegative integers $\vec{r}_j=(r_{j,1},\ldots,r_{j,l})$ $(1\le j \le d)$, we denote by $R_{j,n_j}$ the weighted Rodrigues operator $R_{D_{j},n_j,\vec{r}_j}$ associated with $D_j$. Assume
\begin{align}
&R_{{j_1},n_{j_1}}  R_{{j_2},n_{j_2}}=R_{{j_2},n_{j_2}} R_{{j_1},n_{j_1}} \ \ \text{for}  \ \ 1\le j_1,j_2\le d. \label{commutative}
\end{align}
Take a nonzero polynomial $F(z)$ that is contained in the ideal $(\prod_{v=1}^{l}a_{v}(z)^{\sum_{j=1}^d r_{j,v}})$ and put
\begin{align*}
&P(z)=\prod_{j=1}^dR_{j,n_j}\cdot F(z),\\
&Q_{j,u_{j}}(z)=\varphi_{f_{j,u_{j}}}\left(\dfrac{P(z)-P(t)}{z-t}\right)  \ \ \text{for} \ \ 1\le j \le d, \ 0\le u_{j}\le w_{j} .
\end{align*}
Assume $P(z)\neq 0$ (We need to assume $P(z)\neq 0$. For example, in the case of $d=1$, $D=-\partial_z z^2=-z^2\partial-2z$ and $n=1$, we have $P(z)=(\partial_z-2/z)z^2\cdot 1=0$.)
Then the vector of polynomials $(P(z),Q_{j,u_{j}}(z))_{\substack{ \\ 1\le j \le d \\ 0\le u_{j}\le w_{j}}}$ is a weight $(\boldsymbol{n}_1,\ldots,\boldsymbol{n}_m)\in \N^{\sum_{j=1}^d(w_{j}+1)}$ Pad\'{e}-type approximants of $(f_{j,u_{j}}(z))_{\substack{ 1\le j \le d \\ 0\le u_{j}\le w_{j}}}$, where $\boldsymbol{n}_j=(n_j,\ldots,n_j)\in \N^{w_j+1}$ for $1\le j \le d$.
\end{theorem}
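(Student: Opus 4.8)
The plan is to reduce the statement, via Lemma \ref{equivalence Pade}, to a purely algebraic membership claim: it suffices to show that for each $j$ and each $u_j$ with $0 \le u_j \le w_j$, we have $t^k P(t) \in {\rm{ker}}\, \varphi_{f_{j,u_j}}$ for all $0 \le k \le n_j - 1$. The degree condition ${\rm{deg}}\, P \le M$ with $M = \sum_{j=1}^d n_j (w_j+1)$ (or whatever the relevant total weight is) should follow from a direct degree count on the operators $R_{j,n_j}$ applied to $F(z)$, using that each application of $R_{D,n,\vec r}$ raises degree in a controlled way; I would record this as a preliminary computation. The main work is the kernel membership.

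For the kernel membership, fix $j$ and $u_j$. Using commutativity \eqref{commutative}, I would rewrite $P(z) = R_{j,n_j} \cdot \left( \prod_{j' \ne j} R_{j',n_{j'}} \cdot F(z) \right)$, pulling the $j$-th Rodrigues operator to the outside. The key point is then to show two things. First, by Corollary \ref{im cor}(ii), the inner expression $G(z) := \prod_{j' \ne j} R_{j',n_{j'}} \cdot F(z)$ still lies in the ideal $(\prod_{v=1}^l a_v(z)^{n_j r_{j,v}})$ — more precisely in $(\prod_v a_v^{\sum_{j' \ne j} r_{j',v}})$, but one needs at least $(\prod_v a_v^{r_{j,v}})$, which holds since $F$ was chosen in the ideal with exponents $\sum_{j'} r_{j',v} \ge r_{j,v}$ and the operators $R_{j',n_{j'}}$ only consume the $r_{j',v}$ part of the exponents while $\mathcal E^{n}a^n$ preserves ideal membership by Proposition \ref{key 1 2}(ii). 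Second, I would invoke Corollary \ref{im cor}(i): since $D_j \cdot f_{j,u_j} \in K[z]$, we have $t^k \mathcal E_{a,b_j}^{n_j} \cdot (a(t)^{n_j}) \subseteq {\rm{ker}}\, \varphi_{f_{j,u_j}}$ for $0 \le k < n_j$. Unwinding the definition $R_{j,n_j} = \frac{1}{n_j!}\mathcal E_{a,b_j}^{n_j} a(t)^{n_j} \prod_v a_v(t)^{-r_{j,v}}$, and writing $G(t) = \left(\prod_v a_v(t)^{r_{j,v}}\right)\tilde G(t)$ with $\tilde G \in K[t]$, we get $t^k P(t) = \frac{1}{n_j!} t^k \mathcal E_{a,b_j}^{n_j} a(t)^{n_j} \cdot \tilde G(t)$, which lies in $t^k \mathcal E_{a,b_j}^{n_j} \cdot (a(t)^{n_j}) \subseteq {\rm{ker}}\, \varphi_{f_{j,u_j}}$ — wait, one must be careful: $\mathcal E^{n_j} a^{n_j} \cdot \tilde G$ need not literally be in the ideal $(a^{n_j})$, so instead I should apply Corollary \ref{im cor}(i) in the sharper form, or re-derive it, noting that the relevant statement is $t^k \mathcal E_{a,b_j}^{n_j}(a(t)^{n_j} \cdot H(t)) \in {\rm{ker}}\,\varphi_{f_{j,u_j}}$ for arbitrary $H \in K[t]$, which follows from the same proof since $D_j^* = \mathcal E_{a,b_j} a(t)$ and $\mathcal E_{a,b_j}\cdot(a(t)) \subseteq {\rm{ker}}\,\varphi_{f_{j,u_j}}$ together with the commutation identity from Proposition \ref{key 1 2}(i).

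So the structure is: (1) degree bound on $P$; (2) use commutativity to expose $R_{j,n_j}$; (3) use Corollary \ref{im cor}(ii) repeatedly to keep the partial product in the correct ideal so that the factor $\prod_v a_v(t)^{-r_{j,v}}$ in $R_{j,n_j}$ is absorbed and leaves a genuine polynomial; (4) apply the $k < n_j$ kernel statement (Corollary \ref{im cor}(i), suitably stated) to conclude $t^k P(t) \in {\rm{ker}}\,\varphi_{f_{j,u_j}}$; (5) invoke Lemma \ref{equivalence Pade} to conclude.

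The main obstacle I anticipate is bookkeeping the ideal exponents correctly in step (3)–(4): one must verify that after applying $\prod_{j'\ne j} R_{j',n_{j'}}$ to $F$, the result is divisible by exactly $\prod_v a_v(t)^{r_{j,v}}$ (so that $R_{j,n_j}$ produces a polynomial, not a rational function), and this requires tracking how Corollary \ref{im cor}(ii) transforms the exponent vector at each step — the input exponents must dominate $\sum_{j'} r_{j',v}$ and each operator $R_{j',n_{j'}}$ reduces the available exponent by $r_{j',v}$ while the $\mathcal E^{n}a^n$ part preserves it. A secondary subtlety is that Corollary \ref{im cor}(i) as literally stated gives membership of $t^k \mathcal E^n \cdot (a(t)^n)$ in the kernel, whereas here we apply $\mathcal E^n a^n$ to a general polynomial $\tilde G$; I would either restate and reprove that corollary in the needed generality (the proof goes through verbatim using $D_j^* = \mathcal E_{a,b_j}a(t)$ and Corollary \ref{inc}) or absorb $\tilde G$ into the ideal argument. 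Neither obstacle is deep, but getting the exponent arithmetic exactly right is where an error would most likely creep in.
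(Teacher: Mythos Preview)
Your proposal is correct and follows essentially the same approach as the paper's proof: reduce via Lemma~\ref{equivalence Pade} to the kernel membership $t^kP(t)\in\ker\varphi_{f_{j,u_j}}$, use commutativity~\eqref{commutative} to expose $\mathcal{R}_{j,n_j}$, apply Corollary~\ref{im cor}(ii) to keep the inner product in the ideal $(\prod_v a_v(t)^{r_{j,v}})$, and then invoke Corollary~\ref{im cor}(i). Two minor remarks: the paper does not bother with a separate degree bound (it implicitly takes $M=\deg P$, so condition~(i) of the definition is automatic), and your ``secondary subtlety'' about Corollary~\ref{im cor}(i) is not actually an issue --- the notation $t^k\mathcal{E}^{n_j}_{a,b_j}\cdot(a(t)^{n_j})$ there refers to the image of the entire ideal $(a(t)^{n_j})$, so applying $\mathcal{E}^{n_j}_{a,b_j}$ to $a(t)^{n_j}\tilde G(t)$ for arbitrary $\tilde G\in K[t]$ is already covered by the statement as written.
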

\begin{proof}
By Lemma $\ref{equivalence Pade}$, it suffices to prove that any triple $(j,u_{j},k)$ with $1\le j \le d$, $0\le u_{j} \le w_{j}$, $0\le k \le n_j-1$ satisfy $t^kP(t)\in {\rm{ker}}\,\varphi_{f_{j,u_{j}}}$. 
Put $\mathcal{R}_{j,n_j}=\mathcal{R}_{D_{j},n_j,\vec{r}_{j}}$. Then we have $P(t)={\prod_{j=1}^d}\mathcal{R}_{j,n_j}\cdot F(t)$ and thus 
\begin{align} \label{decomposition i,j,k}
t^kP(t)=t^k\mathcal{R}_{j,n_j}\prod_{j^{\prime}\neq j}\mathcal{R}_{j^{\prime},n_{j^{\prime}}}\cdot F(t).
\end{align}
Since $F(t)\in (\prod_{v=1}^{l}a_{v}(z)^{r_{j,v}+\sum_{j'\neq j}^d r_{j',v}})$, using Corollary $\ref{im cor} \ (ii)$, we obtain 
$${\displaystyle\prod_{j^{\prime}\neq j}}\mathcal{R}_{j^{\prime},n_{j^{\prime}}}\cdot F(t)\in \biggl(\prod_{v=1}^{l}a_{v}(t)^{r_{j,v}}\biggr).$$
Combining Equation $(\ref{decomposition i,j,k})$ and the above relation yields
\begin{align*} 
t^kP(t)\in t^k\mathcal{R}_{j,n_j}\cdot  \biggl(\prod_{v=1}^{l}a_{v}(t)^{r_{j,v}}\biggr)\subseteq t^k\mathcal{E}^{n_j}_{a,b_{j}}\cdot (a(t)^{n_j})\subseteq {\rm{ker}}\,\varphi_{f_{j,u_{j}}}.
\end{align*}
Note that the last inclusion is obtained from Corollary $\ref{im cor} \ (i)$ for $D_{j}\cdot f_{j,u_{j}}(z)\in K[z]$.
\end{proof}

\subsection{Commutativity of differential operators}
In this subsection, we give a sufficient condition under which weighted Rodrigues operators commute.
We denote $\partial_z\cdot c(z)$ by $c^{\prime}(z)$ for any rational function $c(z)\in K(z)$.
\begin{lemma} \label{decompose}
Let $a(z),b(z)\in K[z]$ and $c(z)\in K(z)$ with $a(z)c(z)\neq 0$. 
Let $w(z)$ be a nonzero solution of $-a(z)\partial_z+b(z)$ in some differential extension $\mathcal{K}$ of $K(z)$ and $n$ a nonnegative integer. Put 
$$R_n=\dfrac{1}{n!}\left(\partial_z+\dfrac{b(z)}{a(z)}\right)^nc(z)^n\in K(z)[\partial_z].$$
Then, in the ring $\mathcal{K}[\partial_z]$, we have the equality: 
$$R_n=\dfrac{1}{n!}w(z)^{-1}\partial^n_z w(z)c(z)^n=\dfrac{1}{n!}R_{1}(R_{1}+c^{\prime}(z))\cdots (R_{1}+(n-1)c^{\prime}(z)).$$
\end{lemma}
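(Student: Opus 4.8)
The plan is to establish the two claimed equalities separately, reducing both to routine differential-operator manipulations. First I would prove the identity $R_n=\frac{1}{n!}w(z)^{-1}\partial_z^n w(z)c(z)^n$. The key observation is that, since $w$ satisfies $-a(z)\partial_z\cdot w+b(z)w=0$, we have $w'/w=b(z)/a(z)$ in $\mathcal{K}$, hence the first-order operator $\partial_z+b(z)/a(z)$ equals $w(z)^{-1}\partial_z w(z)$ as an element of $\mathcal{K}[\partial_z]$ (this is the classical conjugation/integrating-factor trick: $w^{-1}\partial_z w\cdot g = w^{-1}(w'g+wg')=g'+(w'/w)g$). Raising to the $n$th power telescopes: $\left(w^{-1}\partial_z w\right)^n=w^{-1}\partial_z^n w$, because the inner $w\cdot w^{-1}$ factors cancel. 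Multiplying on the right by $c(z)^n$ and dividing by $n!$ gives the first equality. I would present this as a short induction on $n$ for the telescoping step, or simply remark that $(w^{-1}\partial_z w)(w^{-1}\partial_z w)=w^{-1}\partial_z^2 w$ and iterate.

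For the second equality, $R_n=\frac{1}{n!}R_1(R_1+c'(z))\cdots(R_1+(n-1)c'(z))$, I would work from the factored form $n!\,R_n=w^{-1}\partial_z^n w c^n$ just obtained, and compare it with the product $\prod_{j=0}^{n-1}(R_1+j c'(z))$ where $R_1=w^{-1}\partial_z w c=(\partial_z+b/a)c$. The natural approach is induction on $n$: assuming $n!\,R_n=w^{-1}\partial_z^n w c^n$ has been shown equal to $R_1(R_1+c')\cdots(R_1+(n-1)c')$, one computes $(n+1)!\,R_{n+1}=w^{-1}\partial_z^{n+1}w c^{n+1}=(w^{-1}\partial_z w)(w^{-1}\partial_z^n w c^n)c = R_1 c^{-1}\cdot (n!\,R_n)\cdot c$, so it remains to check the conjugation identity $c^{-1}(n!\,R_n)c = (R_1+\text{something})\cdots$. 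Concretely, the clean way is to prove the operator identity $w^{-1}\partial_z^n w c^n \cdot c = $ (the length-$(n+1)$ product) by verifying that conjugating the length-$n$ product by $c$, i.e. replacing it by $c^{-1}(\cdot)c$ on the appropriate side, shifts each factor $R_1+jc'$; this rests on the commutation rule $\partial_z c(z) = c(z)\partial_z + c'(z)$, equivalently $c^{-1}(w^{-1}\partial_z w c)c = w^{-1}\partial_z w c + c'(z)/1$ type relations. I expect to organize it so that one shows $R_1 c = c(R_1+c')$ as operators — more precisely $(w^{-1}\partial_z w c)\,c = c\,(w^{-1}\partial_z w c) + c\,c' $, wait, one must be careful: the precise statement I would verify is $R_1\cdot c(z) = c(z)\cdot\bigl(R_1 + c'(z)\bigr)$ in $\mathcal{K}[\partial_z]$, which follows directly from $\partial_z\circ c = c\circ\partial_z + c'$. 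Iterating this intertwining relation through the telescoped form produces exactly the shifted product.

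The main obstacle I anticipate is purely bookkeeping: keeping straight on which side the factors $c(z)^n$ and the shifts $jc'(z)$ act, since $R_1$ and $c(z)$ do not commute and the paper's operators multiply in a fixed order. In particular the second equality as written has the $(R_1+jc')$ factors in increasing order of $j$ from left to right, so I must make sure the induction peels off a factor from the correct end and that the conjugation identity $R_1 c = c(R_1+c')$ is applied in the direction that inserts $R_1+n c'(z)$ (or $R_1$, depending on bookkeeping) at the right place. Once the intertwining relation $\partial_z\circ c(z)=c(z)\circ\partial_z+c'(z)$ is fixed as the single computational input, everything else is a careful but routine induction, and no deeper idea is needed. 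I would also note in passing that the hypothesis $a(z)c(z)\neq 0$ and the existence of a nonzero $w$ in a differential field extension guarantee all the manipulations (in particular inverting $w$ and $c$) are legitimate in $\mathcal{K}[\partial_z]$.
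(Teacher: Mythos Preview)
Your argument is correct. The first equality is handled exactly as in the paper: the integrating-factor identity $\partial_z w = w(\partial_z + b/a)$, i.e.\ $w^{-1}\partial_z w = \partial_z + b/a$, followed by the telescoping $(w^{-1}\partial_z w)^n = w^{-1}\partial_z^n w$.

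For the second equality your route differs slightly in organisation from the paper's. You prove the intertwining $R_1\,c = c\,(R_1+c')$, use it to show $c^{-1}(n!\,R_n)c = (R_1+c')\cdots(R_1+nc')$, and then prepend $R_1$ on the left via $(n{+}1)!\,R_{n+1} = (\partial_z+b/a)\,(n!\,R_n)\,c = R_1\,c^{-1}(n!\,R_n)\,c$. The paper instead peels from the right in one step, using the single identity
\[
\left(\partial_z+\tfrac{b}{a}\right)c^n \;=\; c^{n-1}\bigl(R_1+(n-1)c'\bigr),
\]
which immediately gives $n!\,R_n = (n{-}1)!\,R_{n-1}\,(R_1+(n-1)c')$. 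Both arguments rest on the same commutation $\partial_z\circ c = c\circ\partial_z + c'$ and are equivalent; the paper's version avoids the conjugation bookkeeping you flagged as the main obstacle, so you may find it cleaner to adopt.
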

\begin{proof}
The first equality is readily obtained using the identity $$\partial_zw(z)=w(z)\left(\partial_z+\dfrac{b(z)}{a(z)}\right).$$
The second equality is proved using the identity
\begin{align*}
\left(\partial_z+\dfrac{b(z)}{a(z)}\right){{c(z)^n}}&=\left[c(z)^{n-1}\left(\partial_z+\dfrac{b(z)}{a(z)}\right)+(n-1)c^{\prime}(z)c(z)^{n-2}\right]c(z)\\
&=c(z)^{n-1}(R_1+(n-1)c^{\prime}(z)).
\end{align*}
This completes the proof of Lemma \ref{decompose}.
\end{proof}

\begin{lemma} \label{commute 0}
Let $a(z),b_1(z),b_2(z),c(z)\in K[z]$ with $a(z)c(z)\neq 0$.
For a nonnegative integer $n$ and $j=1,2$, $$R_{j,n}=\dfrac{1}{n!}\left(\partial_z+\dfrac{b_j(z)}{a(z)}\right)^nc(z)^n.$$ 
Assume ${\rm{deg}}\,c\le 1$. Then the following are equivalent.

$(i)$ For any $n_1,n_2\in \N$, we have $R_{1,n_1}R_{2,n_2}=R_{2,n_2}R_{1,n_1}$.

$(ii)$ We have
$(b_2(z)-b_1(z))/a(z)c(z)\in K$.
\end{lemma}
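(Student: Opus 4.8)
The plan is to reduce the equivalence, stated for all $n_1,n_2\in\N$, to the single commutation relation $R_{1,1}R_{2,1}=R_{2,1}R_{1,1}$, and then to compute that one commutator by hand. For the reduction I would invoke Lemma \ref{decompose}: since $\deg c\le 1$, the rational function $c'(z)$ is a constant $\gamma\in K$, so the second factorization there becomes $R_{j,n}=\frac{1}{n!}R_{j,1}(R_{j,1}+\gamma)\cdots(R_{j,1}+(n-1)\gamma)$, exhibiting each $R_{j,n}$ as a polynomial in $R_{j,1}$ with coefficients in $K$ (with $R_{j,0}=\mathrm{Id}$). Hence if $R_{1,1}$ and $R_{2,1}$ commute in $K(z)[\partial_z]$, then so do $R_{1,n_1}$ and $R_{2,n_2}$ for all $n_1,n_2$; conversely $(i)$ at $n_1=n_2=1$ is exactly $R_{1,1}R_{2,1}=R_{2,1}R_{1,1}$. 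So $(i)$ holds if and only if $R_{1,1}$ and $R_{2,1}$ commute.

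For the commutator I would write $R_{j,1}=\bigl(\partial_z+b_j(z)/a(z)\bigr)c(z)=c(z)\partial_z+g_j(z)$ in $K(z)[\partial_z]$, where $g_j(z)=c'(z)+b_j(z)c(z)/a(z)$. The key observation is that $R_{1,1}-R_{2,1}=g_1(z)-g_2(z)$ is a multiplication operator, the first-order parts $c(z)\partial_z$ having cancelled; so, using $[A-B,B]=[A,B]$ together with the elementary rule $[\phi,c\partial_z]=-c\phi'$ valid for a multiplication operator by $\phi\in K(z)$, one gets
\begin{align*}
[R_{1,1},R_{2,1}]=[R_{1,1}-R_{2,1},R_{2,1}]=[g_1-g_2,c\partial_z]=c(z)(g_2-g_1)'(z).
\end{align*}
Since $g_2-g_1=(b_2-b_1)c/a$ (the $c'$ terms cancel), the commutator equals $c(z)\bigl((b_2(z)-b_1(z))c(z)/a(z)\bigr)'$. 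As $c(z)\neq0$ and $K$ has characteristic $0$, this vanishes in $K(z)[\partial_z]$ if and only if $(b_2-b_1)c/a$ has vanishing derivative, i.e. lies in $K$ — which is condition $(ii)$. Together with the previous paragraph this yields $(i)\Leftrightarrow(ii)$.

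I do not expect a genuine obstacle here; the whole argument is a short normalization. The two points needing care are: (a) keeping operator composition distinct from pointwise multiplication throughout the commutator expansion; and (b) ensuring the reduction in the first paragraph really uses the hypothesis $\deg c\le1$ — for $\deg c\ge2$ the ``coefficients'' $k\,c'(z)$ appearing in the factorization are non-constant, $R_{j,n}$ is no longer a scalar polynomial in $R_{j,1}$, and the implication ``$R_{1,1},R_{2,1}$ commute $\Rightarrow$ all $R_{1,n_1},R_{2,n_2}$ commute'' breaks down. If one prefers to avoid the differential-extension language of Lemma \ref{decompose}, the factorization of $R_{j,n}$ into first-order operators can be read off directly from the identity $(\partial_z+b_j/a)c^n=c^{n-1}\bigl(R_{j,1}+(n-1)c'\bigr)$ established in its proof.
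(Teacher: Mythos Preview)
Your proof is correct and follows essentially the same approach as the paper: both reduce to the case $n_1=n_2=1$ via Lemma~\ref{decompose} (using that $c'\in K$), and both compute the commutator $[R_{1,1},R_{2,1}]$ to be $c(z)\bigl((b_2-b_1)c/a\bigr)'$. Your computation is organized a bit more cleanly --- exploiting $[A,B]=[A-B,B]$ and the fact that $R_{1,1}-R_{2,1}$ is a pure multiplication operator --- whereas the paper expands $R_{1,1}R_{2,1}-R_{2,1}R_{1,1}$ directly and then collects terms; the content is identical.
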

\begin{proof}
Since ${\rm{deg}}\,c \le 1$ and therefore $c^{\prime}(z)\in K$, using Lemma $\ref{decompose}$, we see that item $(i)$ is equivalent to $R_{1,1}R_{2,1}=R_{2,1}R_{1,1}$.
Let us show that the commutativity of $R_{j,1}$ $(j=1,2)$ is equivalent to item $(ii)$.
According to the identity,
\begin{align*}
R_{1,1}R_{2,1}=R_{2,1}R_{1,1}+(R_{2,1}-R_{1,1})c^{\prime}(z)+\left(\dfrac{b_2(z)-b_1(z)}{a(z)}\right)'c(z)^{{2}},
\end{align*}
the identity $R_{1,1}R_{2,1}=R_{2,1}R_{1,1}$ is equivalent to 
\begin{align*}
(R_{2,1}-R_{1,1})c^{\prime}(z)+\left(\dfrac{b_2(z)-b_1(z)}{a(z)}\right)'c(z)^{{2}}&=\left(\dfrac{b_2(z)-b_1(z)}{a(z)}c^{\prime}(z)+\left(\dfrac{b_2(z)-b_1(z)}{a(z)}\right)^{\prime}c(z)\right){{c(z)}}\\
&=\left(\dfrac{b_2(z)-b_1(z)}{a(z)}c(z)\right)^{\prime}{{c(z)}}=0,
\end{align*}
which means item $(ii)$ holds. This completes the proof of Lemma $\ref{commute 0}$.
\end{proof}

\section{Determinants associated with Pad\'{e} approximants}

Let $f_{j,u_j}(z)$ be the Laurent series in Theorem $\ref{main general}$.
To consider the linear independence results on the values of $f_{j,u_j}(z)$ \'{a} la the method of Siegel (see \cite{Siegel}), we need to study nonvanishing of determinants of certain matrices.
In this section, we compute the determinants of specific matrices whose entries are given by the Pad\'{e} approximants of $f_{j,u_j}(z)$ obtained in {{Theorem}} $\ref{main general}$.

First, $d$ be a nonnegative integer and $a_1(z),a_2(z),b_1(z),\ldots,b_d(z)\in K[z]$. 
Put $a(z)=a_1(z)a_2(z)$, $w_j=\max\{{\rm{deg}}\,a-2,{\rm{deg}}\,b_j-1\}$ and $W=w_1+\cdots+w_d+d$.

Assume $w_j\ge0$, ${\rm{deg}}\,a_1\le 1$, $a_1$ is a monic polynomial and $$\gamma_{j_1,j_2}=\dfrac{b_{j_1}(z)-b_{j_2}(z)}{a_2(z)}\in K\setminus\{0\} \ \ \text{for} \ \ 1\le j_1<j_2\le d.$$ 

Denote $D_j=-a(z)\partial_z+b_j(z)\in K[z,\partial_z]$ {{and assume \eqref{assump a,b} for $D_j$.}}
{{Lemma $\ref{solutions}$ implies that there exist Laurent series $f_{j,0}(z),\ldots,f_{j,w_j}(z)\in (1/z)\cdot K[[1/z]]$ that are linearly independent over $K$ and satisfy 
$$D_j\cdot f_{j,u_j}(z)\in K[z] \ \  \text{for} \ \  1\le j \le d, \ \ 0\le u_j \le w_j.$$ 
We now fix these series.}}
For $n\in \N$, we denote the weighted Rodrigues operator associated with $D_j$ by
$$R_{j,n}=\dfrac{1}{n!}\left(\partial_z+\dfrac{b_j(z)}{a(z)}\right)^na_1(z)^n \ \  \text{for} \ \  1\le j \le d.$$
Lemma $\ref{commute 0}$ to the case of $a(z)=a_1(z)a_2(z)$ and $c(z)=a_1(z)$ asserts that the commutativity of the differential operators $R_{j,n}$, namely
$$R_{j_1,n}R_{j_2,n}=R_{j_2,n}R_{j_1,n} \ \ \text{for} \ \ 1\le j_1,j_2 \le d.$$
Put $\varphi_{j,u_j}=\varphi_{f_{j,u_j}}$. For $0\le h \le W$, we define 
\begin{align*}
&P_{n,h}(z)=P_h(z)=\prod_{j=1}^dR_{j,n}\cdot [z^ha_2(z)^{dn}],\\
&Q_{n,j,u_j,h}(z)=Q_{j,u_j,h}(z)=\varphi_{j,u_j}\left(\dfrac{P_{h}(z)-P_{h}(t)}{z-t} \right) \ \ \text{for} \ \ 1\le j \le d, \ 0\le u_j \le w_j,\\
&\mathfrak{R}_{n,j,u_j,h}(z)=\mathfrak{R}_{j,u_j,h}(z)=P_h(z)f_{j,u_j}(z)-Q_{j,u_j,h}(z) \ \ \text{for} \ \ 1\le j \le d, \ 0\le u_j \le w_j.
\end{align*}
Assume $P_h(z)\neq 0$. Theorem $\ref{main general}$ yields that the vector of polynomials $(P_h,Q_{j,u_j,h})_{\substack{1\le j \le d \\ {{0}} \le u_j \le w_j}}$ is a weight $(n,\ldots,n)\in \N^{W}$ Pad\'{e}-type approximants of $(f_{j,u_j})_{\substack{1\le j \le d \\ 0\le u_j \le w_j}}$.

First we compute the coefficients of $1/z^{n+1}$ of $\mathfrak{R}_{j,u_j,h}(z)$.
\begin{lemma} \label{coeff n+1}
Let notations be as above. 
For $1\le j \le d$, $0\le u_j \le w_j$ and $0\le h \le W$, we have 
\begin{align*} 
\mathfrak{R}_{j,u_j,h}(z)=\sum_{k=n}^{\infty}\dfrac{\varphi_{j,u_j}(t^kP_h(t))}{z^{k+1}}
\end{align*}
and 
$$\varphi_{j,u_j}(t^nP_h(t))=\dfrac{(-1)^n}{(n!)^{d-1}}\prod_{\substack{1\le j'\le d \\ j'\neq j}}\left[\prod_{k=1}^n(\gamma_{j',j}-k\varepsilon_{a_1})\right]\varphi_{j,u_j}(t^ha_1(t)^n\cdot {a}_2(t)^{dn}),$$
where $\varepsilon_{a_1}=1$ if ${\rm{deg}}\,a_1=1$ and $\varepsilon_{a_1}=0$ if ${\rm{deg}}\,a_1=0$.
\end{lemma}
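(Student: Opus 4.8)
The plan is to combine Lemma~\ref{equivalence Pade} (in the form worked out via $\varphi_f$) with the structural identity $t^k\mathcal{E}^n_{a,b}=\sum_{l=0}^{\min\{n,k\}}c_{n,k,l}\mathcal{E}^{n-l}_{a,b}t^{k-l}$ from Proposition~\ref{key 1 2}$(i)$, specialized to $k=n$. First I would record, exactly as in Lemma~\ref{coeff n+1}'s first assertion, that $\mathfrak{R}_{j,u_j,h}(z)=\varphi_{j,u_j}(P_h(t)/(z-t))=\sum_{k\ge 0}\varphi_{j,u_j}(t^kP_h(t))/z^{k+1}$ and that the terms with $0\le k\le n-1$ vanish because $(P_h,Q_{j,u_j,h})$ is a weight $(n,\dots,n)$ Pad\'e-type approximant (Theorem~\ref{main general}); this is the content already displayed. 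So the real task is the closed formula for the leading coefficient $\varphi_{j,u_j}(t^nP_h(t))$.

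The key computation is to push the operator $t^n$ through the product $\prod_{j'=1}^d\mathcal{R}_{j',n}$ acting on $z^ha_2(z)^{dn}$, isolating the factor indexed by $j$. Write $P_h(t)=\prod_{j'}\mathcal{R}_{j',n}\cdot[t^ha_2(t)^{dn}]$; since by Lemma~\ref{commute 0} all the $\mathcal{R}_{j',n}$ commute, I would move $\mathcal{R}_{j,n}$ to the left and write $t^nP_h(t)=t^n\mathcal{R}_{j,n}\cdot G(t)$ where $G(t):=\prod_{j'\neq j}\mathcal{R}_{j',n}\cdot[t^ha_2(t)^{dn}]$. Now $\mathcal{R}_{j,n}=\tfrac1{n!}\mathcal{E}^n_{a,b_j}a(t)^n a_1(t)^{-n}=\tfrac1{n!}\mathcal{E}^n_{a,b_j}a_2(t)^n$ (using $a=a_1a_2$), so $t^n\mathcal{R}_{j,n}=\tfrac1{n!}\,t^n\mathcal{E}^n_{a,b_j}a_2(t)^n$. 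Apply Proposition~\ref{key 1 2}$(i)$ with $k=n$: $t^n\mathcal{E}^n_{a,b_j}=\sum_{l=0}^n c_{n,n,l}\mathcal{E}^{n-l}_{a,b_j}t^{n-l}$, and for $l<n$ each summand $\mathcal{E}^{n-l}_{a,b_j}t^{n-l}a_2(t)^n\cdot(\text{something in }K[t])$ lies in $\mathcal{E}_{a,b_j}\cdot(a(t))$ — exactly the chain of inclusions used in the proof of Corollary~\ref{im cor}$(i)$ — hence in $\ker\varphi_{j,u_j}$ because $D_j\cdot f_{j,u_j}\in K[z]$ and $D_j^*=\mathcal{E}_{a,b_j}a(t)$. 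Therefore only the $l=n$ term survives under $\varphi_{j,u_j}$, and $c_{n,n,n}=(-1)^n n(n-1)\cdots 1\cdot\tfrac1{n!}\cdot$... more precisely $c_{n,n,n}=(-1)^n\,n!$, which cancels the $1/n!$ and leaves $\varphi_{j,u_j}(t^nP_h(t))=(-1)^n\varphi_{j,u_j}\big(a_2(t)^n\,G(t)\big)$.

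It remains to evaluate $a_2(t)^nG(t)=a_2(t)^n\prod_{j'\neq j}\mathcal{R}_{j',n}\cdot[t^ha_2(t)^{dn}]$ inside $\varphi_{j,u_j}$, and this is where the factors $\prod_{k=1}^n(\gamma_{j',j}-k\varepsilon_{a_1})$ and the $1/(n!)^{d-1}$ appear. The idea is to use the Rodrigues-type factorization of Lemma~\ref{decompose}: with $c(z)=a_1(z)$ of degree $\le 1$, $\mathcal{R}_{j',n}=\tfrac1{n!}w_{j'}(t)^{-1}\partial_t^n w_{j'}(t)a_1(t)^n$ where $w_{j'}$ solves $-a\partial+b_{j'}$, and the ratios $w_{j'}/w_j$ are controlled by $(b_{j'}-b_j)/a=\gamma_{j',j}a_1(t)/a(t)\cdot$... — concretely $w_{j'}(t)/w_j(t)$ satisfies a first-order equation whose solution makes $w_{j'}^{-1}\partial_t^n w_{j'}$ differ from $w_j^{-1}\partial_t^n w_j$ by the shift producing $\prod_{k=1}^n(\gamma_{j',j}-k\varepsilon_{a_1})$. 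More hands-on: I would argue by induction on $d$, peeling off one operator $\mathcal{R}_{j',n}$ at a time and using the commutation identity from the proof of Lemma~\ref{commute 0} (the $R_{1,1}R_{2,1}$ relation and its degree-$\le1$ consequence $R_n=\tfrac1{n!}R_1(R_1+c')\cdots(R_1+(n-1)c')$) to convert $a_2(t)^n\mathcal{R}_{j',n}$ acting to the left, modulo $\ker\varphi_{j,u_j}$, into the scalar $\tfrac1{n!}\prod_{k=1}^n(\gamma_{j',j}-k\varepsilon_{a_1})$ times multiplication by $a_2(t)^n$ — using at each stage that lower-order error terms land in $\mathcal{E}_{a,b_j}\cdot(a(t))\subseteq\ker\varphi_{j,u_j}$ just as above. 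Iterating over the $d-1$ indices $j'\neq j$ accumulates $1/(n!)^{d-1}$ and the advertised product of the $\gamma$'s, and collects the remaining $a_2(t)^{dn}$ powers together with $t^h$ and the single surviving $a_1(t)^n$ into $\varphi_{j,u_j}(t^ha_1(t)^n a_2(t)^{dn})$.

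\textbf{Main obstacle.} The delicate point is the bookkeeping in the last step: tracking precisely how each commutation $a_2(t)^n\mathcal{R}_{j',n}\equiv(\text{scalar})\cdot a_2(t)^n\pmod{\ker\varphi_{j,u_j}}$ produces the factor $\prod_{k=1}^n(\gamma_{j',j}-k\varepsilon_{a_1})$ rather than some other product, and verifying that all intermediate error terms genuinely lie in $\mathcal{E}_{a,b_j}\cdot(a(t))$ (so that the dependence of $f_{j,u_j}$ on the \emph{particular} operator $D_j$, not $D_{j'}$, is what kills them). One has to be careful that it is $b_j$ — matching the index of the functional $\varphi_{j,u_j}$ — that appears in $\mathcal{E}_{a,b_j}$ throughout, while the operators being pushed through carry $b_{j'}$; the difference is absorbed exactly by the $\gamma_{j',j}$'s, and getting the sign and the shift by $k\varepsilon_{a_1}$ right is the crux. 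Once that is in place, the case split $\varepsilon_{a_1}\in\{0,1\}$ according to $\deg a_1$ is immediate from the $c(z)=a_1(z)$ factorization in Lemma~\ref{decompose} (when $\deg a_1=0$ the operator $R_1$ already commutes with everything and no shift occurs).
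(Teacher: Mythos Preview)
Your plan is the same as the paper's: reduce to the $l=n$ term of Proposition~\ref{key 1 2}$(i)$ using $\ker\varphi_{j,u_j}\supseteq\mathcal{E}_{a,b_j}\cdot(a(t))$, then peel off the remaining $d-1$ Rodrigues operators one by one, each contributing the scalar $\tfrac1{n!}\prod_{k=1}^n(\gamma_{j',j}-k\varepsilon_{a_1})$ modulo that same kernel.

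However, you have systematically swapped $a_1$ and $a_2$. In this section the weighted Rodrigues operator is $\mathcal{R}_{j,n}=\tfrac1{n!}\mathcal{E}^n_{a,b_j}\,a_1(t)^n$, not $\tfrac1{n!}\mathcal{E}^n_{a,b_j}\,a_2(t)^n$; so after the first reduction one obtains $t^nP_h(t)\equiv(-1)^n\,a_1(t)^n\prod_{j'\neq j}\mathcal{R}_{j',n}\cdot[t^ha_2(t)^{dn}]$, and the quantity to simplify is $a_1(t)^n\mathcal{R}_{j',n}$, not $a_2(t)^n\mathcal{R}_{j',n}$. This matters: it is precisely because $\deg a_1\le 1$ (hence $a_1'(t)=\varepsilon_{a_1}\in K$) that the commutation produces a clean constant shift at each step; with $a_2$ in that slot the argument would not close. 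The paper also makes the ``peeling'' step completely explicit, via the identity $\mathcal{E}_{a,b_{j'}}=\mathcal{E}_{a,b_j}+\gamma_{j',j}/a_1(t)$ (since $(b_{j'}-b_j)/a=\gamma_{j',j}/a_1$), which lets one write
\[
a_1(t)^n\mathcal{R}_{j',n}=\tfrac1{n!}\,a_1(t)^n\bigl(\mathcal{E}_{a,b_j}+\gamma_{j',j}/a_1(t)\bigr)^n a_1(t)^n
\]
and then move one factor of $a_1(t)$ across at a time, each move yielding $\mathcal{E}_{a,b_j}a_1(t)\cdot(\text{operator})+(\gamma_{j',j}-k\varepsilon_{a_1})\cdot(\text{lower power})$; the first summand lies in $\mathcal{E}_{a,b_j}a_1(t)\cdot K[t,\partial_t]$ and the iteration gives exactly $\tfrac1{n!}\prod_{k=1}^n(\gamma_{j',j}-k\varepsilon_{a_1})\,a_1(t)^n$ modulo that left ideal. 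Your sketch via $w_{j'}/w_j$ or via Lemma~\ref{decompose} would work too once the $a_1/a_2$ roles are corrected, but this direct operator identity is what makes the bookkeeping you flagged as the ``main obstacle'' transparent.
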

\begin{proof}
Since $(\mathfrak{R}_{j,u_j,h})_{j,u_j}$ is a weight $(n,\ldots,n)\in \N^{W}$ Pad\'{e}-type approximation of $(f_{j,u_j})_{j,u_j}$, we have ${\rm{ord}}_{\infty}\,\mathfrak{R}_{j,u_j,h}\ge n+1$ and 
the first equality is obtained by
\begin{align*}
\mathfrak{R}_{j,u_j,h}(z)&=\varphi_{j,u_j}\left(\dfrac{P_h(t)}{z-t}\right)
=\sum_{k=n}^{\infty}\dfrac{\varphi_{j,u_j}(t^kP_h(t))}{z^{k+1}}.
\end{align*}
We prove the second equality. Fix $j$ and put $\mathcal{E}_{a,b_{j'}}=\partial_t+b_{j'}(t)/a(t)$ for $1\le j'\le d$. 
Then,
\begin{align} \label{j' j}
\mathcal{E}_{a,b_{j'}}=\mathcal{E}_{a,b_j}+\dfrac{\gamma_{j',j}}{a_1(t)},
\end{align}
and
$\mathcal{R}_{j',n}=(1/n!)\mathcal{E}^n_{a,b_{j'}}a_1(t)^n$.
Using Proposition $\ref{key 1 2}$ $(i)$, there is a set $\{c_{j,l}\mid l=0,1,\ldots ,n\}$ of integers with $c_{j,n}=(-1)^nn!$ and $$t^n\mathcal{R}_{j,n}=\sum_{l=0}^n\dfrac{c_{j,l}}{n!}\mathcal{E}^{n-l}_{a,b_j}t^{n-l}a_1(t)^n.$$
Note, by the Leibniz formula, the polynomial $\prod_{j'\neq j}\mathcal{R}_{j',n}\cdot [t^h{a}_2(t)^{dn}]$ is contained in the ideal $({a}_2(t)^n)$.
By Corollary $\ref{im cor}$ $(i)$, $$\mathcal{E}^{n-l}_{a,b_j} a_1(t)^n \cdot ({a}_2(t)^n)\subseteq {\rm{ker}}\,\varphi_{j,u_j} \ \ \text{for} \ \ 0\le l\le n-1$$
and thus,
\begin{align}
t^nP_h(t)&=t^n \mathcal{R}_{j,n}\prod_{j'\neq j}\mathcal{R}_{j',n}\cdot [t^h{a}_2(t)^{dn}] \nonumber \\
&=\sum_{l=0}^n\dfrac{c_{j,l}}{n!}\mathcal{E}^{n-l}_{a,b_j}t^{n-l}a_1(t)^n\prod_{j'\neq j}\mathcal{R}_{j',n}\cdot [t^ha_2(t)^{dn}]\nonumber \\
&\equiv (-1)^n a_1(t)^n\prod_{j'\neq j}\mathcal{R}_{j',n}\cdot [t^ha_2(t)^{dn}] \ \ \text{mod} \ \  {\rm{ker}}\,\varphi_{j,u_j}. \label{t^k 1}
\end{align}
Equation $(\ref{j' j})$ yields 
\begin{align}
a_1(t)^n\mathcal{R}_{j',n}&=\dfrac{a_1(t)^n}{n!}\left(\mathcal{E}_{a,b_j}+\dfrac{\gamma_{j',j}}{a_1(t)}\right)^na_1(t)^n \nonumber \\
&=\dfrac{1}{n!}\left(\mathcal{E}_{a,b_j}a_1(t)^n+(\gamma_{j',j}-n\varepsilon_{a_1})a_1(t)^{n-1}\right)\left(\mathcal{E}_{a,b_j}+\dfrac{\gamma_{j',j}}{a_1(t)}\right)^{n-1}a_1(t)^n \label{second!}\\
&\equiv\dfrac{1}{n!}(\gamma_{j',j}-n\varepsilon_{a_1})a_1(t)^{n-1}\left(\mathcal{E}_{a,b_j}+\dfrac{\gamma_{j',j}}{a_1(t)}\right)^{n-1}a_1(t)^n \ \ \text{mod} \ \ \{\mathcal{E}_{a,b_{j}}a_1(t)\cdot K[t,\partial_t]\} \nonumber\\
&\equiv \dfrac{1}{n!}\prod_{k=1}^n(\gamma_{j',j}-k\varepsilon_{a_1})a_1(t)^n \ \ \text{mod} \ \ \{\mathcal{E}_{a,b_{j}}a_1(t)\cdot K[t,\partial_t]\}. \nonumber
\end{align}
Note that we use the assumption ${\rm{deg}}\, a_1\le 1$ {{and the equality $\partial_t\cdot a(t)=\varepsilon_{a_1}$}} in Equation \eqref{second!}. 
Combining above equality and Equation $(\ref{t^k 1})$ yields 
$$\varphi_{j,u_j}(t^nP_h(t))=\dfrac{(-1)^n}{(n!)^{d-1}}\prod_{\substack{1\le j'\le d \\ j'\neq j}}\left[\prod_{k=1}^n(\gamma_{j',j}-k\varepsilon_{a_1})\right]\varphi_{j,u_j}(t^ha_1(t)^n\cdot a_2(t)^{dn}).$$
This completes the proof of Lemma $\ref{coeff n+1}$. 
\end{proof}
For a nonnegative integer $n$, we now consider the determinant of following $(W+1)\times (W+1)$ matrix:
\begin{align*}
&\Delta_n(z)=\Delta(z)={\rm{det}}
\begin{pmatrix}
P_{0}(z) & P_1(z) & \ldots & P_{W}(z)\\
Q_{1,0,0}(z) & Q_{1,0,1}(z) & \ldots & Q_{1,0,W}(z)\\
\vdots & \vdots & \ddots & \vdots\\
Q_{1,w_1,0}(z) & Q_{1,w_1,1}(z) & \ldots & Q_{1,w_1,W}(z)\\
\vdots & \vdots & \ddots & \vdots\\
Q_{d,0,0}(z) & Q_{d,0,1}(z) & \ldots & Q_{d,0,W}(z)\\
\vdots & \vdots & \ddots & \vdots\\
Q_{d,w_d,0}(z) & Q_{d,w_d,1}(z) & \ldots & Q_{d,w_d,W}(z)
\end{pmatrix}.
\end{align*}
Notice that the determinant $\Delta(z)$ is a polynomial.

To compute $\Delta(z)$, we define the determinant of following $W\times W$ matrix:
\begin{align*}
&\Theta_n=\Theta={\rm{det}}
\begin{pmatrix}
\varphi_{1,0}(a_1(t)^na_2(t)^{dn}) & \varphi_{1,0}(ta_1(t)^na_2(t)^{dn}) & \ldots & \varphi_{1,0}(t^{W-1}a_1(t)^na_2(t)^{dn})\\
\vdots & \vdots & \ddots & \vdots\\
\varphi_{1,w_1}(a_1(t)^na_2(t)^{dn}) & \varphi_{1,w_1}(ta_1(t)^na_2(t)^{dn}) & \ldots & \varphi_{1,w_1}(t^{W-1}a_1(t)^na_2(t)^{dn})\\
\vdots & \vdots & \ddots & \vdots\\
\varphi_{d,0}(a_1(t)^na_2(t)^{dn}) & \varphi_{d,0}(ta_1(t)^na_2(t)^{dn}) & \ldots & \varphi_{d,0}(t^{W-1}a_1(t)^na_2(t)^{dn})\\
\vdots & \vdots & \ddots & \vdots\\
\varphi_{d,w_d}(a_1(t)^na_2(t)^{dn}) & \varphi_{d,w_d}(ta_1(t)^na_2(t)^{dn}) & \ldots & \varphi_{d,w_d}(t^{W-1}a_1(t)^na_2(t)^{dn})
\end{pmatrix}.
\end{align*}
{{Notice that $\Theta\in K$.}}
{{Replace the coefficient of $z^{(n+1)W}$ of the polynomial $P_W$ by $p_{W}$, that is, $$p_W=\dfrac{1}{[(n+1)W]!}\partial^{(n+1)W}_z\cdot P_{W}(z).$$}}
Then we have the following proposition.
\begin{proposition} \label{det}
{{$\Delta(z)\in K$. More precisely,}}
$$\Delta(z)=\left(\dfrac{{{-1}}}{(n!)^{d-1}}\right)^W p_W\cdot \prod_{j=1}^d\left[\prod_{\substack{1\le j' \le d \\ j'\neq j}}\prod_{k=1}^n(\gamma_{j',j}-k\varepsilon_{a_1})\right]^{{{w_j+1}}}\cdot \Theta,$$
where $\varepsilon_{a_1}$ is the real number defined in Lemma $\ref{coeff n+1}$.
\end{proposition}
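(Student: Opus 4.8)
The plan is to evaluate $\Delta(z)$ by exploiting the Pad\'e-type approximation property established in Theorem~\ref{main general}, which forces $\Delta(z)$ to vanish to high order at $z=\infty$, together with a degree count that forces $\Delta(z)$ to be a polynomial of controlled degree; comparing the two shows $\Delta(z)$ is a constant, and then Lemma~\ref{coeff n+1} identifies that constant. First I would recall that each remainder $\mathfrak{R}_{j,u_j,h}(z) = P_h(z)f_{j,u_j}(z) - Q_{j,u_j,h}(z)$ satisfies ${\rm{ord}}_\infty\,\mathfrak{R}_{j,u_j,h}\ge n+1$. The key algebraic manipulation is to replace, in the determinant $\Delta(z)$, each row $(Q_{j,u_j,0}(z),\ldots,Q_{j,u_j,W}(z))$ by the row $(\mathfrak{R}_{j,u_j,0}(z),\ldots,\mathfrak{R}_{j,u_j,W}(z))$; since $\mathfrak{R}_{j,u_j,h} = P_h f_{j,u_j} - Q_{j,u_j,h}$ and this is the first row scaled by $f_{j,u_j}(z)$ minus the $Q$-row, the determinant is unchanged. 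Thus $\Delta(z)$ equals the determinant of the matrix whose first row is $(P_0(z),\ldots,P_W(z))$ and whose remaining $W$ rows are $(\mathfrak{R}_{j,u_j,h}(z))_h$.

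Next I would expand this new determinant along the first row: $\Delta(z) = \sum_{h=0}^W (-1)^h P_h(z) M_h(z)$, where $M_h(z)$ is the $W\times W$ minor obtained by deleting the first row and the $h$-th column; each $M_h(z)$ is a determinant of entries of the form $\mathfrak{R}_{j,u_j,h'}(z)$, hence $M_h(z) \in (1/z^?)\cdot K[[1/z]]$ with ${\rm{ord}}_\infty\,M_h \ge W(n+1)$, because every one of the $W$ rows contributes a factor of order at least $n+1$. Since $P_h(z)$ has degree at most $(n+1)W$ (it is obtained by applying $\prod_j R_{j,n}$, which lowers degree appropriately, to $z^h a_2(z)^{dn}$), the product $P_h(z)M_h(z)$ has order at $\infty$ bounded below by $W(n+1) - (n+1)W = 0$; more carefully, only the top-degree term of $P_h(z)$ can contribute to the constant term of $\Delta(z)$, and all strictly negative powers of $z$ cancel because $\Delta(z)$ is a polynomial (being a determinant of polynomials) while simultaneously having nonnegative order at infinity from the remainder expansion — so $\Delta(z)$ is a constant. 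To pin down the constant, I extract the coefficient of $z^0$: only the leading coefficient $p_W$ of $P_W(z)$ (the unique $P_h$ of maximal degree, $h=W$) paired with the coefficient of $z^{-W(n+1)}$ in $M_W(z)$ survives, and that latter coefficient is, by multilinearity and Lemma~\ref{coeff n+1}, the determinant of the matrix of leading remainder coefficients $\varphi_{j,u_j}(t^n P_{h'}(t))$ for $0\le h' \le W-1$.

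The final step is bookkeeping: Lemma~\ref{coeff n+1} gives $\varphi_{j,u_j}(t^nP_{h'}(t)) = \frac{(-1)^n}{(n!)^{d-1}}\bigl[\prod_{j'\neq j}\prod_{k=1}^n(\gamma_{j',j}-k\varepsilon_{a_1})\bigr]\,\varphi_{j,u_j}(t^{h'}a_1(t)^n a_2(t)^{dn})$, and the scalar factor in front depends only on the row index $j$ (not on the column index $h'$, nor on $u_j$). Pulling these row factors out of the $W\times W$ determinant — there are $w_j+1$ rows carrying the $j$-th factor — yields exactly $\bigl(\frac{-1}{(n!)^{d-1}}\bigr)^W \prod_{j=1}^d\bigl[\prod_{j'\neq j}\prod_{k=1}^n(\gamma_{j',j}-k\varepsilon_{a_1})\bigr]^{w_j+1}$ times the determinant $\Theta$ of the matrix with entries $\varphi_{j,u_j}(t^{h'}a_1(t)^n a_2(t)^{dn})$, $0\le h'\le W-1$. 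Multiplying by the overall $p_W$ from the first row gives the claimed formula. The main obstacle I anticipate is the careful order-of-vanishing and degree argument needed to justify that all negative powers of $z$ in $\Delta(z)$ cancel and that precisely the $(P_W, M_W)$ term contributes to the constant — in particular verifying ${\rm{deg}}\,P_h \le (n+1)W$ with equality exactly for $h=W$, and tracking the sign $(-1)^W$ coming from the column expansion together with the row permutation that restores the natural ordering of the minor $M_W$. The rest is multilinear algebra that follows routinely once Lemma~\ref{coeff n+1} is in hand.
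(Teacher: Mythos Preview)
Your approach is essentially identical to the paper's: replace the $Q$-rows by the remainder rows $\mathfrak{R}_{j,u_j,h}$, expand along the first row, use the degree bound $\deg P_h\le nW+h$ together with ${\rm ord}_\infty\,\mathfrak{R}_{j,u_j,h}\ge n+1$ to conclude $\Delta(z)$ is constant, and then read off that constant via Lemma~\ref{coeff n+1}. One small correction: replacing a row $Q$ by $\mathfrak{R}=fP-Q$ is \emph{not} determinant-preserving (it negates the row and then adds a multiple of the first row), so this step contributes a factor $(-1)^W$, exactly as in the paper's display before Equation~\eqref{formal power series rep delta}; once you fix this sign the bookkeeping goes through.
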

\begin{proof}
First, by the definition of $P_{l}(z)$, we have 
\begin{align} \label{deg Pl}
{\rm{deg}}\,P_{l}\le nW+l.
\end{align}
For the matrix in the definition of $\Delta(z)$, for $1\le j \le d, 0\le u_j\le w_j$, adding $-f_{j,u_j}(z)$ times first row to $(w_1+\cdots+w_{j-1})+u_j+1$ th row, 
                     $$ 
                     \Delta(z)=(-1)^{W}{\rm{det}}
                     {\begin{pmatrix}
                     P_{0}(z) & \dots &P_{W}(z)\\
                     \mathfrak{R}_{1,0,0}(z) & \dots & \mathfrak{R}_{1,0,W}(z)\\
                     \vdots    & \ddots & \vdots  \\
                     \mathfrak{R}_{1,w_1,0}(z) & \dots & \mathfrak{R}_{1,w_1,W}(z)\\
                     \vdots    & \ddots & \vdots  \\
                     \mathfrak{R}_{d,0,0}(z) & \dots & \mathfrak{R}_{d,0,W}(z)\\
                     \vdots    & \ddots & \vdots  \\
                     \mathfrak{R}_{d,w_d,0}(z) & \dots & \mathfrak{R}_{d,w_d,W}(z)\\
                     \end{pmatrix}}. 
                     $$ 
We denote the $(s,t)$ th cofactor of the matrix in the right hand side of above equality by $\Delta_{s,t}(z)$.
Then we have, developing along the first row 
\begin{align} \label{formal power series rep delta}
\Delta(z)=(-1)^{W}\left(\sum_{l=0}^{{{W}}}P_{l}(z)\Delta_{1,l+1}(z)\right).
\end{align} 
Since 
$${\rm{ord}}_{\infty}\, {{\mathfrak{R}}}_{l,h}(z)\ge n+1\ \ \text{for} \ \ 1\le j \le d, 0\le u_j \le w_j, 0\le h \le W,$$
$$
{\rm{ord}}_{\infty}\,\Delta_{1,l+1}(z)\ge (n+1)W \ \ \text{for} \ \ 0\le l \le W.
$$
Combining Equation $(\ref{deg Pl})$ and above inequality yields
$$P_{l}(z)\Delta_{1,l+1}(z)\in (1/z)\cdot K[[1/z]] \ \ \text{for} \ \ 0\le l \le W-1,$$
and 
$$P_{W}(z)\Delta_{1,{{W+1}}}(z)\in K[[1/z]].$$
Note that in above relation, the constant term of $P_{W}(z)\Delta_{1,{{W+1}}}(z)$ is 
\begin{equation} \label{what const}
{{p_W}} \cdot ``\text{Coefficient of} \ 1/z^{(n+1)W} \ \text{of} \ \Delta_{1,{{W+1}}}(z)\text{''}.
\end{equation}  
Equation $(\ref{formal power series rep delta})$ implies $\Delta(z)$ is a polynomial in $z$ with nonpositive valuation with respect to ${\rm{ord}}_{\infty}$. Thus, it has to be a constant.
At last, by Lemma $\ref{coeff n+1}$, the coefficient of $1/z^{(n+1)W}$ of  $\Delta_{1,{{W+1}}}(z)$ is 
{\small{\begin{align*}
{\rm{det}}
\begin{pmatrix}
(-1)^n\varphi_{1,0}(t^nP_0(t)) & \ldots & (-1)^n\varphi_{1,0}(t^{n}P_{W-1}(t)) \\
\vdots & \ddots & \vdots\\
(-1)^n\varphi_{1,w_1}(t^nP_0(t)) & \ldots & (-1)^n\varphi_{1,w_1}(t^{n}P_{W-1}(t)) \\
\vdots & \ddots & \vdots\\
(-1)^n\varphi_{d,0}(t^nP_0(t)) & \ldots & (-1)^n\varphi_{d,0}(t^{n}P_{W-1}(t)) \\
\vdots & \ddots & \vdots\\
(-1)^n\varphi_{d,w_d}(t^nP_0(t)) & \ldots & (-1)^n\varphi_{d,w_d}(t^{n}P_{W-1}(t))  
\end{pmatrix}
=\left(\dfrac{{{1}}}{(n!)^{d-1}}\right)^W\prod_{j=1}^d\left[\prod_{\substack{1\le j' \le d \\ j'\neq j}}\prod_{k=1}^n(\gamma_{j',j}-k\varepsilon_{a_1})\right]^{{w_j+1}}\cdot \Theta.
\end{align*}}}
Combining Equations. $(\ref{formal power series rep delta})$, $(\ref{what const})$ and above equality yields the assertion.
This completes the proof of Proposition $\ref{det}$.
\end{proof} 

\section{Examples}
In this section, let us describe some examples of the application of Theorem $\ref{main general}$ and Proposition $\ref{det}$.
\begin{example} \label{Chebyshev}
Let us give a generalization of the Chevyshev polynomials (see \cite[$5.1$]{Spe-func}).
Let $u\ge 2$ be an integer. Put $D=-(z^u-1)\partial_z-z^{u-1}\in K[z,\partial_z]$. The Laurent series
$$f_l(z)=\sum_{k=0}^{\infty}\dfrac{\left(\tfrac{1+l}{u}\right)_k}{\left(\tfrac{u+l}{u}\right)_k}\dfrac{1}{z^{uk+l+1}}=\dfrac{1}{z^{l+1}}\cdot {}_2F_1\left(\frac{1+l}{u},1,\left.\frac{u+l}{u}\right|\frac{1}{z^u}\right) \ \ \text{for} \ \ 0\le l \le u-2$$ 
are linearly independent over $K$ and satisfy $D\cdot f_l(z)\in K[z]$. Note that $f_0(z)=(z^u-1)^{-1/u}$. We denote $\varphi_{f_l}=\varphi_l$.
For $h,n\in \N$ with $0\le h \le u-1$, we define
\begin{align*}
&P_{n,h}(z)=P_h(z)=\dfrac{1}{n!} \left(\partial_z-\dfrac{z^{u-1}}{z^u-1}\right)^n(z^u-1)^n \cdot z^h,\\
&Q_{n,l,h}(z)=Q_{l,h}(z)=\varphi_{l}\left(\dfrac{P_{h}(z)-P_h(t)}{z-t}\right) \ \ \text{for} \ \ 0\le l \le u-2.
\end{align*}
Theorem $\ref{main general}$ yields that the vector of polynomials $(P_h,Q_{j,h})_{0\le j \le u-2}$ is a weight $(n,\ldots,n)\in \N^{u-1}$ Pad\'{e}-type approximants of $(f_0,\ldots,f_{u-2})$.
Define
$$\Delta_n(z)=
{\rm{det}}\begin{pmatrix}
P_0(z) & \cdots & P_{u-1}(z)\\
Q_{0,0}(z) & \cdots & Q_{0,u-1}(z)\\
\vdots & \ddots & \vdots \\
Q_{u-2,0}(z) & \cdots & Q_{u-2,u-1}(z)
\end{pmatrix}.
$$
The determinant $\Delta_n(z)$ will be computed in Lemma $\ref{non vanish uN}$.
\end{example}

\begin{example} \label{1}
In this example, we give a generalization of the Bessel polynomials (see \cite{Bessel}).
Let $d,n$ be nonnegative integers and $\gamma_1,\ldots,\gamma_d\in K$ that are not integers less than $-1$ with $$\gamma_{j_2}-\gamma_{j_1}\notin \Z \ \ \text{for} \ \ 1\le j_1<j_2\le d.$$ 
Put $D_j=-z^2\partial_z+\gamma_j z-1$, 
$$f_j(z)=\sum_{k=0}^{\infty}\dfrac{1}{(2+\gamma_j)_k}\dfrac{1}{z^{k+1}}$$ and $\varphi_{f_j}=\varphi_j$.
A straightforward computation yields $D_j\cdot f_j(z)\in K$.
Put $$R_{j,n}=\dfrac{1}{n!}\left(\partial_z+\dfrac{\gamma_jz-1}{z^2}\right)^nz^n.$$  
Lemma $\ref{commute 0}$ yields $$R_{j_1,n_1}R_{j_2,n_2}=R_{j_2,n_2}R_{j_1,n_1} \ \ \text{for} \ \ 1\le j_1,j_2 \le d \ \ \text{and} \ \ n_{j_1},n_{j_2}\in \N.$$
For $h\in \Z$ with $0\le h \le d$, we define 
\begin{align*}
&P_{n,h}(z)=P_h(z)=\prod_{j=1}^dR_{j,n}\cdot z^{dn+h},\\
&Q_{n,j}(z)=Q_{j}(z)=\varphi_{j}\left(\dfrac{P_h(z)-P_h(t)}{z-t}\right) \ \ \text{for} \ \ 1\le j \le d.
\end{align*}
Then Theorem $\ref{main general}$ yields that the vector of polynomials
$(P_h,Q_{j,h})_{1\le j \le d}$ is a weight $(n,\ldots,n)\in \N^d$ Pad\'{e}-type approximants of $(f_1,\ldots,f_d)$.
By the definition of $P_d(z)$, we have 
\begin{align} \label{Pd 1}
P_d(z)=\dfrac{\prod_{j=1}^d {{(d(n+1)+\gamma_j+1)_n}}}{(n!)^d}z^{d(n+1)}+{(\rm{lower \ degree \ terms})}.
\end{align} 

Define
\begin{align*}
&\Delta_n(z)={\rm{det}}
\begin{pmatrix}
P_{0}(z) & P_1(z) & \ldots & P_{d}(z)\\
Q_{1,0}(z) & Q_{1,1}(z) & \ldots & Q_{1,d}(z)\\
\vdots & \vdots & \ddots & \vdots\\
Q_{d,0}(z) & Q_{d,1}(z) & \ldots & Q_{d,d}(z)\\
\end{pmatrix}, \ \ 
\Theta_n={\rm{det}}
\begin{pmatrix}
\varphi_{1}(t^{(d+1)n}) &  \ldots & \varphi_{1}(t^{(d+1)n+d-1})\\
\vdots & \ddots & \vdots\\
\varphi_{d}(t^{(d+1)n}) &  \ldots & \varphi_{d}(t^{(d+1)n+d-1})
\end{pmatrix}.
\end{align*}
{{Let us compute $\Theta_n$. By the definition of $\varphi_j$ and the properties of determinants, we have
\begin{align*}
\Theta_n&=
{\rm{det}}
\begin{pmatrix}
\tfrac{1}{(2+\gamma_1)_{(d+1)n}} &  \ldots & \tfrac{1}{(2+\gamma_1)_{(d+1)n+d-1}}\\
\vdots & \ddots & \vdots\\
\tfrac{1}{(2+\gamma_d)_{(d+1)n}} &  \ldots & \tfrac{1}{(2+\gamma_d)_{(d+1)n+d-1}}
\end{pmatrix}\\
&=\prod_{j=1}^d\dfrac{1}{(2+\gamma_j)_{(d+1)n+d-1}}\cdot
{\rm{det}}
\begin{pmatrix}
(2+\gamma_1+(d+1)n)_{d-1}&  \ldots & (2+\gamma_1+(d+1)n)_{0}\\
\vdots & \ddots & \vdots\\
(2+\gamma_d+(d+1)n)_{d-1} &  \ldots & (2+\gamma_d+(d+1)n)_{0}
\end{pmatrix}.
\end{align*}
Here, by using the properties of determinant again, 
{\footnotesize{\begin{align*}
{\rm{det}}
\begin{pmatrix}
(2+\gamma_1+(d+1)n)_{d-1}&  \ldots & (2+\gamma_1+(d+1)n)_{0}\\
\vdots & \ddots & \vdots\\
(2+\gamma_d+(d+1)n)_{d-1} &  \ldots & (2+\gamma_d+(d+1)n)_{0}
\end{pmatrix}
&=
(-1)^{\tfrac{(d-1)d}{2}} 
{\rm{det}}
\begin{pmatrix}
(2+\gamma_1+(d+1)n)_{0} &  \ldots & (2+\gamma_1+(d+1)n)_{d-1}\\
\vdots & \ddots & \vdots \\
(2+\gamma_d+(d+1)n)_{0} &  \ldots &(2+\gamma_d+(d+1)n)_{d-1}
\end{pmatrix}\\
&=(-1)^{\tfrac{(d-1)d}{2}} 
{\rm{det}}
\begin{pmatrix}
1&  \gamma_1 & \ldots &\gamma^{d-1}_1\\
\vdots & \ddots & \vdots\\
1&  \gamma_d & \ldots &\gamma^{d-1}_d
\end{pmatrix}.
\end{align*}}}
Since the last determinant is a Vandermonde determinant,}}
$$\Theta_n=\prod_{j=1}^d\dfrac{1}{(2+\gamma_j)_{(d+1)n+d-1}}\cdot {{(-1)^{\tfrac{(d-1)d}{2}}}} \prod_{1\le j_1<j_2\le d}(\gamma_{j_2}-\gamma_{j_1}).$$
Proposition $\ref{det}$ and Equation $(\ref{Pd 1})$ imply that
{\small{
\begin{align*}
\Delta_n(z)={{(-1)^{\tfrac{(d-1)d}{2}}}}\left(\dfrac{{{-1}}}{(n!)^{d}}\right)^d\cdot \prod_{j=1}^d\left[\prod_{\substack{1\le j' \le d \\ j'\neq j}}\prod_{k=1}^n(\gamma_{j'}-\gamma_j-k)\right]\cdot \prod_{j=1}^d\dfrac{{{(d(n+1)+\gamma_j+1)_n}}}{(2+\gamma_j)_{(d+1)n+d-1}}
\cdot  \prod_{1\le j_1<j_2\le d}(\gamma_{j_2}-\gamma_{j_1}).
\end{align*}
}}
Especially we have $\Delta_n(z)\in K\setminus\{0\}$.
\end{example}

\begin{example} \label{2}
In this example, we give a generalization of the Laguerre polynomials (see \cite[$6.2$]{Spe-func}).
Let $d,n\in \N$, $\gamma_1,\ldots,\gamma_d\in K\setminus\{0\}$ be pairwise distinct and $\delta\in K$ be a nonnegative integer. 
Put $D_{j}=-z\partial_z-\gamma_jz+\delta$,
$$f_{j}(z)={\displaystyle\sum_{k=0}^{\infty}}(1+\delta)_k\left(\dfrac{1}{\gamma_jz}\right)^{k+1}$$
and $\varphi_{f_{j}}=\varphi_{j}$.
A straightforward computation shows $D_{j}\cdot f_{j}(z)\in K$.
Put $$R_{j,n}=\dfrac{1}{n!}\left(\partial_z-\dfrac{\gamma_j z-\delta}{z}\right)^{n}.$$ 
By Lemma $\ref{commute 0}$, we have $$R_{j_1,n_{j_1}}R_{j_2,n_{j_2}}=R_{j_2,n_{j_2}}R_{j_1,n_{j_1}} \ \ \text{for} \ \ 1\le j_1,j_2\le d, \ \ n_{j_1},n_{j_2}\in \N.$$
For $h\in \Z$ with $0\le h \le d$, we define 
\begin{align*}
&P_{n,h}(z)=P_h(z)=\prod_{j=1}^dR_{j,n}\cdot z^{dn+h},\\
&Q_{n,j}(z)=Q_{j}(z)=\varphi_{j}\left(\dfrac{P_h(z)-P_h(t)}{z-t}\right) \ \ \text{for} \ \ 1\le j \le d.
\end{align*}
Then Theorem $\ref{main general}$ yields that the vector of polynomials $(P_h,Q_{j,h})_{1\le j \le d}$ is a weight $(n,\ldots,n)\in \N^d$ Pad\'{e}-type approximants of $(f_{j})_{\substack{1\le j \le d}}$.
By the definition of $P_d(z)$, we have 
\begin{align} \label{Pd 2}
P_d(z)=\dfrac{\prod_{j=1}^d\gamma^n_j}{(n!)^d}z^{d(n+1)}+{(\rm{lower \ degree \ terms})}.
\end{align}

Define
\begin{align*}
&\Delta_n(z)={\rm{det}}
\begin{pmatrix}
P_{0}(z) & P_1(z) & \ldots & P_{d}(z)\\
Q_{1,0}(z) & Q_{1,1}(z) & \ldots & Q_{1,d}(z)\\
\vdots & \vdots & \ddots & \vdots\\
Q_{d,0}(z) & Q_{d,1}(z) & \ldots & Q_{d,d}(z)\\
\end{pmatrix}, \ \
\Theta_n={\rm{det}}
\begin{pmatrix}
\varphi_{1}(t^{dn}) &  \ldots & \varphi_{1}(t^{d(n+1)-1})\\
\vdots & \ddots & \vdots\\
\varphi_{d}(t^{dn}) &  \ldots & \varphi_{d}(t^{d(n+1)-1})
\end{pmatrix}.
\end{align*}
{{We now compute $\Theta_n$. 
By the definition of $\varphi_j$ and the properties of determinant, 
\begin{align*}
\Theta_n=
{\rm{det}}
\begin{pmatrix}
\tfrac{(1+\delta)_{dn}}{\gamma^{dn+1}_1} &  \ldots & \tfrac{(1+\delta)_{d(n+1)-1}}{\gamma^{d(n+1)}_1}\\
\vdots  & \ddots & \vdots\\
\tfrac{(1+\delta)_{dn}}{\gamma^{dn+1}_d} &  \ldots & \tfrac{(1+\delta)_{d(n+1)-1}}{\gamma^{d(n+1)}_d}
\end{pmatrix}
=\prod_{j=1}^d\dfrac{(1+\delta)_{dn+j-1}}{\gamma^{d(n+1)}_j}\cdot
{\rm{det}} \begin{pmatrix}
1 &  \gamma_1 & \ldots & \gamma^{d-1}_1\\
\vdots & \vdots & \ddots & \vdots\\
1 &  \gamma_d & \ldots & \gamma^{d-1}_d\\
\end{pmatrix}.
\end{align*}
Since the last determinant is nothing but a Vandermonde determinant,}} $$\Theta_n=\prod_{j=1}^d\dfrac{(1+\delta)_{dn+j-1}}{\gamma^{d(n+1)}_j}\cdot \prod_{1\le j_1<j_2\le d}(\gamma_{j_2}-\gamma_{j_1}).$$
Proposition $\ref{det}$ and Equation $(\ref{Pd 2})$ imply that
\begin{align*}
\Delta_n(z)=\left(\dfrac{{{-1}}}{(n!)^{d}}\right)^d\cdot \prod_{j=1}^d\left[\prod_{\substack{1\le j' \le d \\ j'\neq j}}(\gamma_{j'}-\gamma_j)^n\right]\cdot
\prod_{j=1}^d\dfrac{(1+\delta)_{dn+j-1}}{\gamma^{(d-1)n+d}_j}\cdot \prod_{1\le j_1<j_2\le d}(\gamma_{j_2}-\gamma_{j_1})\in K\setminus \{0\}.
\end{align*}
\end{example}

\begin{example} \label{3}
Let us give an alternative generalization of the Laguerre polynomials.
Let $d,n\in \N$, $\gamma\in  K\setminus\{0\}$, and $\delta_1,\ldots,\delta_d\in K$ be nonnegative integers with 
$$\delta_{j_1}-\delta_{j_2}\notin \Z \ \ \text{for} \ \ 1\le j_1<j_2\le d.$$ 
Put $D_{j}=-z\partial_z-\gamma z+\delta_j$,
$$f_{j}(z)={\displaystyle\sum_{k=0}^{\infty}}(1+\delta_j)_k\left(\dfrac{1}{\gamma z}\right)^{k+1},$$
and $\varphi_{f_{j}}=\varphi_{j}$.
Then we have $D_{j}\cdot f_{j}(z)\in K$.
Put $$R_{j,n}=\dfrac{1}{n!}\left(\partial_z-\dfrac{\gamma z-\delta_j}{z}\right)^{n}z^n.$$ 
By Lemma $\ref{commute 0}$, we have $$R_{j_1,n_{j_1}}R_{j_2,n_{j_2}}=R_{j_2,n_{j_2}}R_{j_1,n_{j_1}} \ \ \text{for} \ \ 1\le j_1,j_2\le d, \ \ n_{j_1},n_{j_2}\in \N.$$
For $h\in \Z$ with $0\le h \le d$, we define 
\begin{align*}
&P_{n,h}(z)=P_h(z)=\prod_{j=1}^dR_{j,n}\cdot z^h,\\
&Q_{n,j}(z)=Q_{j}(z)=\varphi_{j}\left(\dfrac{P_h(z)-P_h(t)}{z-t}\right) \ \ \text{for} \ \ 1\le j \le d.
\end{align*}
Then Theorem $\ref{main general}$ yields that the vector of polynomials $(P_h,Q_{j,h})_{1\le j \le d}$ is a weight $(n,\ldots,n)\in \N^d$ Pad\'{e}-type approximants of $(f_{j})_{\substack{1\le j \le d}}$.
By the definition of $P_d(z)$, 
\begin{align} \label{Pd 3}
P_d(z)=\dfrac{\gamma^{dn}}{(n!)^d}z^{d(n+1)}+{(\rm{lower \ degree \ terms})}.
\end{align}

Define
\begin{align*}
&\Delta_n(z)={\rm{det}}
\begin{pmatrix}
P_{0}(z) & P_1(z) & \ldots & P_{d}(z)\\
Q_{1,0}(z) & Q_{1,1}(z) & \ldots & Q_{1,d}(z)\\
\vdots & \vdots & \ddots & \vdots\\
Q_{d,0}(z) & Q_{d,1}(z) & \ldots & Q_{d,d}(z)\\
\end{pmatrix}, \ \
\Theta_n={\rm{det}}
\begin{pmatrix}
\varphi_{1}(t^{n}) &  \ldots & \varphi_{1}(t^{d+n-1})\\
\vdots & \ddots & \vdots\\
\varphi_{d}(t^{n}) &  \ldots & \varphi_{d}(t^{d+n-1})
\end{pmatrix}.
\end{align*}
{{Let us compute $\Theta_n$.
By the definition of $\varphi_j$ and the properties of determinant, 
\begin{align*}
\Theta_n=
{\rm{det}}
\begin{pmatrix}
\tfrac{(1+\delta_1)_{n}}{\gamma^{n+1}} &  \ldots & \tfrac{(1+\delta_1)_{d+n-1}}{\gamma^{d+n}}\\
\vdots & \ddots & \vdots\\
\tfrac{(1+\delta_d)_{n}}{\gamma^{n+1}} &  \ldots & \tfrac{(1+\delta_d)_{d+n-1}}{\gamma^{d+n}}
\end{pmatrix}
=\prod_{j=1}^d\dfrac{(1+\delta_j)_{n}}{\gamma^{n+j}}
\cdot 
\begin{pmatrix}
(n+\delta_1)_0 &  \ldots & (n+\delta_1)_{d-1}\\
\vdots & \ddots & \vdots\\
(n+\delta_1)_0 &  \ldots & (n+\delta_d)_{d-1}
\end{pmatrix}.
\end{align*}
A similar computation as in Example $\ref{1}$ leads us to get}} $$\Theta_n=\prod_{j=1}^d\dfrac{(1+\delta_j)_{n}}{\gamma^{n+j}}\cdot \prod_{1\le j_1<j_2\le d}(\delta_{j_2}-\delta_{j_1}).$$
Proposition $\ref{det}$ and Equation $(\ref{Pd 3})$ imply that
\begin{align*}
\Delta_n(z)=\left(\dfrac{{{-1}}}{(n!)^{d}}\right)^d\cdot \prod_{j=1}^d\left[\prod_{\substack{1\le j' \le d \\ j'\neq j}}\prod_{k=1}^n(\delta_{j'}-\delta_j-k)\right]\cdot
\prod_{j=1}^d\dfrac{(1+\delta_j)_{n}}{\gamma^{j}}\cdot \prod_{1\le j_1<j_2\le d}(\delta_{j_2}-\delta_{j_1})
\in K\setminus \{0\}.
\end{align*}
\end{example}

\begin{example} \label{4}
In this example, we give a generalization of the Hermite polynomials  (see \cite[$6.1$]{Spe-func}). 
Let $d,n\in \N$, $\gamma\in K\setminus\{0\}$ and $\delta_1,\ldots,\delta_d\in K$ be pairwise distinct. Put $D_j=-\partial_z+\gamma z+\delta_j$,
$$f_j(z)={\displaystyle\sum_{k=0}^{\infty}}\dfrac{f_{j,k}}{z^{k+1}},$$ where $f_{j,0}=1$, $f_{j,1}=-\delta_j/\gamma$ and 
\begin{align} \label{recurrence}
f_{j,k+2}=-\dfrac{\delta_jf_{j,k+1}+(k+1)f_{j,k}}{\gamma} \ \ \text{for} \ \ k\ge 0,
\end{align}
and $\varphi_{f_j}=\varphi_j$. 
Then we have $D_j\cdot f_{j}(z)\in K$.
Put $$R_{j,n}=\dfrac{1}{n!}(\partial_z+\gamma z+\delta_j)^{n}.$$
By Lemma $\ref{commute 0}$,  $$R_{j_1,n_{j_1}}R_{j_2,n_{j_2}}=R_{j_2,n_{j_2}}R_{j_1,n_{j_1}} \ \ \text{for} \ \ 1\le j_1,j_2\le d, \ \ n_{j_1},n_{j_2}\in \N.$$
For $h\in \Z$ with $0\le h \le d$, we define
\begin{align*}
&P_{n,h}(z)=P_h(z)=\prod_{j=1}^dR_{j,n}\cdot z^h,\\
&Q_{n,j,h}(z)=Q_{j,h}(z)=\varphi_{j}\left(\dfrac{P_h(z)-P_h(t)}{z-t}\right) \ \ \text{for} \ \ 1\le j \le d.
\end{align*}
Then Theorem $\ref{main general}$ yields that the vector of polynomials $(P_h,Q_{j,h})_{\substack{1\le j \le d}}$ is a weight $(n,\ldots,n)\in \N^d$ Pad\'{e}-type approximants of $(f_{j})_{1\le j \le d}$.
By the definition of $P_d(z)$, 
\begin{align} \label{Pd 4}
P_d(z)=\dfrac{\gamma^{dn}}{(n!)^d}z^{d(n+1)}+{(\rm{lower \ degree \ terms})}.
\end{align}

Define
\begin{align*}
\Delta_n(z)={\rm{det}}
\begin{pmatrix}
P_{0}(z) & P_1(z) & \ldots & P_{d}(z)\\
Q_{1,0}(z) & Q_{1,1}(z) & \ldots & Q_{1,d}(z)\\
\vdots & \vdots & \ddots & \vdots\\
Q_{d,0}(z) & Q_{d,1}(z) & \ldots & Q_{d,d}(z)\\
\end{pmatrix}, \ \
\Theta_n={\rm{det}}
\begin{pmatrix}
\varphi_{1}(1) &  \ldots & \varphi_{1}(t^{d-1})\\
\vdots & \ddots & \vdots\\
\varphi_{d}(1) &  \ldots & \varphi_{d}(t^{d-1})
\end{pmatrix}.
\end{align*}
{{Let us compute $\Theta_n$. By the definition of $\varphi_j$, 
\begin{align}\label{by def Theta}
\Theta_n=
{\rm{det}}
\begin{pmatrix}
f_{1,0} &  f_{1,1}& \ldots & f_{1,d-1}\\
\vdots &\vdots & \ddots & \vdots\\
f_{d,0} & f_{d,1} & \ldots & f_{d,d-1}
\end{pmatrix}.
\end{align}
Here, using Equation $(\ref{recurrence})$ and the properties of determinant  repeatedly,
\begin{align}\label{comp f}
{\rm{det}}
\begin{pmatrix}
f_{1,0} &  f_{1,1}& \ldots & f_{1,d-1}\\ 
\vdots &\vdots & \ddots & \vdots\\
f_{d,0} & f_{d,1} & \ldots & f_{d,d-1}
\end{pmatrix}
=
{\rm{det}}
\begin{pmatrix}
1 &  \frac{-\delta_1}{\gamma}& \ldots & \left(\frac{-\delta_1}{\gamma}\right)^{d-1}\\
\vdots &\vdots & \ddots & \vdots\\
1 &  \frac{-\delta_d}{\gamma}& \ldots & \left(\frac{-\delta_d}{\gamma}\right)^{d-1}
\end{pmatrix}.
\end{align}
Combining Equations (\ref{by def Theta}) and (\ref{comp f}) implies}}
$$\Theta_n=\left(\dfrac{-1}{\gamma}\right)^{1+2+\cdots+(d-1)}\cdot \prod_{1\le j_1<j_2\le d}(\delta_{j_2}-\delta_{j_1}).$$
Proposition $\ref{det}$ and Equation $(\ref{Pd 4})$ imply that
{\small{
\begin{align*}
\Delta_n(z)=\left(\dfrac{{{-1}}}{(n!)^{d}}\right)^d\cdot \prod_{j=1}^d\left[\prod_{\substack{1\le j' \le d \\ j'\neq j}}(\delta_{j'}-\delta_j)^n\right]\cdot
(-1)^{\tfrac{(d-1)d}{2}}\gamma^{dn-\tfrac{(d-1)d}{2}}\cdot \prod_{1\le j_1<j_2\le d}(\delta_{j_2}-\delta_{j_1})
\in K\setminus \{0\}.
\end{align*}
}}
\end{example}

\begin{example} \label{6}
In this example, we consider a generalization of the Legendre polynomials  (see \cite[Remark $5.3.1$]{Spe-func}). 
Let $d,m,n\in \N$, $\alpha_{1},\ldots,\alpha_{m}\in K\setminus\{0\}$ be pairwise distinct and $\gamma_1,\ldots,\gamma_d\in K$ be nonnegative integers{{,}} satisfying $\gamma_{j_1}-\gamma_{j_2}\notin \Z$ for $1\le j_1<j_2\le d$. 
Put $a_2(z)=\prod_{i=1}^m(z-\alpha_i)$, $D_{j}=-za_2(z)\partial_z+\gamma_ja_2(z)$,
$$f_{i,j}(z)={\displaystyle\sum_{k=0}^{\infty}}\dfrac{1}{k+1+\gamma_j}\left(\dfrac{\alpha_{i}}{z}\right)^{k+1} \ \ \text{for} \ \ 1\le i \le m, \ \ 1\le j \le d,$$
and $\varphi_{f_{i,j}}=\varphi_{i,j}$.
Then we have $D_{j}\cdot f_{i,j}(z)\in K[z]$. 
Put $$R_{j,n}=\dfrac{1}{n!}\left(\partial_z+\dfrac{\gamma_j}{z}\right)^nz^n.$$
By Lemma $\ref{commute 0}$, we have $$R_{j_1,n_{j_1}}R_{j_2,n_{j_2}}=R_{j_2,n_{j_2}}R_{j_1,n_{j_1}} \ \ \text{for} \ \ 1\le j_1,j_2\le d, \ \ n_{j_1},n_{j_2}\in \N.$$
For $h\in \Z$ with $0\le h \le dm$, we define
\begin{align*}
&P_{n,h}(z)=P_h(z)=\prod_{j=1}^dR_{j,n}\cdot [z^ha_2(z)^{dn}],\\
&Q_{n,i,j,h}(z)=Q_{i,j,h}(z)=\varphi_{i,j}\left(\dfrac{P_h(z)-P_h(t)}{z-t}\right) \ \ \text{for} \ \  1\le i \le m, \ \ 1\le j \le d.
\end{align*}
Then Theorem $\ref{main general}$ yields that the vector of polynomials
$(P_h,Q_{i,j,h})_{\substack{1\le i \le m \\ 1\le j \le d }}$ is a weight $(n,\ldots,n)\in \N^{dm}$ Pad\'{e} approximants of $(f_{i,j})_{\substack{1\le i \le m \\ 1\le j \le d}}$.
Define
\begin{align*}
&\Delta_n(z)={\rm{det}}
\begin{pmatrix}
P_{0}(z) & P_1(z) & \ldots & P_{dm}(z)\\
Q_{1,1,0}(z) & Q_{1,1,1}(z) & \ldots & Q_{1,1,dm}(z)\\
\vdots & \vdots & \ddots & \vdots\\
Q_{m,1,0}(z) & Q_{m,1,1}(z) & \ldots & Q_{m,1,dm}(z)\\
\vdots & \vdots & \ddots & \vdots\\
Q_{1,d,0}(z) & Q_{1,d,1}(z) & \ldots & Q_{1,d,dm}(z)\\
\vdots & \vdots & \ddots & \vdots\\
Q_{m,d,0}(z) & Q_{m,d,1}(z) & \ldots & Q_{m,d,dm}(z)\\
\end{pmatrix}.
\end{align*}
The nonvanishing of $\Delta_n(z)$ has been proven in \cite[Proposition $4.1$]{DHK3}. 
\end{example}
\begin{remark} \label{pre results}
We mention that Examples $\ref{1}$, $\ref{2}$, $\ref{3}$ and $\ref{6}$ can be applicable to prove the linear independence of the values of the series which are considered in each example. However such results have been obtained as follows.
 
In Example $\ref{1}$, for $\gamma_1,\ldots,\gamma_d\in \Q$, the series $f_{j}(z)$ become $E$-functions in the sense of Siegel (see \cite{Siegel}).  
The linear independence result for the values of these $E$-functions has been studied by V$\ddot{\text{a}}$$\ddot{\text{a}}$n$\ddot{\text{a}}$nen in \cite{Va3}.
In Example $\ref{2}$, for $\delta\in \Q$ and $\gamma_1,\ldots,\gamma_d\in K$ for an algebraic number field $K$, the series $f_j(z)$ are Euler-type series.
In the case of $\delta=0$, the global relations among the values of these Euler-type series have been studied by Matala-aho and Zudilin for $d=1$ in \cite{Ma-Zu} and Sepp$\ddot{\text{a}}$l$\ddot{\text{a}}$ for general $d$ in \cite{Sep}.
Likewise, Example $\ref{3}$, for $\delta_1,\ldots,\delta_d \in \Q$ and $\gamma=1$, treats Euler-type series.
In \cite{Va1}, V$\ddot{\text{a}}$$\ddot{\text{a}}$n$\ddot{\text{a}}$nen studied the global relations among the values of these Euler-type series.
In Example $\ref{6}$, for $\gamma_1,\ldots,\gamma_d\in \Q$ and $\alpha_1,\ldots,\alpha_m\in K$ for an algebraic number field $K$, the series $f_{i,j}(z)$ become $G$-functions in the sense of Siegel (see \cite{Siegel}) called the first Lerch functions. The linear independence of values of these functions has been studied by David, Hirata-Kohno and the author in \cite[Theorem $2.1$]{DHK3}.
\end{remark}

\section{Proof of Theorem $\ref{main!}$} 
{{This section is devoted to the proof of Theorem $\ref{main!}$. We proved the more precise theorem that we will state below. To state the theorem, we prepare the notation.}} 

Let $K$ an algebraic number field. We denote the set of places of $K$ by ${{\mathfrak{M}}}_K$. 
For $v\in {{\mathfrak{M}}}_K$, we denote the completion of $K$ with respect to $v$ by $K_v$ and define the normalized absolute value $|\cdot |_v$ as follows:
\begin{align*}
&|p|_v=p^{-\tfrac{[K_v:\Q_p]}{[K:\Q]}} \ \text{if} \ \ v\mid p, \ \ \ \ \ \ |x|_v=|\iota_v x|^{\tfrac{[K_v:\R]}{[K:\Q]}} \ \text{if} \ \ v\mid \infty,
\end{align*}
where $p$ is a prime number and $\iota_v$ the embedding $K\hookrightarrow \C$ corresponding to $v$. 

Let $\beta\in K$, we define the absolute Weil height of  $\beta$ as 
\begin{align*}
&{{\mathrm{H}}({\beta})=\prod_{v\in {{\mathfrak{M}}}_K} \max\{ 1,|\beta|_v\},}
\end{align*}
Let $m$ be a positive integer and $\boldsymbol{\beta}=(\beta_0,{\ldots},\beta_m) \in\mathbb{P}_m(K)$.  
We define the absolute Weil height of $\boldsymbol{\beta}$ by
\begin{align*}
&{\mathrm{H}}(\boldsymbol{\beta})=\prod_{v\in {{\mathfrak{M}}}_K} \max\{ |\beta_0|_v,\ldots,|\beta_m|_v\}.
\end{align*}
{{and the logarithmic absolute Weil height by ${\mathrm{h}}(\boldsymbol{\beta})={\rm{log}}\, \mathrm{H}(\boldsymbol{\beta})$. 
Let $v\in \mathfrak{M}_K$, then ${\mathrm{h}}_v(\boldsymbol{\beta})=\log\Vert \boldsymbol{\beta}\Vert_v$ where $\Vert\cdot\Vert_v$ is the sup $v$-adic norm. Then we have ${\mathrm{h}}(\boldsymbol{\beta})={\displaystyle{\sum_{v\in \mathfrak{M}_K}}}{\mathrm{h}}_v(\boldsymbol{\beta})$ and for $\beta\in K$, ${\mathrm{h}}(\beta)$ is the height of the point $(1,\beta)\in\mathbb{P}_1(K)$. 

Let $u$ be an integer with $u\ge 2$. We put $\nu(u)=u\prod_{q:\text{prime}, q|u}q^{1/(q-1)}$.
Let $v_0$ be a place of $K$, $\alpha\in K$ with $|\alpha|_{v_0}>2$. 
In the case of $v_0$ is a nonarchimedean place, we denote the prime number under $v_0$ by $p_{v_0}$ and put 
$\varepsilon_{v_0}(u)=1$ if $u$ is coprime with $p_{v_0}$ and $\varepsilon_{v_0}(u)=0$ if $u$ is divisible by $p_{v_0}$.
We denote Euler's totient function by $\varphi$.

We define real numbers 
{\small{\begin{align*}
&\mathbb{A}_{v_0}(\alpha)={\rm{h}}_{v_0}(\alpha)-\begin{cases} {\rm{h}}_{v_0}(2) & \ \ \text{if} \ v_0\mid \infty \\ 
\dfrac{\varepsilon_{v_0}(u) \log\,|p_{v_0}|_{v_0}}{p_{v_0}-1} & \ \ \text{if} \ v_0\nmid \infty,
\end{cases}\\
&\mathbb{B}_{v_0}(\alpha)=(u-1){\rm{h}}(\alpha)+(u+1){\rm{h}}(2)+\dfrac{(2u-1)\log\,\nu(u)}{u}+\dfrac{u-1}{\varphi(u)}
-(u-1){\rm{h}}_{v_0}(\alpha)-\begin{cases} 
(u+1){\rm{h}}_{v_0}(2) & \ \ \text{if} \ v_0\mid \infty \\ 
\log\,|\nu(u)|^{-1}_{v_0} & \ \ \text{if} \ v_0\nmid \infty,
\end{cases}\\
&U_{v_0}(\alpha)=(u-1){\rm{h}}_{v_0}(\alpha)+
\begin{cases} (u+1){\rm{h}}_{v_0}(2)& \ \ \text{if} \ v_0\mid \infty \\ 
\log\,|\nu(u)|^{-1}_{v_0} & \ \ \text{if} \ v_0\nmid \infty,
\end{cases}\\
&V_{v_0}(\alpha)=\mathbb{A}_{v_0}(\alpha)-\mathbb{B}_{v_0}(\alpha).
\end{align*}}}
We can now state the following theorem.
\begin{theorem} \label{special hypergeometric}
Assume $V_{v_0}(\alpha)>0$. 
Then, for any positive number $\varepsilon$ with $\varepsilon<V_{v_0}(\alpha)$, there exists an effectively computable positive number $H_0$ depending on $\varepsilon$ and the given data such that the following property holds.
For any $\boldsymbol{\lambda}=(\lambda,\lambda_{l})_{0\le l \le u-2} \in K^{u} \setminus \{ \bold{0} \}$ satisfying $H_0\le {\mathrm{H}}(\boldsymbol{\lambda})$, then 
\begin{align*}
\left|\lambda+\sum_{l=0}^{u-2}\lambda_{l}\cdot \dfrac{1}{\alpha^{l+1}} {}_2F_1\left(\frac{1+l}{u},1, \left.\frac{u+l}{u}\,\right|\frac{1}{\alpha^u}\right)\right|_{v_0}>C(\alpha,\varepsilon){\mathrm{H}}_{v_0}(\boldsymbol{\lambda}) {\mathrm{H}}(\boldsymbol{\lambda})^{-\mu(\alpha,\varepsilon)},
\end{align*}
where 
\begin{align*}
\mu(\alpha,\varepsilon)=
\dfrac{\mathbb{A}_{v_0}(\alpha)+{{U}}_{v_0}(\alpha)}{V_{v_0}(\alpha)-\varepsilon}  \mbox{and}  C(\alpha,\varepsilon)=
\exp \left(-\left(\frac{\log(2)}{V_{v_0}(\alpha)-\varepsilon}+1\right)(\mathbb{A}_{v_0}(\alpha)+{{U}}_{v_0}(\alpha))\right).
\end{align*}
\end{theorem}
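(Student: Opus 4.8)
The plan is to apply the Pad\'e approximation machinery built in Sections 2--5 to the explicit family of Laurent series of Example \ref{Chebyshev}, combined with the standard Siegel-type linear independence argument over a number field. First I would specialize Example \ref{Chebyshev} to the case at hand: take $a(z)=z^u-1$, $b(z)=-z^{u-1}$, so $D=-(z^u-1)\partial_z-z^{u-1}$, and recall that the Laurent series
$$f_l(z)=\frac{1}{z^{l+1}}\,{}_2F_1\!\left(\frac{1+l}{u},1,\left.\frac{u+l}{u}\,\right|\frac{1}{z^u}\right)\qquad(0\le l\le u-2)$$
satisfy $D\cdot f_l\in K[z]$ and are linearly independent over $K$. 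Evaluating at $z=\alpha$ (legitimate since $|\alpha|_{v_0}>2$ forces convergence at the relevant place, and at all places one controls the $v$-adic size of the remainders) gives the numbers appearing in Theorem \ref{main!}. Applying Theorem \ref{main general} with $d=1$ and the weight vector $\vec r$ chosen so that $F(z)$ can be taken to be a suitable power of $z$ (as in Example \ref{Chebyshev}, with the Rodrigues operator $R_{n}=\frac{1}{n!}(\partial_z-\frac{z^{u-1}}{z^u-1})^n(z^u-1)^n$) yields, for each $n$ and each $h$ with $0\le h\le u-1$, a Pad\'e-type approximant $(P_{n,h},Q_{n,l,h})$ of weight $(n,\dots,n)$ of $(f_0,\dots,f_{u-2})$.

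Next I would establish the three quantitative ingredients the Siegel argument needs. (1) \emph{Nonvanishing of the determinant}: the matrix $\Delta_n(z)$ of Example \ref{Chebyshev} must be shown to be a nonzero constant; by Proposition \ref{det} this reduces to computing the scalar $\Theta_n$ (a determinant of $\varphi_l$-values of monomials times $z^h(z^u-1)^n$) together with the leading coefficient $p_W$ of $P_{n,W}$, and one checks these are nonzero --- this is exactly what ``Lemma \ref{non vanish uN}'' referred to in Example \ref{Chebyshev} is meant to supply. (2) \emph{Degree and archimedean size estimates}: from the explicit Rodrigues formula one reads off $\deg P_{n,h}\le n(u-1)+h$ and, expanding $R_n(z^u-1)^n z^h$, bounds on $|P_{n,h}|$, $|Q_{n,l,h}|$, and on the $v_0$-adic size of the remainders $\mathfrak R_{n,l,h}(\alpha)=P_{n,h}(\alpha)f_l(\alpha)-Q_{n,l,h}(\alpha)$, which are $O(|\alpha|^{-n(u-1)}_{v_0})$ up to controlled factors; the exponents $u-1$, $2u-1$ etc.\ in $\mathbb A_{v_0},\mathbb B_{v_0},U_{v_0}$ should emerge from exactly this bookkeeping. (3) \emph{Denominator (common denominator) estimates}: since the $f_l$ have coefficients $(\tfrac{1+l}{u})_k/(\tfrac{u+l}{u})_k$, one multiplies the $P_{n,h},Q_{n,l,h}$ by an explicit integer $\mathrm{den}_n$ to clear denominators; the prime $q\mid u$ contributions and the factor $\nu(u)=u\prod_{q\mid u}q^{1/(q-1)}$ together with $\log u+\sum_{q\mid u}\frac{\log q}{q-1}$ in $V(\alpha)$ come from estimating $\mathrm{den}_n$ via the prime-number-theorem-type bound on lcm's and on Pochhammer denominators (this is where the Appendix / Fischler--Rivoal input on $G$-operators is used to guarantee the series are $G$-functions with the right growth).

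With these in hand, the conclusion follows from the classical linear independence criterion: given $\boldsymbol\lambda=(\lambda,\lambda_l)\neq\boldsymbol 0$ of large height $H$, choose $n$ optimally (roughly $n\asymp \log H$ divided by the relevant rate) so that the linear form $\Lambda_n = \mathrm{den}_n\bigl(\lambda P_{n,h}(\alpha)+\sum_l \lambda_l Q_{n,l,h}(\alpha)\bigr)$ is a nonzero algebraic integer for at least one $h$ (nonvanishing of $\Delta_n$ guarantees the vectors $(P_{n,h}(\alpha),Q_{n,0,h}(\alpha),\dots)_{0\le h\le u-1}$ are linearly independent, so not all $h$ can annihilate $\boldsymbol\lambda$), then apply the product formula / Liouville inequality $\prod_v|\Lambda_n|_v\ge $ something, isolating $|\Lambda_n|_{v_0}$ and using the remainder estimate $\Lambda_n = \mathrm{den}_n(P_{n,h}(\alpha)\cdot(\text{linear form in }1,f_l(\alpha)) - \sum_l\lambda_l\mathfrak R_{n,l,h}(\alpha))$ to get both an upper and a lower bound; balancing them produces the stated $\mu(\alpha,\varepsilon)$ and $C(\alpha,\varepsilon)$, and in particular $V_{v_0}(\alpha)>0$ (which specializes to $V(\alpha)>0$ when $K=\Q$, $v_0=\infty$, $\alpha$ replaced by $\alpha^u$) forces $\mu<\,$(something giving a genuine independence statement), hence Theorem \ref{main!}. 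The main obstacle I expect is step (1) together with the precise denominator estimate in step (3): proving $\Delta_n\neq 0$ requires a genuine computation of $\Theta_n$ for this hypergeometric family (not a Vandermonde as in Examples \ref{1}--\ref{4}, since here $d=1$ but the parameter $l$ varies, so $\Theta_n$ is a determinant of shifted Pochhammer ratios), and extracting the sharp constant $\nu(u)$ and the $\tfrac{u-1}{\varphi(u)}$ term demands a careful arithmetic analysis of $\mathrm{lcm}$-type denominators with the primes dividing $u$ treated separately --- everything else is, in principle, routine tracking of the explicit Rodrigues expressions through the estimates.
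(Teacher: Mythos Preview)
Your plan is essentially the paper's own proof: specialize Example~\ref{Chebyshev}, prove $\Delta_n\neq 0$ via Proposition~\ref{det} and an explicit computation of $\Theta_n$ (the paper does this in Lemma~\ref{non vanish uN}, using a Beta-integral representation of $\varphi_l(t^l(t^u-1)^n)$ rather than any Vandermonde trick, exactly as you anticipate), derive explicit formulas and then size estimates for $P_{n,h}$, $Q_{n,l,h}$, $\mathfrak R_{n,l,h}$, and feed everything into a packaged Siegel--Liouville criterion (the paper quotes \cite[Proposition~5.6]{DHK2}).

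Two small corrections to your sketch, neither of which affects the strategy. First, the Appendix on $G$-operators is \emph{not} used in the proof of Theorem~\ref{special hypergeometric}; the denominator control comes directly from the elementary Pochhammer-ratio bound of Lemma~\ref{valuation} (from \cite{KP}), which already produces the terms $\log\nu(u)$ and $(u-1)/\varphi(u)$. Second, the paper does not work with general $n$ but restricts to $n=uN$ throughout the estimates (Lemmas~\ref{P,Q,R}, \ref{P,Q}, \ref{R}); this is what makes the matrix $\Theta_n$ diagonal and the remainder formula clean, and your remainder exponent should read $|\alpha|_{v_0}^{-uN}$ (i.e.\ $|\alpha|_{v_0}^{-n}$), not $|\alpha|_{v_0}^{-n(u-1)}$, since ${\rm ord}_\infty\,\mathfrak R_{n,l,h}\ge n+1$.
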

{{We derive Theorem $\ref{main!}$ from Theorem $\ref{special hypergeometric}$.
\begin{proof}[Proof of Theorem $\ref{main!}$]
Let us consider the case of $K=\Q$, $v_0=\infty$ and $\alpha\in \Z\setminus \{0,\pm 1\}$. Then we see that $V_{\infty}(\alpha)=V(\alpha)$ where $V(\alpha)$ is defined in Theorem $\ref{main!}$.
Assume $V(\alpha)>0$. Choose some $\boldsymbol{\lambda}=(\lambda,\lambda_{0}\ldots,\lambda_{u-2})\in \Q^{u}\setminus\{\bold{0}\}$ such that 
$$\lambda_0+\sum_{l=0}^{u-2}\lambda_{l}\cdot \dfrac{1}{\alpha^{l+1}} {}_2F_1\left(\frac{1+l}{u},1, \left.\frac{u+l}{u}\,\right|\frac{1}{\alpha^u}\right)=0.$$ 
If ${\rm{H}}(\boldsymbol{\lambda})\geq H_0$ (where $H_0$ is as in Theorem $\ref{special hypergeometric}$), there is nothing more to prove. Otherwise, let $m>0$ be a rational integer such that ${\rm{H}}(m\boldsymbol{\lambda})\geq H_0$. Then Theorem $\ref{special hypergeometric}$ ensures that
$$m\left(\lambda_0+\sum_{l=0}^{u-2}\lambda_{l}\cdot \dfrac{1}{\alpha^{l+1}} {}_2F_1\left(\frac{1+l}{u},1, \left.\frac{u+l}{u}\,\right|\frac{1}{\alpha^u}\right)\right)\neq 0.$$ 
This is a contradiction and completes the proof of Theorem $\ref{main!}$. 
\end{proof}
}}
{{Now we start the proof of Theorem $\ref{special hypergeometric}$.}}
The proof is relying on the Pad\'{e} approximants obtained in Example $\ref{Chebyshev}$.
In the following, we use the same notation as in Example $\ref{Chebyshev}$.
\subsection{Computation of determinants}
{{\begin{lemma} \label{non vanish uN}
Let $n$ be a positive integer. Put $n=uN+s$ for nonnegative integers $N,s$ with $0\le s \le u-1$. Then,
\begin{align*}
\Delta_{n}(z)=(-1)^{(uN+s+1)(u-1)}\dfrac{((uN+s+1)u-1-uN)_{uN+s}}{(uN+s)!}  \prod_{l=0}^{u-2}\dfrac{\left(\tfrac{u-1}{u}\right)_{uN+s}}{\left(\tfrac{u+l}{u}\right)_{uN+s}}\in K\setminus \{0\}.
\end{align*}
\end{lemma}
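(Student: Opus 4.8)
The plan is to apply Proposition~\ref{det} to the specific data of Example~\ref{Chebyshev} and then evaluate the two unknown quantities it leaves open: the leading coefficient $p_W$ of $P_W(z)$ and the numerical determinant $\Theta_n$. Here $d=1$, $a_1(z)=1$ is not the right normalization; rather one writes $a(z)=z^u-1$, and in the notation of Section~5 one must take $a_2(z)=z^u-1$ and $a_1(z)=1$ so that $\varepsilon_{a_1}=0$, $W=w_1=u-1$. With $d=1$ the product $\prod_{j'\neq j}$ in Proposition~\ref{det} is empty, so the formula collapses to $\Delta_n(z)=(-1)^W p_W\,\Theta_n$ with $W=u-1$. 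Thus the whole computation reduces to two independent pieces.

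\textbf{Computing $p_W$.} First I would determine the leading coefficient of $P_{u-1}(z)=\tfrac{1}{n!}\left(\partial_z-\tfrac{z^{u-1}}{z^u-1}\right)^n(z^u-1)^n\cdot z^{u-1}$, which by Lemma~\ref{decompose} equals $\tfrac{1}{n!}w(z)^{-1}\partial_z^n\bigl(w(z)(z^u-1)^n z^{u-1}\bigr)$ for a nonzero solution $w$ of $D$; since $f_0(z)=(z^u-1)^{-1/u}$ one may take $w(z)=(z^u-1)^{1/u}$, so $P_{u-1}(z)=\tfrac{1}{n!}(z^u-1)^{-1/u}\partial_z^n\bigl((z^u-1)^{n+1-1/u}z^{u-1}\bigr)$. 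The top-degree term of $(z^u-1)^{n+1-1/u}z^{u-1}$ is $z^{u(n+1)-1}$ (after expanding the binomial series, the exponent $u(n+1-1/u)+(u-1)=u(n+1)-1$), and applying $\partial_z^n$ multiplies the leading coefficient by the falling factorial $\bigl(u(n+1)-1\bigr)\bigl(u(n+1)-2\bigr)\cdots\bigl(u(n+1)-n\bigr)$, then dividing by $(z^u-1)^{1/u}$ lowers the degree by $1$, giving $\deg P_{u-1}=u(n+1)-1-1=(n+1)(u-1)+ (n+1) -2=(n+1)W$ as required, with $p_W=\tfrac{1}{n!}\bigl(u(n+1)-1-n\bigr)_{n}$ in Pochhammer notation — this is exactly where the factor $\tfrac{((uN+s+1)u-1-uN)_{uN+s}}{(uN+s)!}$ in the statement comes from once $n=uN+s$ is substituted.

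\textbf{Computing $\Theta_n$.} Here $\Theta_n=\det\bigl(\varphi_l(t^{h}(t^u-1)^{n})\bigr)_{0\le l\le u-2,\ 0\le h\le u-2}$, and by the definition of $\varphi_l$ from the coefficients of $f_l$, each entry is a $K$-linear combination of the numbers $\tfrac{(\frac{1+l}{u})_k}{(\frac{u+l}{u})_k}$. The key observation is that expanding $(t^u-1)^n=\sum_{i}\binom{n}{i}(-1)^{n-i}t^{ui}$ and applying $\varphi_l$ picks out only indices $\equiv l \pmod u$ in a way that makes the matrix essentially triangular in a suitable ordering, or more efficiently: one shows that $\varphi_l\bigl(t^h(t^u-1)^n\bigr)=0$ unless $h=l$, and when $h=l$ it equals $\varphi_l(t^l(t^u-1)^n)$, a single explicit quotient of Pochhammer symbols. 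Indeed $\varphi_l$ annihilates $t^k$ whenever $k\not\equiv l\pmod u$, so $\varphi_l(t^h(t^u-1)^n)$ vanishes for $h\not\equiv l\pmod u$, i.e. for $h\neq l$ in the range $0\le h\le u-2$; the diagonal entry $\varphi_l(t^l(t^u-1)^n)$ is computed by the same binomial expansion and telescoping of $\tfrac{(\frac{1+l}{u})_{k}}{(\frac{u+l}{u})_{k}}$, yielding (up to sign) $\tfrac{(\frac{u-1}{u})_n}{(\frac{u+l}{u})_n}$. Hence $\Theta_n=\pm\prod_{l=0}^{u-2}\tfrac{(\frac{u-1}{u})_n}{(\frac{u+l}{u})_n}$, and combining with $(-1)^W p_W$ and substituting $n=uN+s$ gives the claimed closed form; the nonvanishing is then immediate since none of the Pochhammer symbols $\bigl(\tfrac{u+l}{u}\bigr)_n$, $\bigl(\tfrac{u-1}{u}\bigr)_n$ nor the falling factorial $p_W\cdot n!$ vanishes, the arguments being non-integers or positive.

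\textbf{Main obstacle.} The routine but delicate point is the exact bookkeeping of signs and of the shift in the Pochhammer indices when passing from $\varphi_l(t^l(t^u-1)^n)$ to the quotient $\tfrac{(\frac{u-1}{u})_n}{(\frac{u+l}{u})_n}$, and then the substitution $n=uN+s$ to match the precise exponents and the sign $(-1)^{(uN+s+1)(u-1)}$ in the statement; I expect the diagonalization of $\Theta_n$ via the congruence $k\equiv l\pmod u$ to be the conceptual heart, with everything else being careful (but mechanical) manipulation of binomial sums and Pochhammer identities.
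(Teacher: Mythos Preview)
Your overall strategy matches the paper's exactly: apply Proposition~\ref{det} with $d=1$ (so the $\prod_{j'\neq j}$ factor is empty), compute the leading coefficient $p_W$ of $P_{u-1}$, observe that the matrix $\Theta_n$ is diagonal because $\varphi_l(t^k)=0$ unless $k\equiv l\pmod u$, and evaluate the diagonal entries $\varphi_l(t^l(t^u-1)^n)$. The one methodological difference is in that last step: the paper evaluates the hypergeometric sum $\sum_v\binom{n}{v}(-1)^{n-v}\tfrac{((1+l)/u)_v}{((u+l)/u)_v}$ via the Euler Beta integral representation $\tfrac{(a)_v}{(c)_v}=\tfrac{\Gamma(c)}{\Gamma(a)\Gamma(c-a)}\int_0^1 \xi^{a+v-1}(1-\xi)^{c-a-1}d\xi$, which collapses the sum to a single Beta integral and hence a quotient of Gamma values; your ``telescoping'' route amounts instead to recognising this sum as $(-1)^n\,{}_2F_1(-n,\tfrac{1+l}{u};\tfrac{u+l}{u};1)$ and invoking Chu--Vandermonde, which gives the same closed form $(-1)^n\tfrac{((u-1)/u)_n}{((u+l)/u)_n}$ more directly. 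Either argument is fine.

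Two bookkeeping slips in your $p_W$ computation to fix: the solution $w$ of $D=-(z^u-1)\partial_z-z^{u-1}$ is $w(z)=(z^u-1)^{-1/u}$ (not $(z^u-1)^{1/u}$), so that $P_{u-1}(z)=\tfrac{1}{n!}(z^u-1)^{1/u}\partial_z^n\bigl[(z^u-1)^{n-1/u}z^{u-1}\bigr]$; the top degree inside the bracket is then $u(n-1/u)+(u-1)=u(n+1)-2$, and after $\partial_z^n$ and multiplication by $(z^u-1)^{1/u}$ one recovers $\deg P_{u-1}=(n+1)(u-1)$ with leading coefficient $\tfrac{1}{n!}\bigl(u(n+1)-1-n\bigr)_n$, as you claim.
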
}}
\begin{proof}
Put 
$$
\Theta_n=
{\rm{det}}\begin{pmatrix}
\varphi_{0}((t^u-1)^n) & \ldots & \varphi_0(t^{u-2}(t^u-1)^n)\\
\vdots & \ddots & \vdots\\
\varphi_{u-2}((t^u-1)^n) & \ldots & \varphi_{u-2}(t^{u-2}(t^u-1)^n)
\end{pmatrix}.
$$
Proposition $\ref{det}$ implies that 
$$\Delta_n(z)=(-1)^{(u-1)}\times \dfrac{1}{[(n+1)(u-1)]!}\partial^{(n+1)(u-1)}_z\cdot P_{u-1}(z)\times \Theta_n.$$
According to the definition of $P_{u-1}(z)$, 
\begin{align*}
\dfrac{1}{[(n+1)(u-1)]!}\partial^{(n+1)(u-1)}_z\cdot P_{u-1}(z)=
\dfrac{((n+1)u-1-n)_n}
{n!}.
\end{align*}
By the definition of $f_l$, 
$$
\varphi_l(t^k)=\begin{cases}
\dfrac{\left(\tfrac{1+l}{u}\right)_N}{\left(\tfrac{u+l}{u}\right)_N} & \ \ \text{if} \ \ k=uN+l \ \text{for some} \ N\in \Z,\\
0 & \ \ \text{otherwise}.
\end{cases}
$$
Above equality shows 
\begin{align}
\Theta_{n}&=
{\rm{det}}\begin{pmatrix}
\varphi_{0}((t^u-1)^{uN+s}) & 0 & \ldots & 0 \\
0& \varphi_{1}(t(t^u-1)^{uN+s})  & \ldots & 0 \\
\vdots & \vdots & \ddots & \vdots\\
0 & 0& \ldots & \varphi_{u-2}(t^{u-2}(t^u-1)^{uN+s})
\end{pmatrix} \nonumber\\
&=\prod_{l=0}^{u-2}\varphi_{l}(t^{l}(t^u-1)^{uN+s}). \label{Theta}
\end{align}
We now compute $\varphi_{l}(t^{l}(t^u-1)^{uN+s})$.
Since we have $$t^{l}(t^u-1)^{uN+s}=\sum_{v=0}^{uN+s}\binom{uN+s}{v}(-1)^{uN+l-v}t^{uv+l},$$
we obtain
\begin{align*}
\varphi_{l}(t^{l}(t^u-1)^{uN+s})=\sum_{v=0}^{uN+s}\binom{uN+s}{v}(-1)^{uN+l-v}\dfrac{\left(\tfrac{1+l}{u}\right)_v}{\left(\tfrac{u+l}{u}\right)_v}.
\end{align*}
For positive real numbers $\alpha,\beta$ with $\alpha<\beta$ and a nonnegative integer $v$, 
\begin{align*}
\dfrac{(\alpha)_v}{(\beta)_v}
&=\dfrac{\Gamma(\beta)}{\Gamma(\alpha)\Gamma(\beta-\alpha)}\int^1_0 \xi^{\alpha+v-1}(1-\xi)^{\beta-\alpha-1}d\xi . 
\end{align*}
Applying above equality for $\alpha=(1+l)/u,\beta=(u+l)/u$, we obtain
\begin{align}
\varphi_{l}(t^{l}(t^u-1)^{uN+s})&=\dfrac{\Gamma(\tfrac{u+l}{u})}{\Gamma(\tfrac{1+u}{u})\Gamma(\tfrac{u-1}{u})}\sum_{v=0}^{uN+s}\binom{uN+s}{v}(-1)^{uN+l-v} \int^1_0\xi^{\tfrac{1+l}{u}+v-1}(1-\xi)^{\tfrac{u-1}{u}-1}d\xi \label{Gamma rep}\\
&=\dfrac{(-1)^{uN+s}\Gamma(\tfrac{u+l}{u})}{\Gamma(\tfrac{1+l}{u})\Gamma(\tfrac{u-1}{u})}\int^1_0\xi^{\tfrac{1+l}{u}-1}(1-\xi)^{uN+s+\tfrac{u-1}{u}-1}d\xi \nonumber\\
&=\dfrac{(-1)^{uN+s}\Gamma(\tfrac{u+l}{u})}{\Gamma(uN+s+\tfrac{u+l}{u})}\dfrac{\Gamma(uN+s+\tfrac{u-1}{u})}{\Gamma(\tfrac{u-1}{u})} \nonumber\\
&=\dfrac{(-1)^{uN+s}\left(\tfrac{u-1}{u}\right)_{uN+s}}{\left(\tfrac{u+l}{u}\right)_{uN+s}}. \nonumber
\end{align} 
Substituting above equality into Equation $(\ref{Theta})$, we obtain the assertion.
\end{proof}

\subsection{Estimates}
Unless stated otherwise, the Landau symbols small $o$ and large $O$ refer when $N$ tends to infinity.

For a finite set $S$ of rational numbers and a rational number $a$, we define $${\rm{den}}\,(S)=\min\{n\in \Z\mid n\ge 1, ns\in \Z \ \text{for all} \ s\in S\} \ \ \ \text{and}  \ \ \ \mu(a)={\rm{den}}\,(a)\prod_{\substack{q:{\rm{prime}}\\ q|{\rm{den}}(a)}} q^{1/(q-1)}.$$
We now quote an estimate of the denominator of $((a)_k/(b)_k)_{0\le k \le n}$ for $n\in \N$ and $a,b\in \Q$ being nonnegative integers.
\begin{lemma} \label{valuation}  {\rm{\cite[Lemma $5.1$]{KP}}}
Let $n\in \N$ and $a,b\in \Q$ being nonnegative integers. 
Put $$D_n={\rm{den}}\left(\dfrac{(a)_0}{(b)_0},\ldots,\dfrac{(a)_n}{(b)_n}\right).$$
Then we have
$$\limsup_{n\to \infty}\dfrac{1}{n}\log\,D_n\le \log\,\mu(a)+\dfrac{{\rm{den}}(b)}{\varphi({\rm{den}}(b))},$$
where $\varphi$ is Euler's totient function.
\end{lemma}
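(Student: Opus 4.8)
The plan is to estimate $v_p(D_n)$ prime by prime and then sum $\log D_n=\sum_p v_p(D_n)\log p$, splitting the primes according to whether they divide $\mathrm{den}(a)$. Write $a=s/A$ and $b=t/B$ in lowest terms, so $A=\mathrm{den}(a)$, $B=\mathrm{den}(b)$; if $a$ is a non-positive integer then $(a)_k=0$ for all large $k$ and the statement is trivial, so assume $(a)_k,(b)_k\ne 0$. The starting point is the identity
\[
\frac{(a)_k}{(b)_k}=\frac{\prod_{j=0}^{k-1}(s+jA)}{\prod_{j=0}^{k-1}(t+jB)}\cdot\frac{B^k}{A^k},
\]
in which $\prod_{j<k}(s+jA)$ is an integer (up to sign) prime to $A$ and $\prod_{j<k}(t+jB)$ an integer prime to $B$, and $v_p(D_n)=\max_{0\le k\le n}\max\{0,\,-v_p((a)_k/(b)_k)\}$. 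For the primes $p\mid A$, every factor $s+jA$ is prime to $p$, so $v_p((a)_k)=-k\,v_p(A)$, while $v_p((b)_k)\le v_p\big(\prod_{j<k}(t+jB)\big)\le\frac{k}{p-1}+O_B(\log k)$ by counting the $j<k$ in each residue class modulo $p^m$. Hence the $p\mid A$ part of $\log D_n$ is at most $\sum_{p\mid A}\big(nv_p(A)+\tfrac{n}{p-1}\big)\log p+O(\log n)=n\big(\log A+\sum_{q\mid A}\tfrac{\log q}{q-1}\big)+O(\log n)=n\log\mu(a)+O(\log n)$.

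Next, the primes $p\nmid A$. If $p\mid B$ then $v_p((b)_k)=-k\,v_p(B)$ while $v_p((a)_k)\ge 0$, so $(a)_k/(b)_k$ is $p$-integral and such primes contribute nothing. For $p\nmid AB$ the key observation is that $v_p((a)_k)\ge v_p(k!)$: the solutions of $s+jA\equiv 0\pmod{p^m}$ form one residue class modulo $p^m$, so any window of $k$ consecutive integers contains at least $\lfloor k/p^m\rfloor$ of them, whence $v_p((a)_k)=\sum_{m\ge1}\#\{j<k:p^m\mid s+jA\}\ge\sum_{m\ge1}\lfloor k/p^m\rfloor=v_p(k!)$ (equivalently $(a)_k=(-1)^k\binom{-a}{k}k!$ with $\binom{-a}{k}\in\mathbb Z_p$). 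Therefore
\[
-v_p\!\left(\frac{(a)_k}{(b)_k}\right)=v_p((b)_k)-v_p((a)_k)\;\le\;v_p((b)_k)-v_p(k!)=v_p\!\left(\binom{b+k-1}{k}\right),
\]
and since the left side is nonnegative on the set where it matters, the $p\nmid AB$ part of $\log D_n$ is at most $\log\operatorname{lcm}_{0\le k\le n}\mathrm{num}\binom{b+k-1}{k}=\sum_{p\nmid AB}\big(\max_{k\le n}v_p\binom{b+k-1}{k}\big)\log p$.

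It then remains to show that this last sum is $\le\frac{B}{\varphi(B)}\,n+o(n)$. Writing $v_p\binom{b+k-1}{k}=v_p((b)_k)-v_p(k!)=\sum_{m\ge 1}\big(\#\{j<k:p^m\mid t+jB\}-\lfloor k/p^m\rfloor\big)$, each summand lies in $\{0,1\}$, and it vanishes once $p^m$ exceeds every term $t+jB$ with $j<k$; since the $p$-adic expansion of $-t/B$ is eventually periodic with period $\le\varphi(B)$, this forces $\max_{k\le n}v_p\binom{b+k-1}{k}\le\lfloor\log_p n\rfloor+O_B(1)$, with the quantity being positive only when $p$ divides some $t+jB$ with $0\le j<n$. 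Summing over such primes, the contribution of $p\le n$ is at most $\sum_{p\le n}\lfloor\log_p n\rfloor\log p+O_B(\theta(n))$, and for $p>n$ one writes each $t+jB=p\,m'$ with $m'\le B$ bounded, groups the primes $p$ by the residue class $t\,\overline{m'}$ modulo $B$, and applies the prime number theorem in arithmetic progressions ($\theta(x;B,r)\sim x/\varphi(B)$) together with a Mertens-type estimate; this is where the factor $B/\varphi(B)=\mathrm{den}(b)/\varphi(\mathrm{den}(b))$ is produced. Dividing by $n$ and letting $n\to\infty$ yields the asserted bound.

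The routine parts are the treatment of $p\mid A$ and the inequality $v_p((a)_k)\ge v_p(k!)$. The real difficulty is the final step: showing that the ``numerator primes'' $\sum_{p\nmid AB}\big(\max_{k\le n}v_p\binom{b+k-1}{k}\big)\log p$ contribute at most $\frac{\mathrm{den}(b)}{\varphi(\mathrm{den}(b))}\,n+o(n)$, and not merely some larger absolute (or $B$-dependent) multiple of $n$. This requires controlling the large prime factors of the arithmetic progression $\{t+jB\}_{0\le j<n}$ and their equidistribution among the residue classes modulo $\mathrm{den}(b)$, and it is the only place where a genuine analytic input (PNT for arithmetic progressions, or an effective Chebyshev/Mertens estimate) is needed.
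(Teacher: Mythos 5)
First, a remark on the comparison: the paper does not prove this lemma at all, it is imported verbatim from \cite[Lemma 5.1]{KP}, so the only thing to assess is your argument on its own terms. Its preparatory steps are correct: the primes $p\mid\mathrm{den}(a)$ do give the $\log\mu(a)$ term, the primes dividing $\mathrm{den}(b)$ but not $\mathrm{den}(a)$ give nothing, and $v_p((a)_k)\ge v_p(k!)$ for $p\nmid\mathrm{den}(a)$ is right. The genuine gap is at the step you yourself flag as ``the real difficulty'': the inequality you reduce everything to, namely $\sum_{p\nmid AB}\bigl(\max_{k\le n}v_p\binom{b+k-1}{k}\bigr)\log p\le \frac{B}{\varphi(B)}\,n+o(n)$ with $B=\mathrm{den}(b)$, is \emph{false} whenever $B\ge 3$, so no analytic input (PNT in progressions or otherwise) can close it. Take $b=1/3$, so $\binom{b+k-1}{k}=\prod_{j<k}(3j+1)/(3^kk!)$. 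For every prime $p\equiv 1\pmod 3$ with $p\le 3n-2$, choose $k=\frac{p-1}{3}+1\le n$: then $p$ divides the factor $3\cdot\frac{p-1}{3}+1=p$ while $v_p(k!)=0$ because $k<p$, so $\max_{k\le n}v_p\binom{b+k-1}{k}\ge 1$; similarly every prime $p\equiv 2\pmod 3$ with $p\le(3n-2)/2$ contributes, via $k=\frac{2p-1}{3}+1<p$ and the factor $2p$. Hence the left-hand side is at least $\theta(3n-2;3,1)+\theta(\tfrac{3n-2}{2};3,2)+o(n)=\bigl(\tfrac32+\tfrac34\bigr)n+o(n)=\tfrac94\,n+o(n)$, strictly larger than $\frac{3}{\varphi(3)}n=\tfrac32 n$. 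This is not an artifact of a crude estimate: your grouping by the cofactor $m'$ together with $\theta(x;B,r)\sim x/\varphi(B)$ produces exactly the constant $\frac{B}{\varphi(B)}\sum_{1\le m\le B,\ (m,B)=1}\frac1m$ (the Bateman-type growth rate of $\log\operatorname{lcm}\{t+jB:\,0\le j<n\}$, plus $\theta(n)$ from the primes $\le n$), and this agrees with $\frac{B}{\varphi(B)}$ only for $B\le 2$.

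The missing idea is that the constant $\mathrm{den}(b)/\varphi(\mathrm{den}(b))$ cannot be reached by an argument that forgets $a$ after the step $v_p((a)_k)\ge v_p(k!)$. For $p\nmid AB$ one must bound $-v_p\bigl((a)_k/(b)_k\bigr)=\sum_{m\ge1}\bigl(\#\{j<k:p^m\mid t+jB\}-\#\{j<k:p^m\mid s+jA\}\bigr)$ directly: each summand lies in $\{-1,0,1\}$ and equals $1$ only when the first zero of the $b$-progression modulo $p^m$ occurs strictly before that of the $a$-progression. It is precisely this extra condition (your bound in effect replaces the zero of the $a$-progression by the worst possible residue) that kills the classes $m'\ge 2$ and most primes below $n$, and reduces the large-prime contribution to essentially one residue class of primes up to about $nB$, i.e.\ to $\frac{B}{\varphi(B)}n$. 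Note also that this joint use of $a$ and $b$ is not optional: read literally, with no relation between $a$ and $b$, the stated bound already fails for $(a,b)=(1,1/3)$, since the primes exhibited above lie in the denominators of $k!/(1/3)_k$ themselves and force $\liminf_n\frac1n\log D_n\ge\frac94>\frac32=\log\mu(1)+\frac{3}{\varphi(3)}$; so the statement has to be read with the hypotheses of \cite[Lemma 5.1]{KP} (which tie $a$ to $b$, as in the application $a=\frac{1+l}{u}$, $b=\frac{u+l}{u}$), and any correct proof must exploit that relation rather than reduce to a statement about $b$ alone.
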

{{For a rational number $a$ and a nonnegative integer $b$, we denote $\binom{a}{b}=(-1)^k(-a)_b/b!$.}}
\begin{lemma} \label{P,Q,R}
Let $N,l,h$ be nonnegative integers with $0\le l \le u-2$ and $0\le h\le u-1$.

$(i)$ We have 
$$P_{uN,h}(z)=(-1)^{uN}\sum_{k=0}^{N(u-1)}\left[\sum_{s=0}^k\binom{uN-1/u}{s+N}\binom{u(s+N)+h}{uN}\binom{1/u}{k-s}\right](-1)^kz^{uk+h}.$$

$(ii)$ Put $\tilde{\varepsilon}_{l,h}= 1$ if $h<l+1$ and $0$ if $l+1\le h$.
We have 
{\small{\begin{align*}
&Q_{uN,l,h}(z)=(-1)^{uN} \sum_{v=\tilde{\varepsilon}_{l,h}}^{N(u-1)}\left(\sum_{k=0}^{(u-1)N-v}(-1)^{k+v}\left[\sum_{s=0}^{k+v}\binom{uN-1/u}{s+N}\binom{u(s+N)+h}{uN} \binom{1/u}{k+v-s}\right] \dfrac{\left(\tfrac{1+l}{u}\right)_{k}}{\left(\tfrac{u+l}{u}\right)_{k}}\right)z^{uv+h-l-1}.
\end{align*}}}

$(iii)$ Put $\varepsilon_{l,h}=1$ if $l< h$ and $\varepsilon_{l,h}=0$ if $h\le l$.
We have 
$$\mathfrak{R}_{uN,l,h}(z)=\dfrac{\left(\tfrac{u-1}{u}\right)_{uN}}{\left(\tfrac{u+l}{u}\right)_{uN}z^{uN+l-h+1}}{\displaystyle{\sum_{k=\varepsilon_{l,h}}^{\infty}}}\binom{u(N+k)+l-h}{uN}\dfrac{\left(\tfrac{1+l}{u}\right)_{k}}{\left(\tfrac{u+l}{u}+uN\right)_{k}}\dfrac{1}{z^{uk}}.
$$
\end{lemma}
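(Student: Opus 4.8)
The plan is to prove all three formulas by direct computation, starting from the explicit definitions of $P_{uN,h}$, $Q_{uN,l,h}$ and $\mathfrak{R}_{uN,l,h}$ in Example $\ref{Chebyshev}$, and using the factorized form of the Rodrigues operator supplied by Lemma $\ref{decompose}$. First I would rewrite
$$P_{uN,h}(z)=\frac{1}{(uN)!}\left(\partial_z-\frac{z^{u-1}}{z^u-1}\right)^{uN}(z^u-1)^{uN}\cdot z^h
=\frac{1}{(uN)!}\,w(z)^{-1}\partial_z^{uN}\bigl(w(z)(z^u-1)^{uN}z^h\bigr),$$
where $w(z)$ is a solution of $D=-(z^u-1)\partial_z-z^{u-1}$, namely $w(z)=(z^u-1)^{-1/u}$ (indeed $D\cdot f_0=0$ with $f_0=(z^u-1)^{-1/u}$, as noted in the example). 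Thus $w(z)(z^u-1)^{uN}=(z^u-1)^{uN-1/u}$, and
$$P_{uN,h}(z)=\frac{(z^u-1)^{1/u}}{(uN)!}\,\partial_z^{uN}\Bigl((z^u-1)^{uN-1/u}z^h\Bigr).$$
Now expand $(z^u-1)^{uN-1/u}=\sum_{s}\binom{uN-1/u}{s}(-1)^{uN-1/u-s}z^{us}$ — more carefully, $(z^u-1)^{m}=\sum_s \binom{m}{s}(-1)^{m-s}z^{us}$ with the binomial convention $\binom{a}{b}=(-1)^b(-a)_b/b!$ fixed just before the lemma, so that $(z^u-1)^{uN-1/u}=(-1)^{uN}\sum_{s}\binom{uN-1/u}{s}(-1)^s z^{us}$ after absorbing the $(z^u-1)^{-1/u}$ prefactor appropriately. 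Differentiating $uN$ times sends $z^{us+h}\mapsto (us+h)(us+h-1)\cdots(us+h-uN+1)z^{us+h-uN}$, and multiplying back by $(z^u-1)^{1/u}=(-1)^{?}\sum_r\binom{1/u}{r}(-1)^r z^{ur}$ and collecting the coefficient of $z^{uk+h}$ (with $k=s-N+r$, reindexed so $s$ runs and $k-s$ indexes the second binomial) produces exactly the triple sum in $(i)$, once one checks $\binom{u(s+N)+h}{uN}=\frac{(u(s+N)+h)!}{(uN)!(u(s+N)+h-uN)!}$ matches the falling-factorial normalization. The index ranges $0\le k\le N(u-1)$ come from $\deg P_{uN,h}=uN(u-1)+h=u\cdot N(u-1)+h$ together with $\mathrm{ord}$ considerations; I would verify the degree bound via Lemma $\ref{decompose}$ as well.

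For part $(ii)$, I would use $Q_{uN,l,h}(z)=\varphi_l\bigl((P_h(z)-P_h(t))/(z-t)\bigr)$. Writing $P_h(t)=(-1)^{uN}\sum_{m}c_m(-1)^{?}t^{um+h}$ with $c_m$ the bracketed coefficient from $(i)$, and using the elementary identity $(z^{M}-t^{M})/(z-t)=\sum_{i+j=M-1}z^it^j$, one gets $Q_{uN,l,h}(z)=\sum_{m}(\text{coeff})\sum_{j=0}^{um+h-1}\varphi_l(t^{j})\,z^{um+h-1-j}$. The key point is that $\varphi_l(t^j)$ vanishes unless $j\equiv l\pmod u$, in which case $j=uk+l$ and $\varphi_l(t^{uk+l})=\left(\tfrac{1+l}{u}\right)_k/\left(\tfrac{u+l}{u}\right)_k$ (this is exactly the computation $\varphi_l(t^k)$ displayed in the proof of Lemma $\ref{non vanish uN}$, coming from the definition $f_l(z)=\sum_k \frac{(\tfrac{1+l}{u})_k}{(\tfrac{u+l}{u})_k}z^{-uk-l-1}$). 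So the exponent of $z$ becomes $um+h-1-(uk+l)=u(m-k-1)+ (u+h-1-l)$; setting $v=m-k-1$ when $h\ge l+1$ (giving exponent $uv+h-l-1$) or absorbing a unit shift when $h<l+1$ (this is the role of $\tilde\varepsilon_{l,h}$, which shifts the lower limit of $v$) yields the stated formula after renaming the inner summation variable. I would carefully track the parity signs $(-1)^{k+v}$ and the requirement $j\le um+h-1$, which translates into $k+v\le (u-1)N$ as the upper bound on the inner sum.

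For part $(iii)$, the cleanest route is Lemma $\ref{coeff n+1}$-style reasoning combined with the Euler-integral computation already carried out in the proof of Lemma $\ref{non vanish uN}$: we have $\mathfrak{R}_{uN,l,h}(z)=\varphi_l\bigl(P_h(t)/(z-t)\bigr)=\sum_{k\ge uN}\varphi_l(t^k P_h(t))/z^{k+1}$, and $\varphi_l(t^k P_h(t))$ can be evaluated by the same Beta-integral trick used for $\varphi_l(t^l(t^u-1)^{uN+s})$. Concretely, $P_h(t)=\frac{(t^u-1)^{1/u}}{(uN)!}\partial_t^{uN}\bigl((t^u-1)^{uN-1/u}t^h\bigr)$, and one substitutes the integral representation $\frac{(\alpha)_v}{(\beta)_v}=\frac{\Gamma(\beta)}{\Gamma(\alpha)\Gamma(\beta-\alpha)}\int_0^1\xi^{\alpha+v-1}(1-\xi)^{\beta-\alpha-1}d\xi$ for each $\varphi_l(t^{\cdot})$ appearing, interchanges sum and integral, recognizes the binomial sum as a power of $(1-\xi)$, and reads off the closed form. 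This reproduces the overall factor $\left(\tfrac{u-1}{u}\right)_{uN}/\left(\tfrac{u+l}{u}\right)_{uN}$ and the series $\sum_k \binom{u(N+k)+l-h}{uN}\frac{(\tfrac{1+l}{u})_k}{(\tfrac{u+l}{u}+uN)_k}z^{-uk}$, with the lower limit $\varepsilon_{l,h}$ accounting for whether the $k=0$ term survives the binomial (it vanishes when $l-h<0$, i.e.\ $l<h$ forces $\binom{uN+l-h}{uN}=0$). The main obstacle throughout is bookkeeping: keeping the binomial-coefficient conventions $\binom{a}{b}=(-1)^b(-a)_b/b!$ consistent with the falling-factorial outputs of $\partial_z^{uN}$, tracking the numerous sign factors $(-1)^{uN}$, $(-1)^k$, $(-1)^{k+v}$, and correctly pinning down the summation ranges and the shift parameters $\tilde\varepsilon_{l,h}$, $\varepsilon_{l,h}$. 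No deep idea is needed beyond the factorized Rodrigues operator and the Euler-integral evaluation of $\varphi_l$; the difficulty is purely in executing the expansion without index or sign errors, so I would organize the computation so that $(i)$ is proved first, then $(ii)$ and the remainder identity in $(iii)$ are both deduced from $(i)$ via the kernel formula $P(z)f_l(z)-Q(z)=\varphi_l(P(t)/(z-t))$.
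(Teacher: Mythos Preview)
Your plan for parts $(i)$ and $(ii)$ matches the paper's proof exactly: factor the Rodrigues operator via Lemma~\ref{decompose} with the explicit weight $w(z)=(1-z^u)^{-1/u}$, expand as a power series in $z^u$, differentiate termwise, and for $(ii)$ feed the result into the difference quotient and use $\varphi_l(t^{k'})=0$ unless $k'\equiv l\pmod u$. The bookkeeping you flag (signs, ranges, the shift $\tilde\varepsilon_{l,h}$) is the only content; there is nothing to add.

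For part $(iii)$ there is a genuine gap in the step you label ``Concretely''. If you expand $P_h(t)$ via part $(i)$ and substitute the Euler integral for each $\varphi_l(t^{\cdot})$, one convolution factor does collapse to $(1-\xi)^{1/u}$ and cancels with the $(1-\xi)^{-1/u}$ from the integrand, but the remaining factor is
\[
\sum_{s}\binom{uN-1/u}{s+N}\binom{u(s+N)+h}{uN}(-\xi)^{s},
\]
and the extra $\binom{u(s+N)+h}{uN}$ prevents this from summing to a power of $(1-\xi)$; the subsequent $\xi$-integral does not close. The paper avoids this entirely by inserting an algebraic reduction \emph{before} the integral: using Proposition~\ref{key 1 2}\,$(i)$ (this is precisely the mechanism behind Lemma~\ref{coeff n+1}) together with $\mathcal{E}(t^u-1)\cdot K[t]\subseteq\ker\varphi_l$, one obtains for $k\ge uN$
\[
\varphi_l\bigl(t^k P_{uN,h}(t)\bigr)=(-1)^{uN}\binom{k}{uN}\,\varphi_l\bigl(t^{k-uN+h}(t^u-1)^{uN}\bigr).
\]
Only at this point is the Euler integral applied, and now the binomial sum is just the expansion of $(t^u-1)^{uN}$, which does collapse to $(1-\xi)^{uN}$ and yields the closed form $\left(\tfrac{u-1}{u}\right)_{uN}\left(\tfrac{1+l}{u}\right)_{\tilde k}\big/\left(\tfrac{u+l}{u}\right)_{uN+\tilde k}$. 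Your reference to ``Lemma~\ref{coeff n+1}-style reasoning'' is therefore exactly right in spirit, but you should replace the ``Concretely'' paragraph by this reduction; the direct substitution you describe does not produce the claimed simplification. Your explanation of $\varepsilon_{l,h}$ at the end is correct.
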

\begin{proof}
$(i)$ 
Put $$w(z)=(1-z^u)^{-1/u}=\sum_{k=0}^{\infty}\binom{-1/u}{k}(-z^u)^{k}\in K[[z]].$$
Then $w(z)$ is a solution of $-(z^u-1)\partial_z-z^{u-1}\in K[z,\partial_z]$. Lemma $\ref{decompose}$ yields 
$$
\dfrac{1}{(uN)!}\left(\partial_z-\dfrac{z^{u-1}}{z^u-1}\right)^{uN}(z^u-1)^{uN}=\dfrac{1}{(uN)!}w(z)^{-1}\partial^{uN}_zw(z)(z^u-1)^{uN},
$$
and therefore
\begin{align*}
P_{uN,h}(z)&=\dfrac{1}{(uN)!}w(z)^{-1}\partial^{uN}_z w(z) (z^u-1)^{uN}\cdot z^h\\
              &=\dfrac{(-1)^{uN}}{(uN)!}w(z)^{-1}\partial^{uN}_z\cdot \sum_{k=0}^{\infty}\binom{uN-1/u}{k}(-1)^{k}z^{uk+h}\\
&={(-1)^{uN}}\sum_{k=0}^{\infty}\binom{1/u}{k}(-1)^kz^{uk} \cdot \sum_{k=0}^{\infty}\binom{uN-1/u}{k+N}\binom{u(k+N)+h}{uN}(-1)^{k}z^{uk+h}\\
&=(-1)^{uN}\sum_{k=0}^{\infty}\left[\sum_{s=0}^k\binom{uN-1/u}{s+N}\binom{u(s+N)+h}{uN} \binom{1/u}{k-s}\right](-1)^kz^{uk+h}.
\end{align*}
Since ${\rm{deg}}\, P_{uN,h}=u(u-1)N+h$, using above equality, we obtain the assertion.

\bigskip

$(ii)$ Put $P_{uN,h}(z)=\sum_{k=0}^{u(u-1)N+h}p_kz^{k}$. Notice that, by item $(i)$, 
$$p_k=
\begin{cases}
(-1)^{uN+k'}\sum_{s=0}^{k'}\binom{uN-1/u}{s+N}\binom{u(s+N)+h}{uN} \binom{1/u}{k'-s}   & \ \ \text{if there exists} \ k'\ge 0 \  \text{such that} \ k=uk'+h,\\
0 & \ \ \text{otherwise} .
\end{cases}
$$
Then we have 
\begin{align*}
\dfrac{P_{uN,h}(z)-P_{uN,h}(t)}{z-t}&=\sum_{k'=1}^{u(u-1)N+h}p_{k'}\sum_{v'=0}^{k'-1}z^{v'}t^{k'-v'-1}=\sum_{k'=0}^{u(u-1)N+h-1}p_{k'+1}\sum_{v'=0}^{k'}z^{v'}t^{k'-v'}\\
&=\sum_{v'=0}^{u(u-1)N+h-1}\left[\sum_{k'=v'}^{u(u-1)N+h-1}p_{k'+1}t^{k'-v'}\right]z^{v'}\\
&=\sum_{v'=0}^{u(u-1)N+h-1}\left[\sum_{k'=0}^{u(u-1)N+h-v'-1}p_{k'+v'+1}t^{k'}\right]z^{v'}.
\end{align*}
Since $\varphi_l(t^{k'})=0$ if $k' \not\equiv l \ {\rm{mod}} \ u$, putting $k'=uk+l$, we obtain
\begin{align*}
Q_{uN,l,h}(z)&=\varphi_l\left(\dfrac{P_{uN,h}(z)-P_{uN,h}(t)}{z-t}\right)\\
&=\sum_{v'=0}^{u(u-1)N+h-1}\left[\sum_{k'=0}^{u(u-1)N+h-v'-1}p_{k'+v'+1}\varphi_l(t^{k'})\right]z^{v'}\\
&=\sum_{v'=0}^{u(u-1)N+h-1}\left[\sum_{k=0}^{(u-1)N+\lfloor (h-v'-l-1)/u\rfloor}p_{uk+l+v'+1}\dfrac{\left(\tfrac{1+l}{u}\right)_k}{\left(\tfrac{u+l}{u}\right)_k}\right]z^{v'}.
\end{align*}
Since we have $p_{uk+l+v'+1}=0$ for $0\le v'\le u(u-1)N+h-1$ with $v'\notin u\Z+h-l-1$, putting $v'=uv+h-l-1$, 
we conclude
\begin{align*}
Q_{uN,l,h}(z)&=\sum_{v=\tilde{\varepsilon}_{l,h}}^{(u-1)N}\left[\sum_{k=0}^{(u-1)N-v}p_{u(k+v)+h}\dfrac{\left(\tfrac{1+l}{u}\right)_k}{\left(\tfrac{u+l}{u}\right)_k}\right]
z^{uv+h-l-1}.
\end{align*}
This completes the proof of item $(ii)$.

\bigskip

$(iii)$ Lemma $\ref{coeff n+1}$ yields 
\begin{align} \label{RuNlh}
\mathfrak{R}_{uN,l,h}(z)=\sum_{k=uN}^{\infty}\dfrac{\varphi_l(t^kP_{uN,h}(t))}{z^{k+1}}.
\end{align}
We now compute $\varphi_l(t^kP_{uN,h}(t))$ for $k\ge uN$.
Put $\mathcal{E}=\partial_t-t^{u-1}/(t^u-1)$. 
Using Proposition $\ref{key 1 2}$ $(i)$ for $k\ge uN$, there exists a set of integers $\{c_{uN,k,v}\mid v=0,1,\ldots ,uN\}$ with 
\begin{align*}
&c_{uN,k,uN}=(-1)^{uN}k(k-1)\cdots (k-uN+1) \ \ \text{and} \\
&t^k \mathcal{E}^{uN}(t^u-1)^{uN}=\sum_{v=0}^{uN}c_{uN,k,v}\mathcal{E}^{uN-v}t^{k-v}(t^u-1)^{uN} \ \ \text{in} \ \ \Q(t)[\partial_t].
\end{align*}
Since $\mathcal{E}(t^u-1)\subseteq  {\rm{ker}}\,\varphi_l$, using above relation,
\begin{align}
\varphi_l(t^kP_{uN,h}(t))&=\varphi_l\left(\dfrac{t^k}{(uN)!}\mathcal{E}^{uN}(t^u-1)^{uN}\cdot t^h\right)=\varphi_l\left(\sum_{v=0}^{uN}\dfrac{c_{uN,k,v}}{(uN)!}\mathcal{E}^{uN-v}t^{k-v}(t^u-1)^{uN}\cdot t^h\right) \label{tkP}\\
&=\varphi_l\left(\dfrac{c_{uN,k,uN}}{(uN)!}t^{k-uN}(t^u-1)^{uN}\cdot t^h\right)=(-1)^{uN}\binom{k}{uN}\varphi_l(t^{k-uN+h}(t^u-1)^{uN}). \nonumber
\end{align}
Note we have $\varphi_l(t^{k-uN+h}(t^u-1)^{uN})=0$ if $k-uN+h \not\equiv l \ {\rm{mod}} \ u$.
Let $\tilde{k} \ge 0$ and put $k=u(\tilde{k}+N+\varepsilon_{l,h})+l-h$. A similar computation which we performed in Equation $(\ref{Gamma rep})$ implies  
\begin{align*}
\varphi_l(t^{k-uN+h}(t^u-1)^{uN})&=\varphi_l(t^{u(\tilde{k}+\varepsilon_{l,h})+l}(t^u-1)^{uN})\\
&=\dfrac{(-1)^{uN}\left(\tfrac{u-1}{u}\right)_{uN} \left(\tfrac{1+l}{u}\right)_{\tilde{k}+\varepsilon_{l,h}}}{\left(\tfrac{u+l}{u}\right)_{uN+\tilde{k}+\varepsilon_{l,h}}}=
\dfrac{(-1)^{uN}\left(\tfrac{u-1}{u}\right)_{uN} \left(\tfrac{1+l}{u}\right)_{\tilde{k}+\varepsilon_{l,h}}}{\left(\tfrac{u+l}{u}\right)_{uN}\left(\tfrac{u+l}{u}+uN\right)_{\tilde{k}+\varepsilon_{l,h}}}
.
\end{align*}
Substituting above equality into Equations \eqref{tkP} and $(\ref{RuNlh})$, we obtain the desire equality.
\end{proof}
In the following, for a rational number $a$ and a nonnegative integer $n$, we put 
$$\mu_n(a)={\rm{den}}(a)^n\prod_{\substack{q:\mathrm{prime} \\ q|{\rm{den}}(a)}}q^{\left\lfloor \tfrac{n}{q-1}\right\rfloor} .$$
Notice that $\mu_n(a)=\mu_n(a+k)$ for $k\in \Z$ and
\begin{align} \label{divide}
\mu_{n_2}(a) \ \ \text{is divisible by} \ \ \mu_{n_1}(a) \ \ \ \text{and} \ \ \ \mu_{n_1+n_2}(a) \ \ \text{is divisible by} \ \ \mu_{n_1}(a)\mu_{n_2}(a)
\end{align}
for $n,n_1,n_2\in \N$ with $n_1\le n_2$.
\begin{lemma} \label{P,Q}
Let $K$ be an algebraic number field, $v$ a place of $K$ and $\alpha\in K\setminus\{0\}$. 

$(i)$ We have 
$$\max_{0\le h \le u-1}\log\,|P_{uN,h}(\alpha)|_{v}\le o(N)+u(u-1){\rm{h}}_{v}(\alpha)N+
\begin{cases}
u(u+1){\rm{h}}_v(2)N & \ \text{if} \ v\mid \infty\\ 
\log\,|\mu_{uN}(1/u)|^{-1}_{v} & \ \text{if} \ v \nmid \infty.
\end{cases}$$
({The function $o(N)$ is equal to $0$ for almost all places $v$. This also holds in the statement $(ii)$.})

$(ii)$ For $0\le l \le u-2$, put $$D_{N}={\rm{den}}\left(\dfrac{\left(\tfrac{1+l}{u}\right)_k}{\left(\tfrac{u+l}{u}\right)_k}\right)_{\substack{0\le l \le u-2 \\ 0 \le k \le (u-1)N}}.$$
Then,
{\small{\begin{align*}
\max_{\substack{0\le l \le u-2 \\ 0\le h \le u-1}}\log\,|Q_{uN,l,h}(\alpha)|_{v}&\le o(N)+u(u-1){\rm{h}}_v(\alpha)N+
\begin{cases}
u(u+1){\rm{h}}_v(2)N  & \ \text{if} \ v \mid \infty\\
\log\,|\mu_{uN}(1/u)|^{-1}_{v}+\log\,|D_{N}|^{-1}_v & \ \text{if} \ v \nmid \infty.
\end{cases}
\end{align*}}}
\end{lemma}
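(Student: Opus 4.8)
## Proof Plan for Lemma \ref{P,Q}

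The plan is to feed the explicit polynomial expressions from Lemma \ref{P,Q,R} into the $v$-adic absolute value and estimate each factor separately, distinguishing archimedean and nonarchimedean places. First I would recall from Lemma \ref{P,Q,R}\,$(i)$ that $P_{uN,h}(z)$ has at most $N(u-1)+1 = o(N)\cdot(\text{something})+O(N)$ many monomials, each of degree at most $u(u-1)N+h$, and with coefficient
$$
(-1)^{uN+k}\sum_{s=0}^k\binom{uN-1/u}{s+N}\binom{u(s+N)+h}{uN}\binom{1/u}{k-s}.
$$
The factor $\binom{u(s+N)+h}{uN}$ is an ordinary integer binomial coefficient, hence bounded above in archimedean absolute value by $2^{u(s+N)+h}\le 2^{u\cdot u(u-1)N/u+O(1)}$, which after taking $\log$ and multiplying by $[K_v:\R]/[K:\Q]$ contributes the $u(u+1)\mathrm{h}_v(2)N$ term (being generous with the exponent $u(s+N)+h\le u^2 N+O(1)$ and absorbing the discrepancy into $o(N)$ — or more carefully, noting $s\le k\le (u-1)N$ so $u(s+N)+h\le u^2N+O(1)$, giving exactly the stated constant up to $o(N)$). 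For the generalized binomials $\binom{uN-1/u}{s+N}$ and $\binom{1/u}{k-s}$, which by the convention $\binom{a}{b}=(-1)^b(-a)_b/b!$ introduced just before Lemma \ref{P,Q,R} are rational numbers, I would bound their archimedean size by a crude polynomial-in-$N$ estimate (absorbed into $o(N)$) and, crucially, control their denominators: the denominator of $\binom{uN-1/u}{s+N}$ divides $\mu_{s+N}(1/u)$ which divides $\mu_{uN}(1/u)$ by \eqref{divide}, and similarly the denominator of $\binom{1/u}{k-s}$ divides $\mu_{k-s}(1/u)\mid \mu_{uN}(1/u)$; multiplying the two, their product divides $\mu_{uN}(1/u)$ (again by the second divisibility in \eqref{divide}, since $(s+N)+(k-s)=k+N\le uN$). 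Evaluating at $\alpha$ and summing at most $O(N)$ terms: in the nonarchimedean case the ultrametric inequality gives $|P_{uN,h}(\alpha)|_v\le \max_k |p_k|_v|\alpha|_v^{uk+h}$, the power of $\alpha$ contributes $u(u-1)\mathrm{h}_v(\alpha)N+o(N)$, and clearing the common denominator $\mu_{uN}(1/u)$ yields the $\log|\mu_{uN}(1/u)|_v^{-1}$ term; in the archimedean case the triangle inequality adds only a $\log(O(N))=o(N)$ loss on top of the per-term bound.

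Part $(ii)$ is entirely analogous but with one extra ingredient: the coefficients of $Q_{uN,l,h}(z)$ from Lemma \ref{P,Q,R}\,$(ii)$ are sums over $k$ of the $P$-coefficients times $\left(\tfrac{1+l}{u}\right)_k/\left(\tfrac{u+l}{u}\right)_k$. The new rational factor $\left(\tfrac{1+l}{u}\right)_k/\left(\tfrac{u+l}{u}\right)_k$ for $0\le k\le (u-1)N$ has denominator dividing $D_N$ by definition, and has archimedean absolute value at most $1$ (it is a ratio of products of positive reals with numerator factors $\le$ denominator factors when $l\le u-2$, so $(1+l)/u<(u+l)/u$; more precisely each such ratio lies in $(0,1]$). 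So in the nonarchimedean case I pick up the additional $\log|D_N|_v^{-1}$ term, the common denominator now being $\mu_{uN}(1/u)D_N$; in the archimedean case the extra factor only helps. The degree of $Q_{uN,l,h}$ in $z$ is $uv+h-l-1\le u(u-1)N+O(1)$, giving the same $u(u-1)\mathrm{h}_v(\alpha)N$ leading term; the number of monomials is again $O(N)$, contributing $o(N)$; and the binomial-coefficient factors are handled exactly as in $(i)$, contributing the $u(u+1)\mathrm{h}_v(2)N$ archimedean term and the $\mu_{uN}(1/u)$ denominator.

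The main obstacle I anticipate is bookkeeping rather than conceptual: making sure the exponents of $2$ and of $\alpha$ come out with precisely the stated constants $u(u+1)$ and $u(u-1)$ rather than something slightly larger, which requires carefully tracking that the integer binomial $\binom{u(s+N)+h}{uN}$ has top argument $u(s+N)+h\le u\cdot(u-1)N + uN + O(1)= u^2N+O(1)$ — wait, this would give $u^2$, not $u(u+1)$; so in fact one must use $s\le k-(k-s)$ and the finer bound $u(s+N)\le u\cdot(u-1)N$ only for the $\binom{1/u}{k-s}$-free part, i.e.\ recognize that $s+N\le uN$ forces $u(s+N)+h\le u^2N+O(1)$ and accept that the clean constant $u(u+1)$ emerges only after noting $|\alpha|_v\ge 1$ contributions and $2$-contributions are each bounded by the \emph{degree} $u(u-1)N$ of the polynomial when we factor $|\alpha|_v^{uk}\le|\alpha|_v^{u(u-1)N}$ and bound $|\binom{\cdot}{\cdot}|_v\le 2^{\deg}$ uniformly. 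The safe route is to bound every binomial coefficient appearing, integer or generalized, by $2^{u(u-1)N+o(N)}$ in archimedean absolute value using $\binom{m}{j}\le 2^m$ together with $|(-a)_j/j!|\le \prod_{i<j}(|a|+i)/j!\le (|a|+j)^j/j!$ which is $2^{o(N)}$ here since $|a|=O(1)$ and $j=O(N)$ — actually $(O(N))^{O(N)}/(O(N))!$ is $2^{O(N)}$ by Stirling, so the generalized binomials contribute only $O(N)$, cleanly absorbed — and reserve the $2^{u(u+1)\mathrm{h}_v(2)N}$ budget for $\prod|\alpha|_v^{uk+h}\cdot 2^{u(u-1)N}$ combined, reconciling the constants by the elementary inequality $u(u-1)+2u=u(u+1)$ applied after splitting $|\alpha|_v^{uk}$ off and absorbing a $2^{2uN}$ slack. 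Getting this accounting exactly right, and confirming the $o(N)$ terms vanish for all but finitely many $v$ (true because $\mu_{uN}(1/u)$ and $D_N$ are supported on finitely many primes and $\alpha$ is a fixed algebraic number), is the only real work.
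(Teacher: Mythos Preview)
Your plan is the paper's plan: substitute the explicit coefficients from Lemma~\ref{P,Q,R} into $|\cdot|_v$, bound each binomial factor, and sum. The nonarchimedean half is correct and essentially identical to the paper's (which cites \cite[Lemma~2.2]{B} for $|(a)_k/k!|_v\le |\mu_n(a)|_v^{-1}$, then uses the divisibility~\eqref{divide} exactly as you do to pass from $\mu_{k+N}(1/u)$ to $\mu_{uN}(1/u)$). Part~$(ii)$ is also right: the extra Pochhammer ratio has archimedean absolute value $\le 1$ and $v$-adic absolute value $\le |D_N|_v^{-1}$, and nothing else changes.

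The one real flaw is in your archimedean accounting for $(i)$. You first assert that the generalized binomial $\binom{uN-1/u}{s+N}$ admits ``a crude polynomial-in-$N$ estimate (absorbed into $o(N)$)''. This is false: for $s+N$ near $uN/2$ it is of order $2^{uN}$, so its logarithm is $\Theta(N)$, not $o(N)$. You later notice this (``$2^{O(N)}$ by Stirling'') and grope toward the right constant, but never pin it down cleanly. The paper sidesteps the whole difficulty with two sharp elementary bounds: since $0<uN-1/u<uN$ and $s+N\le uN$ one has $\binom{uN-1/u}{s+N}\le\binom{uN}{s+N}\le 2^{uN}$, and $\bigl|\binom{1/u}{m}\bigr|\le 1$ for every $m\ge 0$. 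With these two inputs and $\binom{u(s+N)+h}{uN}\le 2^{u(s+N)+h}$, the $k$th coefficient is at most $2^{2uN+h+u(k+1)}$, the sum over $k\le (u-1)N$ then pairs $2^{uk}$ with $|\alpha|_v^{uk}$, and the identity $2u+u(u-1)=u(u+1)$ you eventually wrote down falls out immediately. So your plan is salvageable, but you should replace the ``polynomial-in-$N$'' claim by the bound $\binom{uN-1/u}{s+N}\le 2^{uN}$ and drop the Stirling detour.
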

\begin{proof}
$(i)$ Let $v$ be an archimedean place. 
Since we have 
\begin{align*}
\binom{uN-1/u}{s+N}
\le 2^{uN}, \ \  \ \binom{u(s+N)+h}{uN}\le 2^{u(s+N)+h} \ \ \text{and} \ \ \left|\binom{1/u}{k-s}\right|\le 1 ,
\end{align*}
for $0\le k \le N(u-1)$ and $0\le s \le k$, we obtain 
\begin{align} \label{trivial estimates}
\left|\sum_{s=0}^k\binom{uN-1/u}{s+N}\binom{u(s+N)+h}{uN}\binom{1/u}{k-s}\right|\le 2^{2uN+h}\sum_{s=0}^k2^{us}\le 2^{2uN+h+u(k+1)}. 
\end{align}
Thus, by Lemma $\ref{P,Q,R}$ $(i)$, 
\begin{align*}
|P_{uN,h}(\alpha)|_{v}\le |2^{2uN+h}|_{v}\cdot \left|\sum_{k=0}^{N(u-1)}2^{u(k+1)}\alpha^{uk+h}\right|_{v}\le e^{o(N)}|2|^{u(u+1)N}_v \max\{1,|\alpha|_v\}^{u(u-1)N}.
\end{align*}
This completes the proof of the archimedean case.

Second, we consider the case of $v$ is a nonarchimedean place. 
Note that 
$$\binom{uN-1/u}{s+N}=\dfrac{(-1)^{s+N}(1/u-uN)_{s+N}}{(s+N)!} \ \ \ \text{and} \ \ \ \binom{1/u}{k-s}=\dfrac{(-1)^{k-s}(-1/u)_{k-s}}{(k-s)!}$$
for $0\le k \le N(u-1), \ 0\le s \le k$. 
Combining $$\left|\dfrac{(a)_k}{k!}\right|_v\le |\mu_n(a)|^{-1}_v \ \ \text{for} \ \ a\in \Q \ \ \text{and} \ \ k,n\in \N \ \ \text{with} \ \ k\le n,$$
(see \cite[Lemma $2.2$]{B}) 
and \eqref{divide} yields 
$$\left|\binom{uN-1/u}{s+N}\binom{u(s+N)+h}{uN} \binom{1/u}{k-s}\right|_v\le |\mu_{k+N}(1/u)|^{-1}_v \ \ \ \text{for} \ \ \ 0\le k \le (u-1)N.$$
Therefore the strong triangle inequality yields
\begin{align} \label{trivial 2}
\max_{0\le k \le N(u-1)}\left|\sum_{s=0}^k\binom{uN-1/u}{s+N}\binom{u(s+N)+h}{uN} \binom{1/u}{k-s}\right|_v\le |\mu_{uN}(1/u)|^{-1}_v.
\end{align}
Use Lemma $\ref{P,Q,R}$ $(i)$ again, we conclude the desire inequality. 

\bigskip

$(ii)$
Let $v$ be an archimedean place. We use the same notation as in the proof of Lemma $\ref{P,Q,R}$ $(ii)$.
Using Equation $(\ref{trivial estimates})$ again, we obtain 
\begin{align*}
\left|\sum_{k=0}^{(u-1)N-v}p_{u(k+v)+h}\dfrac{\left(\tfrac{1+l}{u}\right)_k}{\left(\tfrac{u+l}{u}\right)_k}\right|_v&\le |2|^{2uN+h+u(v+1)}_{v}\sum_{k=0}^{N(u-1)-v}|2|^{uk}_v\\
&\le |2|^{2uN+u(u-1)N+u+h}_{v}.
\end{align*}
Lemma $\ref{P,Q,R}$ $(ii)$ implies that
\begin{align*}
|Q_{uN,l,h}(\alpha)|_v&\le 
\sum_{v=0}^{(u-1)N}\left|\left[\sum_{k=0}^{(u-1)N-v}p_{u(k+v)+h}\dfrac{\left(\tfrac{1+l}{u}\right)_k}{\left(\tfrac{u+l}{u}\right)_k}\right]\right|_v
|\alpha|^{uv+h-l-1}_v\\
&\le e^{o(N)}|2|^{u(u+1)N}_v \max\{1,|\alpha|_v\}^{u(u-1)N}.
\end{align*}

Let $v$ be a nonarchimedean place. 
Then by the definition of $D_N$, 
\begin{align*}
\max_{\substack{0 \le l \le u-2 \\ 0 \le k \le (u-1)N}}\left(\left| \dfrac{\left(\tfrac{1+l}{u}\right)_k}{\left(\tfrac{u+l}{u}\right)_k}\right|_v\right)\le |D_N|^{-1}_v,
\end{align*}
for all $N\in \N$.
Using above inequality and $(\ref{trivial 2})$ for Lemma $\ref{P,Q,R}$ $(ii)$, we obtain the desire inequality by the strong triangle inequality.
This completes the proof of Lemma $\ref{P,Q}$.
\end{proof}

\begin{lemma} \label{R}
Let $K$ be an algebraic number field, $v_0$ a place of $K$, $\alpha\in K$. Let $N,l,h$ be nonnegative integers with $0\le l \le u-2$ and $0\le h \le u-1$.

$(i)$ Assume $v_0$ is an archimedean place and $|\alpha|_{v_0}>2$. We have 
$$\max_{\substack{0\le l \le u-2 \\ 0\le h \le u-1}} \log\,|\mathfrak{R}_{uN,l,h}(\alpha)|_{v_0} \le -u({\rm{h}}_{v_0}(\alpha)-{\rm{h}}_{v_0}(2))N+o(N) .$$

$(ii)$ Assume $v_0$ is a nonarchimedean place and $|\alpha|_{v_0}>1$. Let $p_{v_0}$ be the rational prime under $v_0$. 
Put $\varepsilon_{v_0}(u)=1$ if $u$ is coprime with $p_{v_0}$ and $\varepsilon_{v_0}(u)=0$ if $u$ is divisible by $p_{v_0}$.
We have 
$$
\max_{\substack{0\le l \le u-2 \\ 0\le h \le u-1}} \log\,|\mathfrak{R}_{uN,l,h}(\alpha)|_{v_0} \le -u\left({\rm{h}}_{v_0}(\alpha)-\dfrac{\varepsilon_{{v_0}}(u) \log\,|p_{v_0}|_{v_0}}{p_{v_0}-1}\right)N+o(N) .
$$
\end{lemma}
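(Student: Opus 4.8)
The plan is to read off a closed form for the remainder from Lemma~\ref{P,Q,R}~$(iii)$, namely
$$\mathfrak{R}_{uN,l,h}(\alpha)=\frac{\left(\tfrac{u-1}{u}\right)_{uN}}{\left(\tfrac{u+l}{u}\right)_{uN}\,\alpha^{\,uN+l-h+1}}\sum_{k\ge\varepsilon_{l,h}}\binom{u(N+k)+l-h}{uN}\frac{\left(\tfrac{1+l}{u}\right)_{k}}{\left(\tfrac{u+l}{u}+uN\right)_{k}}\,\frac{1}{\alpha^{uk}},$$
and to estimate separately the three ingredients: the monomial $\alpha^{-(uN+l-h+1)}$, the Pochhammer prefactor $\left(\tfrac{u-1}{u}\right)_{uN}/\left(\tfrac{u+l}{u}\right)_{uN}$, and the tail series, showing that the series is controlled by its leading term up to a factor $e^{o(N)}$. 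Since $l$ and $h$ stay bounded and $|\alpha|_{v_0}>1$, the monomial contributes $-uN\,{\rm h}_{v_0}(\alpha)+o(N)$ in both cases. This reduces the proof to bounding, uniformly in $N$ (and, in the tail, in $k$), the $v_0$-adic sizes of quotients of Pochhammer symbols and of binomial coefficients.

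For the archimedean bound $(i)$ I would use the crude estimates $\binom{u(N+k)+l-h}{uN}\le 2^{u(N+k)+l-h}$ and $\left(\tfrac{1+l}{u}\right)_{k}/\left(\tfrac{u+l}{u}+uN\right)_{k}<1$ (each factor in the numerator being smaller than the corresponding factor in the denominator), so that the tail is dominated by the convergent geometric series $\sum_{k\ge0}\bigl(|2|_{v_0}/|\alpha|_{v_0}\bigr)^{uk}$, the convergence being exactly where $|\alpha|_{v_0}>2\ge|2|_{v_0}$ is used. Its sum is $O(1)$, so the tail contributes $uN\,{\rm h}_{v_0}(2)+o(N)$ through the factor $|2|_{v_0}^{uN}$ coming from the binomial coefficients; the prefactor contributes $o(N)$, either crudely (it is $\le1$) or, more precisely, via the Beta-integral identity already used in the proof of Lemma~\ref{non vanish uN}, which gives $\left(\tfrac{u-1}{u}\right)_{uN}/\left(\tfrac{u+l}{u}\right)_{uN}\asymp(uN)^{-(l+1)/u}$. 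Collecting the three pieces yields $-uN\bigl({\rm h}_{v_0}(\alpha)-{\rm h}_{v_0}(2)\bigr)+o(N)$.

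For the nonarchimedean bound $(ii)$ the binomial coefficients are rational integers, hence $|\,\cdot\,|_{v_0}\le1$, and the strong triangle inequality reduces the problem to bounding $\bigl|\left(\tfrac{u-1}{u}\right)_{uN}/\left(\tfrac{u+l}{u}\right)_{uN}\bigr|_{v_0}$ and $\max_{k\ge\varepsilon_{l,h}}\bigl|\left(\tfrac{1+l}{u}\right)_{k}/\left(\tfrac{u+l}{u}+uN\right)_{k}\bigr|_{v_0}\,|\alpha|_{v_0}^{-uk}$. Writing each Pochhammer symbol as $u^{-\bullet}$ times a product of integers running along an arithmetic progression of common difference $u$, I would compute the $p_{v_0}$-adic valuations by the classical Legendre/de~Polignac count: when $p_{v_0}\nmid u$ the valuation of such a product of length $L$ is $L/(p_{v_0}-1)+O(\log L)$, whereas when $p_{v_0}\mid u$ no factor of the form $ui-1$ is divisible by $p_{v_0}$; this is what produces the factor $\varepsilon_{v_0}(u)$ and the constant $\log|p_{v_0}|_{v_0}/(p_{v_0}-1)$. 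For the tail, the geometric decay of $|\alpha|_{v_0}^{-uk}$ dominates the at-most-$|p_{v_0}|_{v_0}^{-k/(p_{v_0}-1)}$ growth of the coefficients (using $|\alpha|_{v_0}>1$, $u\ge2$, and $\log p/(p-1)<1$), so the maximum over $k$ is attained for bounded $k$ and is $e^{o(N)}$; the estimate of Lemma~\ref{valuation}, through \cite[Lemma~2.2]{B}, takes care of the residual contributions.

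The step I expect to be the main obstacle is the nonarchimedean valuation bookkeeping: one has to keep the Legendre-type count uniform in $k$ right out to the tail of the series (so that the $\max_k$ really is subexponential in $N$), and one has to compare precisely the $p_{v_0}$-adic valuations of the competing products $\prod_{i\le uN}(ui-1)$, $\prod_{i\le uN+k}(ui+l)$ and the factorials concealed in the binomial coefficients, so that exactly the density $1/(p_{v_0}-1)$—and no spurious loss of the wrong sign—survives after the cancellations. The archimedean case is routine once the geometric-series bound is in place.
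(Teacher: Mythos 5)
Your archimedean argument is essentially the paper's own proof of part $(i)$: the bound $\binom{u(N+k)+l-h}{uN}\le 2^{u(N+k)+l-h}$, the remark that $\bigl(\tfrac{1+l}{u}\bigr)_k/\bigl(\tfrac{u+l}{u}+uN\bigr)_k$ has archimedean absolute value at most $1$, the geometric series in $|2/\alpha|_{v_0}^{u}$ (this is where $|\alpha|_{v_0}>2$ enters), and the observation that the Pochhammer prefactor and the bounded shifts $l,h$ only contribute $e^{o(N)}$, are exactly the steps in the paper, so that half is fine.

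The nonarchimedean half has a genuine gap, and it sits precisely where you yourself flag the ``main obstacle''. The paper does not prove part $(ii)$ by a Legendre/de Polignac count: it imports the $p$-adic estimates of Delaygue--Rivoal--Roques \cite{DRR} (Proposition $4$ and Lemma $4$, their $(6.1)$ and $(6.2)$), namely a bound for $\max_{l}\bigl|\bigl(\tfrac{u-1}{u}\bigr)_{uN}/\bigl(\tfrac{u+l}{u}\bigr)_{uN}\bigr|_{v_0}$ in terms of $\varepsilon_{v_0}(u)\,v_{p_{v_0}}((uN)!)$ together with the statement that the whole $k$-sum has $v_0$-absolute value $e^{o(1)}$, and then uses $v_{p}((uN)!)=uN/(p-1)+o(N)$. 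Your proposed elementary bookkeeping cannot reproduce the exponent $\varepsilon_{v_0}(u)\log|p_{v_0}|_{v_0}/(p_{v_0}-1)$: when $p_{v_0}\nmid u$, \emph{both} the numerator product $\prod_{i\le uN}(ui-1)$ and the denominator product $\prod_{i\le uN}(ui+l)$ are products of $uN$ terms of arithmetic progressions with common difference prime to $p_{v_0}$, so each has valuation $uN/(p_{v_0}-1)+O(\log N)$, and in the ratio these density-$1/(p_{v_0}-1)$ contributions cancel, leaving only $O(\log N)=o(N)$ from the naive count --- no term proportional to $N/(p_{v_0}-1)$ survives. (Concretely, for $u=2$, $l=0$ the prefactor is $\binom{4N}{2N}4^{-2N}$, whose valuation at a prime $p\neq 2$ is the number of carries in the base-$p$ addition $2N+2N$, i.e.\ $O(\log N)$.) So ``keeping exactly the density $1/(p-1)$ after the cancellations'' is not a refinement of the count; it is the content of the Dwork-type result the paper quotes, and your sketch neither invokes such a result nor supplies a substitute, which means the stated inequality of part $(ii)$ is not established by your argument. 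A secondary issue, also absorbed in the paper by the citation of \cite{DRR} $(6.2)$: your claim that the tail maximum is $e^{o(N)}$ ``using $|\alpha|_{v_0}>1$, $u\ge 2$ and $\log p/(p-1)<1$'' needs more care with the normalized absolute values, since the coefficients $\bigl(\tfrac{1+l}{u}\bigr)_k/\bigl(\tfrac{u+l}{u}+uN\bigr)_k$ can grow $v_0$-adically (for instance through $|u|_{v_0}^{-k}$ when $p_{v_0}\mid u$, or through the denominator Pochhammer), so the comparison with the decay of $|\alpha|_{v_0}^{-uk}$ has to be made quantitatively and uniformly in $N$, not just asserted.
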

\begin{proof}
$(i)$ For a nonnegative integer $k$, we have $\binom{u(N+k)+l-h}{uN}\le 2^{u(N+k)+l-h}$.
Thus,
{\small{\begin{align*}
\left|{\displaystyle{\sum_{k=\varepsilon_{l,h}}^{\infty}}}\binom{u(N+k)+l-h}{uN}\dfrac{\left(\tfrac{1+l}{u}\right)_{k}}{\left(\tfrac{u+l}{u}+uN\right)_{k}}\dfrac{1}{\alpha^{uk}}\right|_{v_0}
&\le|2^{uN+l-h}|_{v_0}{\displaystyle{\sum_{k=\varepsilon_{l,h}}^{\infty}}}\left|\dfrac{\left(\tfrac{1+l}{u}\right)_{k}}{\left(\tfrac{u+l}{u}+uN\right)_{k}}\right|_{v_0}\left|\dfrac{2}{\alpha}\right|^{uk}_{v_0}\\
&\le |2^{uN+l-h}|_{v_0}{\displaystyle{\sum_{k=0}^{\infty}}}\left|\dfrac{2}{\alpha}\right|^{uk}_{v_0}= |2^{uN}|_{v_0}e^{o(N)}.
\end{align*}}}
Using above inequality in Lemma $\ref{P,Q,R}$ $(ii)$, we obtain the assertion.

$(ii)$ 
By {\rm{\cite[Proposition $4$,\, Lemma $4$]{DRR} (loc. cit. $(6.1)$, $(6.2)$)}}, 
\begin{align*}
&\max_{0\le l \le u-2}\, \left(\left| \dfrac{\left(\tfrac{u-1}{u}\right)_{uN}}{\left(\tfrac{u+l}{u}\right)_{uN}} \right|_{v_0} \right) \le 
|p_{v_0}|^{\varepsilon_{{{v_0}}}(u)v_{p_{v_0}}((uN)!)+o(N)}_{v_0},\\
&\left|\displaystyle{\sum_{k=\varepsilon_{l,h}}^{\infty}}\binom{u(N+k)+l-h}{uN}\dfrac{\left(\tfrac{1+l}{u}\right)_{k}}{\left(\tfrac{u+l}{u}+uN\right)_{k}}\dfrac{1}{\alpha^{uk}}\right|_v=e^{o(1)}.
\end{align*}
Combining $v_p((uN)!)=uN/(p-1)+o(N)$ and above inequality in Lemma $\ref{P,Q,R}$ $(ii)$, we obtain the assertion.
This completes the proof of Lemma $\ref{R}$.
\end{proof}

\subsection{Proof of Theorem $\ref{special hypergeometric}$}
\begin{proof}
We use the same notation as in Theorem $\ref{special hypergeometric}$.
Let $\alpha\in K$ with $|\alpha|_{v_0}>1$.
For a nonnegative integer $N$, we define a matrix 
$${\rm{M}}_N=
\begin{pmatrix} 
P_{uN,0}(\alpha) & \cdots & P_{uN,u-1}(\alpha)\\ 
Q_{uN,0,0} (\alpha) & \cdots & Q_{uN,0,u-1} (\alpha)\\
\vdots & \ddots & \vdots\\
Q_{uN,u-2,0} (\alpha) & \cdots & Q_{uN,u-2,u-1} (\alpha)
\end{pmatrix}\in {\rm{M}}_u(K)
.$$ 
By Lemma $\ref{non vanish uN}$, the matrices ${\rm{M}}_N$ are invertible for every $N$. 
We define functions 
$$F_v:\N\longrightarrow \R_{\ge0}; \ N\mapsto  u(u-1){\rm{h}}_v(\alpha)N
+o(N)+
\begin{cases}
u(u+1){\rm{h}}_v(2)N  & \ \text{if} \ v \mid \infty\\
\log\,|\mu_{uN}(1/u)|^{-1}_{v}+\log\,|D_N|^{-1}_v & \ \text{if} \ v \nmid \infty,
\end{cases}
$$
for $v\in \mathfrak{M}_K$. By Lemma $\ref{valuation}$, 
\begin{align*}
&\lim_{N\to \infty}\dfrac{1}{N}\log\,D_N\le (u-1)\left(\log\,\nu(u)+\dfrac{u}{\varphi(u)}\right),
\end{align*} where $D_N$ is the integer defined in Lemma $\ref{P,Q}$,  
\begin{align*}
&\lim_{N\to \infty}\dfrac{1}{N}\left(\sum_{v\neq v_0}F_v(N)\right)\le u\mathbb{B}_{v_0}(\alpha),
\end{align*}
and, by Lemma $\ref{P,Q}$,
\begin{align*}
&\max_{\substack{0\le h \le u-1}}\log\, \max\{|P_{uN,h}(\alpha)|_{v_0}\}\le u U_{v_0}(\alpha)N+o(N) ,\\
&\max_{\substack{0\le l \le u-2 \\ 0\le h \le u-1}}\log\, \max\{|P_{uN,h}(\alpha)|_v, |Q_{uN,l,h}(\alpha)|_{v}\}\le F_{v}(N) \ \ \text{for} \ \ v\in \mathfrak{M}_K.
\end{align*}
By Lemma $\ref{R}$, 
\begin{align*}
\max_{\substack{0 \le l \le u-2 \\ 0\le h \le u-1}} \log\, |\mathfrak{R}_{uN,l,h}(\alpha)|_{v_0}\le -u  \mathbb{A}_{v_0}(\alpha)N+o(N) .
\end{align*}
Using a {{quantitative}} linear independence criterion in \cite[Proposition $5.6$]{DHK2} for 
$$
\theta_l=\dfrac{1}{\alpha^{l+1}} {}_2F_1\left(\frac{1+l}{u},1, \left.\frac{u+l}{u}\right|\frac{1}{\alpha^u}\right) \ \ \text{for} \ \ 0\le l \le u-2,
$$ 
and the invertible matrices $({\rm{M}}_N)_N$, and applying above estimates, we obtain Theorem $\ref{special hypergeometric}$.
\end{proof}

{{\section{Appendix}
Denote the algebraic closure of $\Q$ by $\overline{\Q}$. Let $a(z),b(z)\in \overline{\Q}[z]$ with $w:=\max\{{\rm{deg}}\,a-2,{\rm{deg}}\,b-1\}\ge 0$ and $a(z)\neq 0$. Put $D=-a(z)\partial_z+b(z)$.
The Laurent series $f_0(z),\ldots,f_w(z)$ obtained in Lemma $\ref{solutions}$ for $D$ become $G$-functions in the sense of Siegel when $D$ is a $G$-operator (see \cite[IV]{An1}).
Here we refer below a result due to S.~Fischler and T.~Rivoal \cite{F-R} in which they
gave a condition so that $D$ becomes a $G$-operator.

\begin{lemma} $($\rm{cf. \cite[Proposition $3$ (ii)]{F-R}}$)$
Let $m\ge 2$ be a positive integer, $\alpha_1,\ldots,\alpha_m,\beta_1,\ldots,\beta_{m-1}, \gamma\in \overline{\Q}$ with $\alpha_1,\ldots, \alpha_{m}$ being pairwise distinct.
In the case of $0\in \{\alpha_1,\ldots,\alpha_m\}$, we put $\alpha_m=0$.
Define $a(z)=\prod_{i=1}^m(z-\alpha_i), b(z)=\gamma\prod_{j=1}^{m-1}(z-\beta_j)$ and $D=-a(z)\partial_z+b(z)\in \overline{\Q}[z,\partial_z]$.  
Then the following are equivalent. 

$(i)$ $D$ is a $G$-operator.

$(ii)$ We have 
\begin{align*} 
&\gamma\dfrac{\prod_{j=1}^{m-1}(\alpha_i-\beta_j)}{\prod_{i'\neq i}(\alpha_i-\alpha_{i'})}\in \Q \ \ \ \text{for all} \ \ \ 1\le i \le m \ \ \  \text{if} \ \ 0\notin \{\alpha_1,\ldots,\alpha_m\},\\
&\gamma\dfrac{\prod_{j=1}^{m-1}(\alpha_i-\beta_j)}{\prod_{i'\neq i}(\alpha_i-\alpha_{i'})}\in \Q \ \ \ \text{for all} \ \ \ 1\le i \le m \ \ \text{and} \ \ \gamma \prod_{j=1}^{m-1}\dfrac{\beta_j}{\alpha_j}\in \Q  \ \ \ \text{otherwise}. 
\end{align*}
\end{lemma}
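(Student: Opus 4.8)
The plan is to reduce the statement to the classical description of rank-one $G$-operators in terms of the residues of the logarithmic derivative $b(z)/a(z)$, following \cite{F-R}. First I would solve the homogeneous equation $D\cdot w=0$, that is, $w'(z)/w(z)=b(z)/a(z)$. Since $a(z)=\prod_{i=1}^m(z-\alpha_i)$ has simple roots and $\deg b\le m-1=\deg a-1$, the rational function $b/a$ has no polynomial part and only simple poles, so
\[
\frac{b(z)}{a(z)}=\sum_{i=1}^m\frac{c_i}{z-\alpha_i},\qquad c_i=\frac{b(\alpha_i)}{a'(\alpha_i)}=\gamma\,\frac{\prod_{j=1}^{m-1}(\alpha_i-\beta_j)}{\prod_{i'\neq i}(\alpha_i-\alpha_{i'})}.
\]
A nonzero solution (in a suitable differential extension of $\overline{\Q}(z)$) is then $w(z)=\prod_{i=1}^m(z-\alpha_i)^{c_i}$.

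Because $D=-a(z)\bigl(\partial_z-b(z)/a(z)\bigr)$, the operator $D$ and the monic operator $L:=\partial_z-b(z)/a(z)$ generate the same left ideal of $\overline{\Q}(z)[\partial_z]$, hence define the same differential module; in particular $D$ is a $G$-operator if and only if $L$ is. I would also note that when $0\in\{\alpha_1,\dots,\alpha_m\}$, say $\alpha_m=0$, computing $c_m=b(0)/a'(0)$ rewrites the residue at the origin as $c_m=\gamma\prod_{j=1}^{m-1}\beta_j/\alpha_j$; thus in both cases condition (ii) is precisely the assertion that all the residues $c_1,\dots,c_m$ lie in $\Q$.

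For the implication (ii)$\Rightarrow$(i), suppose every $c_i$ is rational and pick a common denominator $q$. Then $w^q=\prod_{i=1}^m(z-\alpha_i)^{qc_i}\in\overline{\Q}(z)$, so $w$ is algebraic over $\overline{\Q}(z)$, and $L$ is the minimal monic annihilator in $\overline{\Q}(z)[\partial_z]$ of this nonzero algebraic function. Since an algebraic function over $\overline{\Q}(z)$ is a $G$-function and the minimal operator of a $G$-function is a $G$-operator (see \cite[IV]{An1}), $L$, and hence $D$, is a $G$-operator. For the implication (i)$\Rightarrow$(ii), I would use that a $G$-operator is globally nilpotent, and therefore Fuchsian with rational exponents at every point of $\mathbb{P}_1(\overline{\Q})$ (\cite[III, IV]{An1}). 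The operator $L$ is regular singular at each $\alpha_i$ with indicial polynomial $X-c_i$, so $c_i\in\Q$ for all $i$, which is exactly condition (ii). The remaining singular point $z=\infty$ has exponent $-\sum_i c_i$, automatically rational once the $c_i$ are, so it imposes nothing new; combining the two implications yields the equivalence.

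The main obstacle is not computational: the partial fraction expansion and the identification of the residues are routine. The real content is the correct invocation of the external theory of $G$-operators --- namely that being a $G$-operator is preserved on passing from $D$ to its primitive part $L$, that algebraic functions are $G$-functions with $G$-operators as minimal annihilators, and that $G$-operators are Fuchsian with rational local exponents --- together with the small amount of bookkeeping in the case $\alpha_m=0$, where the residue at the origin must be matched with the quantity $\gamma\prod_{j=1}^{m-1}\beta_j/\alpha_j$ appearing in the second line of (ii).
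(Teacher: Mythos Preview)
The paper does not supply its own proof of this lemma: it is stated in the Appendix with a direct reference to \cite[Proposition~3~(ii)]{F-R} and left at that. Your argument is the standard one and is correct. The partial-fraction computation identifies the residues $c_i$ of $b/a$, and you correctly observe that condition~(ii) says precisely $c_i\in\Q$ for all $i$ (including, when $\alpha_m=0$, the rewriting $c_m=\gamma\prod_j\beta_j/\alpha_j$, so that the second clause in the paper's statement is in fact the formula for $c_m$). The equivalence then follows from two classical facts about $G$-operators: Katz's theorem that a $G$-operator is Fuchsian with rational exponents gives (i)$\Rightarrow$(ii), and Eisenstein/Chudnovsky (an algebraic function over $\overline{\Q}(z)$ is a $G$-function, and the minimal operator of a $G$-function is a $G$-operator) gives (ii)$\Rightarrow$(i). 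Your reduction from $D$ to $L=\partial_z-b/a$ via the unit $-a(z)\in\overline{\Q}(z)^{\times}$ is the right way to normalise. This is essentially the proof in \cite{F-R}; there is nothing to add.
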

}}

\noindent
{\bf Acknowledgements.}

The author is grateful to Professors Daniel Bertrand and Sinnou David for their helpful suggestions.
The author deeply thanks Professor Noriko Hirata-Kohno for her enlightening comments on a preliminary version.
{{The author also deeply thanks the anonymous referee for the careful reading and the helpful comments on earlier version of the article that helped us to improve it in various aspects.}}
This work is partly supported by the Research Institute for Mathematical Sciences, an international joint usage
and research center located in Kyoto University.

\bibliography{}

\begin{thebibliography}{99}%

\bibitem{A-R}
K.~Alladi and M.~L.~Robinson,
\textit{Legendre polynomials and irrationality},
J. Reine Angew Math. \textbf{318} (1980), 137--155.

{{\bibitem{An1}
Y.~Andr\'{e},
\textit{$G$-functions and geometry},
Aspects of Mathematics, E13. Friedr. Vieweg \& Sohn, Braunschweig, 1989.}}  

\bibitem{Spe-func}
G.~E.~Andrews, R.~Askey and R.~Roy,
{\it Special Functions},
Encyclopedia of Mathematics and its Applications, 71. Cambridge University Press, Cambridge, 1999.

\bibitem{A-B-V}
A.~I.~Aptekarev, A.~Branquinho and W.~Van Assche,
\textit{Multiple orthogonal polynomials for classical weights},
Trans. Amer. Math. Soc. \textbf{355}. 10 (2003), 3887--3914.

\bibitem{A} 
R.~Askey,
\textit{The $1839$ Paper on Permutations\,$:$ Its Relation to the Rodrigues Formula and Further Developments. 
Mathematics and social utopias in France\,$:$ Olinde Rodrigues and his times/ Simon Altmann, Eduardo L.~Ortiz, editors $($History of mathematics\,$;$ v.\textbf{$28$}$)$},
Providence, R.I. :American Math. Soc. (2005).


\bibitem{B1}
F.~Beukers, 
\textit{A note on the irrationality of $\zeta(2)$ and $\zeta(3)$},
Bull. London Math. Soc. \textbf{11} (1979), 268--272.

\bibitem{B2}
F.~Beukers, 
\textit{Pad\'{e} approximations in number theory},
Pad\'{e} Approximation and its Applications, Amsterdam 1980 
Proc. Conf., Amsterdam, 29-31 October 1980, Lecture Notes in Mathematics, 888 (eds. M. G. de Bruin and H. van Rossum) (Springer, Berlin–Heidelberg–New York, 1981), 90--99.

\bibitem{B3}
F.~Beukers, 
\textit{Legendre polynomials in irrationality proofs},
Bull. of the Australian Math. Soc. \textbf{22} (03), 431--438.

\bibitem{B}
F.~Beukers,
\textit{Irrationality of some $p$-adic $L$-values},
Acta Math.\ Sin. \textbf{24},\ no.\ 4, (2008), 663--686.


\bibitem{DHK1}
S.~David, N.~Hirata-Kohno and M.~Kawashima,
\textit{ Can  polylogarithms at algebraic points be linearly independent $?$},
Moscow Journal in Combinatorics and Number Theory, \textbf{9} (2020), 389--406.

\bibitem{DHK2}
S.~David, N.~Hirata-Kohno and M.~Kawashima,
\textit{ Linear Forms in Polylogarithms},
Ann. Sc. Norm. Super. Pisa Cl. Sci. (5) Vol. XXIII (2022), 1447-1490

\bibitem{DHK3}
S.~David, N.~Hirata-Kohno and M.~Kawashima,
\textit{Linear independence criteria for generalized polylogarithms with distinct shifts}, 
Acta Arithmetica \textbf{206} (2022), 127-169.

\bibitem{DHK4}
S.~David, N.~Hirata-Kohno and M.~Kawashima,
\textit{Generalized hypergeometric $G$-functions take linear independent values},
preprint, available at https://arxiv.org/abs/2203.00207~.

\bibitem{DRR}
E.~Delaygue, T.~Rivoal and J.~Roques,  
\textit{On Dwork's p-adic formal congruences theorem and hypergeometric mirror maps},
Mem. Amer. Math. Soc. \textbf{246}, no. 1163 (2017).



\bibitem{F-R}
S.~Fischler and T.~Rivoal,
{\it A note on $G$-operators of order $2$},
Colloq. Math. \textbf{170}.2 (2022), 321--340.


\bibitem{Bessel}
E.~Grosswald,
{\it Bessel Polynomials},
Lecture Notes in Mathematics, 698. Springer, Berlin, 1978.


\bibitem{H1}
M.~Hata,
\textit{Legendre type polynomials and irrationality measures},
J. Reine Angew. Math. \textbf{404} (1990), 99--125.

\bibitem{H2}
M.~Hata,
\textit{On the linear independence of the values of polylogarithmic functions},
J. Math. Pures et Appl.,  \textbf{69} (1990),  133--173.

\bibitem{H3}
M.~Hata,
\textit{Rational approximations to the dilogarithms},
Trans. Amer. Math. Soc., \textbf{336} (1993), 363--387.


\bibitem{KP}
M.~Kawashima and A.~Po\"{e}ls,
\textit{ Pad\'{e} approximation for a class of hypergeometric functions and parametric geometry of numbers},
J. of Number Theory, \textbf{243}, (2023), 646--687.


\bibitem{M}
R.~Marcovecchio,
\textit{Multiple Legendre polynomials in diophantine approximation},
International Journal of Number Theory, \textbf{10} (2014), 1829--1855.

\bibitem{Ma-Zu}
T.~Matala-aho and W.~Zudilin,
\textit{Euler's factorial series and global relations},
J.  of Number Theory, \textbf{186} (2018), 202--210.

{{\bibitem{N-S}
E.~M.~Niki\v{s}in and V.~N.~Sorokin,
{\it Rational Approximations and Orthogonality},
American Math. Soc., Translations of Mathematical Monographs, \textbf{95} (1991).}}

\bibitem{Pade1}  
H.~Pad\'e,
\textit{ Sur la repr\'esentation approch\'ee d'une fonction par des fractions rationnelles},
Ann. Sci. \'Ecole Norm. Sup. \textbf{9} (1892), 3--93.

\bibitem{Pade2}
H.~Pad\'e, 
\textit{M\'emoire sur les d\'eveloppements en fractions continues de la fonction exponentielle, pouvant servir d'introduction \`a la th\'eorie des fractions continues alg\'ebriques}, 
Ann. Sci. \'Ecole Norm. Sup. \textbf{16} (1899), 395--426. 

\bibitem{R}
R.~Rasala,
\textit{The Rodrigues Formula and Polynomial Differential Operators},
J.  of Mathematical analysis and applications, \textbf{84} (1981), 443--482.

\bibitem{R-T}
G.~Rhin and P.~Toffin,
\textit{ Approximants de Pad\'{e} simultan\'{e}s de logarithmes},
J. Number Theory, \textbf{24},  (1986), 284--297.


\bibitem{Rivoal}
T.~Rivoal,
\textit{Simultaneous Pad\'{e} approximants to the Euler, exponential and logarithmic functions},
J. Th\'{e}orie des Nombres de Bordeaux \textbf{27},  (2015), 565--589.


\bibitem{Sep}
L.~Sepp$\ddot{\text{a}}$l$\ddot{\text{a}}$,
\textit{Euler's factorial series at algebraic integer points},
J.  of Number Theory, \textbf{206} (2020), 250--281.


\bibitem{Siegel} 
C.~Siegel,
\textit{ \"Uber einige Anwendungen diophantischer Approximationen}, 
Abhandlungen der Preu\ss{}ischen Akademie der Wissenschaften.
Physikalisch-Mathematische Kl. \textbf{1929--30}, no.1, 1--70.


\bibitem{S1}
V.~N.~Sorokin,
\textit{Simultaneous Pad\'{e} approximants of functions of Stieltjes type}, 
Sib. Math. J. \textbf{31}(5) (1990), 809--817. 

\bibitem{S2}
V.~N.~Sorokin,
\textit{Simultaneous Pad\'{e} approximants of functions of Stieltjes type},
Sib Math J. \textbf{31} (1990) Issue 5, 809--817.

\bibitem{S3}
V.~N.~Sorokin,
\textit{Joint approximations of the square root, logarithm, and arcsine},
(Russian) Vestnik Moskov. Univ. Ser. I Mat. Mekh. 2009, no. 2, 65--69 ; translation in Moscow Univ. Math. Bull. 64.2 (2009), 80--83.

\bibitem{Va1}
K.~V$\ddot{\text{a}}$$\ddot{\text{a}}$n$\ddot{\text{a}}$nen,
\textit{On Pad\'{e} approximations and global relations of some Euler-type series},
International Journal of Number Theory, \textbf{14} (2018), 2303--2315.

\bibitem{Va3}
K.~V$\ddot{\text{a}}$$\ddot{\text{a}}$n$\ddot{\text{a}}$nen,
\textit{On a result of Fel'dman on linear forms in the values of some $E$-functions},
Ramanujan J., \textbf{48} (2019) 33--46.
\end{thebibliography}

\medskip\vglue5pt
\vskip 0pt plus 1fill
\hbox{\vbox{\hbox{Makoto \textsc{Kawashima}}
\hbox{{\tt kawashima.makoto@nihon-u.ac.jp}}
\hbox{Department of Liberal Arts and Basic Sciences}
\hbox{College of Industrial Engineering}
\hbox{Nihon University}
\hbox{Izumi-chou, Narashino, Chiba}
\hbox{275-8575, Japan}
}}
\end{document}